\DeclareMathAlphabet{\pazocal}{OMS}{zplm}{m}{n}
\let\oldReturn\Return
\renewcommand{\Return}{\State\oldReturn}
\pgfplotsset{compat=1.5}
\newtheorem{theorem}{Theorem}[section]
\newtheorem{lemma}[theorem]{Lemma}
\newtheorem{assumption}[theorem]{Assumption}
\newtheorem{defn}[theorem]{Definition}
\newtheorem{cor}[theorem]{Corollary}
\newtheorem{remark}[theorem]{Remark}
\newtheorem{prop}[theorem]{Proposition}
\newtheorem{prob}[theorem]{Problem}
\newcommand{\A}{\pazocal{A}}
\renewcommand{\S}{\pazocal{S}}
\newcommand\norm[1]{\left\lVert#1\right\rVert}
\renewcommand{\P}{\pazocal{P}}
\newcommand{\T}{\pazocal{T}}
\newcommand{\D}{\pazocal{D}}
\newcommand{\En}{\pazocal{E}}
\newcommand{\C}{\pazocal{C}}
\newcommand{\bs}{{\bar{s}}}
\newcommand{\G}{{\pazocal{G}}}
\newcommand{\sG}{{\scriptscriptstyle{\G}}}
\newcommand{\sA}{{\scriptscriptstyle{A}}}
\newcommand{\sD}{{\scriptscriptstyle{D}}}
\newcommand{\V}{V_\sG}
\newcommand{\PA}{\P_{\sA}}
\newcommand{\PD}{\P_{\sD}}
\renewcommand{\AA}{\A_{\sA}}
\newcommand{\AD}{\A_{\sD}}
\newcommand{\ppD}{\mathbf{P}_{\sD}}
\newcommand{\pD}{\pi_{\sD}}
\newcommand{\pA}{\pi_{\sA}}
\newcommand{\rA}{r_{\sA}}
\newcommand{\rD}{r_{\sD}}
\renewcommand{\t}{\tau}
\numberwithin{theorem}{section}
\newcommand\sbullet[1][.5]{\mathbin{\vcenter{\hbox{\scalebox{#1}{$\bullet$}}}}}
\newcommand{\remove}[1]{}
\def \*{\star}
\def \10n{\!\!\!\!\!\!\!\!\!\!}
\def\BibTeX{{\rm B\kern-.05em{\sc i\kern-.025em b}\kern-.08em
		T\kern-.1667em\lower.7ex\hbox{E}\kern-.125emX}}
\begin{document}
\title{\LARGE \bf A Reinforcement Learning  Approach for Dynamic Information Flow Tracking Games for Detecting Advanced Persistent Threats}
\author{Dinuka Sahabandu, Shana Moothedath,\IEEEmembership{~Member, IEEE,} Joey Allen, \\ Linda Bushnell,\IEEEmembership{~Fellow, IEEE,} Wenke Lee,\IEEEmembership{~Senior Member, IEEE,} and Radha Poovendran,\IEEEmembership{~Fellow, IEEE}
 \thanks{D. Sahabandu, S. Moothedath, L. Bushnell, and R. Poovendran are with the Department of Electrical and Computer Engineering, University of Washington, Seattle, WA 98195, USA. \texttt{\{sdinuka, sm15, lb2, rp3\}@uw.edu}.}
 \thanks{J. Allen and W. Lee are with the College of Computing, Georgia Institute of Technology, Atlanta, GA 30332 USA.
\texttt{jallen309@gatech.edu, wenke@cc.gatech.edu}.}
 }

\maketitle
\thispagestyle{empty}
\pagestyle{empty}
\begin{abstract}
Advanced Persistent Threats (APTs) are stealthy, sophisticated, and long-term attacks that threaten the security and privacy of sensitive information. Interactions of APTs with victim system introduce information flows that are recorded in the system logs. Dynamic Information Flow Tracking (DIFT) is a promising detection mechanism for detecting APTs. DIFT taints information flows originating at system entities that are susceptible to an attack, tracks the propagation of the tainted flows, and authenticates the tainted flows at certain system components according to a pre-defined security policy. Deployment of DIFT to defend against APTs in cyber systems is limited by the heavy resource and performance overhead associated with DIFT. Effectiveness of detection by DIFT depends on the false-positives and false-negatives generated due to inadequacy of DIFT's pre-defined security policies to detect stealthy behavior of APTs. In this paper, we propose a resource efficient model for DIFT by incorporating the security costs, false-positives, and false-negatives associated with DIFT. Specifically, we develop a game-theoretic framework and provide an analytical model of DIFT that enables the study of trade-off between resource efficiency and the effectiveness of detection. 
Our game model is a nonzero-sum, infinite-horizon, average reward stochastic game. Our model incorporates the information asymmetry between players that arises from DIFT's inability to distinguish malicious flows from benign flows and APT's inability to know the locations where DIFT performs a security analysis. Additionally, the game has incomplete information as the transition probabilities (false-positive and false-negative rates) are unknown.
We propose a multiple-time scale stochastic approximation algorithm to learn an equilibrium solution of the game. We prove that our algorithm converges to an average reward Nash equilibrium. We evaluated our proposed model and algorithm on a real-world ransomware dataset and validated the effectiveness of the proposed approach.
\end{abstract}

\begin{IEEEkeywords}
Advanced Persistent Threats (APTs), Dynamic Information Flow Tracking (DIFT), Stochastic games, Average reward Nash equilibrium, Reinforcement learning
\end{IEEEkeywords}
\IEEEpeerreviewmaketitle
\section{Introduction}

Advanced Persistent Threats (APTs) are emerging class of cyber threats that victimize governments and organizations around the world through cyber espionage and sensitive information hijacking \cite{jang2014survey, watkins2014impact}. Unlike ordinary cyber threats (e.g., malware, trojans) that execute quick damaging attacks, APTs employ sophisticated and stealthy attack strategies that enable unauthorized operation in the victim system over a prolonged period of time \cite{ussath2016advanced}. End goal of an APT typically aims to sabotage critical infrastructures (e.g., Stuxnet \cite{FalMurChi:11}) or exfiltrate sensitive information (e.g., Operation Aurora, Duqu, Flame, and Red October \cite{BenPekButFel:12}).  APTs follow a multi-stage stealthy attack approach to achieve the goal of the attack. Each stage of an APT is customized to exploit set of vulnerabilities in the victim to achieve a set of sub-goals (e.g., stealing user credentials, network reconnaissance) that will eventually lead to the end goal of the attack \cite{yadav2015technical}. The stealthy, sophisticated and strategic nature of APTs make detecting and mitigating them challenging using conventional security mechanisms such as firewalls, anti-virus softwares, and intrusion detection systems that  heavily rely on the signatures of malware or anomalies observed in the benign behavior of the system.  

Although APTs operate in stealthy manner without inducing any suspicious abrupt changes in the victim system, the interactions of APTs with the system  introduce information flows in the victim system. Information flows consist of data and control commands that dictate how data is propagated between different system entities (e.g., instances of a computer program, files, network sockets) \cite{NewSon-05,ClaLiOrs:07}. Dynamic Information Flow Tracking (DIFT) is a mechanism developed to dynamically track the usage of information flows during program executions \cite{NewSon-05,SuhLeeZhaDev-04}.  Operation of DIFT is based on three core steps. i)~Taint (tag) all the information flows that originate from the set of system entities susceptible to cyber threats \cite{NewSon-05}, \cite{SuhLeeZhaDev-04}. ii)~Propagate the tags into the output information flows based on a set of predefined tag propagation rules which track the mixing of tagged flows with untagged flows at the different system entities. iii)~Verify the authenticity of the tagged flows by performing a security analysis at a subset of system entities using a set of pre-specified tag check rules. When a tagged (suspicious) information flow is verified as malicious through a security check, DIFT makes use of the tags of the malicious information flow to identify victimized system entities of the attack and reset or delete them to protect the system. Since information flows capture the footprint of APTs in the victim system and DIFT allows tracking and inspection of information flows,  DIFT has been recently employed as a  defense mechanism against APTs  \cite{BroTon-16},  \cite{EncGilHanTen-14}. 

Tagging and tracking information flows in a system using DIFT adds additional resource costs to the underlying system in terms of memory and storage. In addition, inspecting information flows demands extra processing power from the system. Since APTs maintain the characteristics of their malicious information flows (e.g., data rate, spatio-temporal patterns of the control commands) close to the characteristics of benign information flows \cite{wagner2002mimicry} to avoid detection, pre-defined security check rules of DIFT can miss the detection of APTs (false-negatives) or raise false alarms by identifying benign flows as malicious flows (false-positives). Typically, the number of benign information flows exceeds the number of malicious information flows in a system by a large factor. As a consequence, DIFT incurs a tremendous resource and performance overhead to the underlying system due to frequent security checks and false-positives. The high cost and performance degradation of DIFT can be  worse in large scale systems such as servers used in the data centers \cite{EncGilHanTen-14}. 

There has been software-based design approaches to reduce the resource and performance cost of DIFT \cite{SuhLeeZhaDev-04, JiLeeDowWanFazKimOrsLee-17}. However,  widespread deployment of DIFT across various cyber systems and platforms is heavily constrained by the added resource and performance costs that are inherent to DIFT's implementation and due to false-positives and false-negatives generated by DIFT \cite{jee2013shadowreplica},  \cite{nightingale2008parallelizing}.  An analytical model of DIFT need to capture the system level interactions between DIFT and APTs, and cost of resources and performance overhead due to security checks. Additionally, false-positives and false-negatives generated by DIFT also need to be considered while deploying DIFT to detect APTs.



In this paper we consider a computer system equipped with DIFT that is susceptible to an attack by APT and provide a game-theoretic model that enables the study of trade-off between resource efficiency and effectiveness of detection of DIFT. Strategic interactions of an APT to achieve the malicious objective while evading detection depends on the effectiveness of the DIFT's defense policy. On the other hand, determining a resource-efficient policy for DIFT that maximize the detection probability depends on the nature of APT's interactions with the system. Non-cooperative game theory provides a rich set of rules that can model  the strategic interactions between two competing agents (DIFT and APT). 
The contributions of this paper are the following.
\begin{enumerate}
	\item[$\bullet$] We model the long-term, stealthy, strategic interactions between DIFT and APT as a two-player, nonzero-sum, average reward, infinite-horizon {\em stochastic  game}. The proposed game model captures the resource costs associated with DIFT in performing security analysis as well as the false-positives and false-negatives of DIFT.  
	\item[$\bullet$] We provide, a reinforcement learning-based algorithm, RL-ARNE, that learns an average reward Nash Equilibrium of the game between DIFT and APT. RL-ARNE is a multiple-time scale algorithm that extends to $K$-player, non-zero sum, average reward, unichain stochastic games.
	\item[$\bullet$] We prove the convergence of RL-ARNE algorithm to an average reward Nash equilibrium of the game. 
	\item[$\bullet$] We evaluate the performance of our approach via an experimental analysis on ransomware attack data obtained from Refinable Attack INvestigation (RAIN) \cite{JiLeeDowWanFazKimOrsLee-17}. 
\end{enumerate}

\subsection{Related Work}\label{sec:rel}

Stochastic games introduced by Shapley generalize the Markov decision processes to model the strategic interactions between two or more players that occur in a sequence of stages \cite{shapley1953stochastic}. 
Dynamic nature of stochastic games enables the modeling of competitive market scenarios in economics \cite{Ami-03}, competition within and between species for resources in evolutionary biology \cite{foster1990stochastic}, resilience of cyber-physical systems in engineering \cite{ZhuBas-11}, and secure networks under adversarial interventions in the field of computer/network science \cite{LyeWin-05}. 
 
 Study of stochastic games is often focused on finding a set of \emph{ Nash Equilibrium} (NE) \cite{nash1950equilibrium} policies for the players such that no player is able to increase their respective payoffs by unilaterally deviating from their NE policies. The payoffs of a stochastic game are usually evaluated under discounted or limiting average payoff criteria \cite{FilVri-12,sobel1971noncooperative}. Discounted payoff criteria, where future rewards of the players are scaled down by a factor between zero and one, is widely used in analyzing stochastic games as an NE is guaranteed to exist for any discounted stochastic game \cite{mertens2003equilibria}. Limiting average payoff criteria considers the time-average of the rewards received by the players during the game \cite{sobel1971noncooperative}. The existence of an NE under limiting average payoff criteria for a general stochastic game is an open problem.  When an NE exists, value iteration, policy iteration, and linear/nonlinear programming based approaches are proposed in the literature to find an NE \cite{FilVri-12,raghavan1991algorithms}. These approaches, however, require the knowledge of transition structure and the reward structure of the game. Also, these solution approaches are only guaranteed to find an exact NE only in special classes of stochastic games, such as zero-sum stochastic games, where rewards of the players sum up to zero in all the game states \cite{FilVri-12}.

Multi-agent reinforcement learning (MARL) algorithms are proposed in the literature to obtain NE policies of stochastic games when the  transition probabilities of the game and reward structure of the players are unknown. In \cite{bowling2001rational} authors introduced two properties, {\em rationality} and {\em convergence}, that are necessary for a learning agent to learn a discounted NE in MARL setting and proposed a WOLF-policy hill climbing algorithm which is empirically shown to converge to an NE. Q-learning based algorithms are proposed to compute an NE in discounted stochastic games \cite{hu2003nash}  and average reward stochastic games \cite{li2007reinforcement}. Although the convergence of these approaches are guaranteed in the case of zero-sum games,  convergence in nonzero-sum games require more restrictive assumptions on the game, such as existence of an unique NE \cite{hu2003nash}. Recently, a two-time scale algorithm to compute an NE of a nonzero-sum discounted stochastic game was proposed in  \cite{prasad2015two} where authors showed the convergence of algorithm to an exact NE of the game. However, designing reinforcement learning (RL)-based algorithms with provable convergence guarantee for computing average reward Nash-equilibrium in non-zero sum, stochastic games remains an open problem.



Various game-theoretic models including  deterministic, stochastic, and limited-information security games have been studied to model the interaction between malicious attackers and defenders of networked systems in \cite{alpcan2010network}. Stochastic games have been  used to analyze security of computer networks in the presence of malicious attackers \cite{AlpBas-06}, \cite{NguAlpBas-09}. 
In \cite{panfili2018game}, authors modeled an attacker/defender problem as a
multi-agent non-zero sum game and proposed a RL algorithm (friend or foe Q-learning) to solve the game. Adversarial multi-armed bandit and Q-learning algorithms were combined to solve a spatial attacker/defender discounted Stackelberg  game in  \cite{klima2016markov}.  
Game-theoretic frameworks were proposed in the literature to model interaction of APTs with the system through a deceptive APT in \cite{huang2019adaptive} and a mimicry attack in \cite{sayin2018game}.

Our prior works used game theory to model  the interaction of APTs and a DIFT-based detection system \cite{MooShaAllVClaBushWenPoo-18_arx,  MooSahClaLeePoo-18, SahXiaClaLeePoo-18, SahMooAllClaLeePoo-19, moothedath2020StochasticDIFT, misra2019learning,
sahabandu2019stochasticGameSec}.
The game models in  \cite{MooShaAllVClaBushWenPoo-18_arx, MooSahClaLeePoo-18, SahXiaClaLeePoo-18} are  non-stochastic as the notions of false alarms and false-negatives are not considered. Recently, a stochastic model  of DIFT-games was proposed in \cite{SahMooAllClaLeePoo-19, moothedath2020StochasticDIFT} when the transition probabilities of the game are  known.  However, the transition probabilities, which are the rates of generation of false alarms and false negatives at the different system components, are often unknown. In \cite{misra2019learning}, the case of unknown transition probabilities was analyzed  and empirical results to compute approximate equilibrium policies was presented. In the conference version of this paper \cite{sahabandu2019stochasticGameSec} we considered discounted DIFT-game with unknown transition probabilities and proposed a two-time scale RL algorithm that converges to NE of the discounted game.   


\subsection{Organization of the Paper}
Section~\ref{sec:DefResults} presents the formal definitions and existing results. Section~\ref{sec:prelim} provides system and defender models. Section~\ref{sec:Game} formulates the stochastic game between DIFT and APT. Section~\ref{sec:Equilibrium} analyzes the necessary and sufficient conditions required to characterize the equilibrium of DIFT-APT game.  Section~\ref{sec:Algorithm} presents a RL based algorithm to compute an equilibrium of DIFT-APT game. Section~\ref{sec:Simulations} provides an experimental study of the proposed algorithm on a real-world attack dataset.  Section~\ref{sec:Conclusions} presents the conclusions.

\section{Formal Definitions and Existing Results}\label{sec:DefResults}
\subsection{Stochastic Games}
A stochastic game $\mathbb{G}$ is defined as a tuple $<K, \mathbb{S}, \A, \mathbb{ P}, r>$, where $K$ denotes the number of players, $\mathbb{S}$ represents the state space, $\A := \A_{1} \times \ldots, \times  \A_{K}$ denotes the action space, $\mathbb{ P}$ designates the transition probability kernel, and $r$ represents the reward functions. Here $\mathbb{S}$  and $\A $ are finite spaces. Let $\A_{k} := \cup_{s \in \mathbb{ S}}\A_{k}(s)$ be the action space of the game corresponding to each player $k \in \{1, \ldots, K\}$, where $\A_{k}(s)$ denotes the set of actions allowed for player $k$ at state $s \in \mathbb{ S}$. Let $\boldsymbol{\pi}_{k}$ be the set of stationary policies corresponding to player $k \in \{1, \ldots, K\}$ in $\mathbb{G}$. Then a policy $\pi_{k} \in \boldsymbol{\pi}_{k}$ is said to be a deterministic stationary policy if  $\pi_{k} \in \{0, 1\}^{|\A_{k}|}$ and said to be a stochastic stationary policy if $\pi_{k} \in [0, 1]^{|\A_{k}|}$. Let $\mathbb{P}(s'|s, a_{1}, \ldots, a_{K})$ be the probability of transitioning from state $s \in  \mathbb{ S}$ to a state $s' \in  \mathbb{ S}$ under set of actions $(a_{1}, \ldots, a_{K})$, where $a_{k} \in \A_{k}(s)$ denotes the action chosen by player $k$ at the state $s$. Further let $r_{k}(s,a_{1}, \ldots, a_{K},s')$ be the reward received by the player $k$ when state of the game transitions from states $s$ to $s'$ under set of actions $(a_{1}, \ldots, a_{K})$ of the players at state $s$. 


\subsection{Average Reward Payoff Structure}\label{subsec:ARNE}

Let $\pi = ({\pi}_{1}, \ldots, {\pi}_{K})$. Then define $\rho_{k}(s, \pi)$ to be the average reward payoff of player $k$ when the game starts at an arbitrary state $s \in \mathbb{ S}$ and the players follow their respective policies $\pi$. Let $s^t$  and $a_k^t$ be the state of game at time $t$ and the action of player $k$ at time $t$, respectively. Then $\rho_{k}(s, \pi)$ is defined as 
\begin{equation}\label{eq:average_reward}
\rho_{k}(s, \pi) = \liminf\limits_{T \rightarrow \infty}\frac{1}{T+1}\sum\limits_{t = 0}^{T}\mathbb{E}_{s,\pi}[r_{k}(s^t, a_1^t, \ldots, a_K^t)], 
\end{equation}
where the term $\mathbb{E}_{s,\pi}[r_{k}(s_t, a_1^t, \ldots, a_K^t)]$ denotes the expected reward at time $t$ when the game starts from a state $s$ and the players draw a set of actions $ (a_1^t, \ldots, a_K^t)$ at current state $s^t$ based on their respective policies from $\pi$. All the players in $\mathbb{G}$ aim to maximize their individual payoff values in Eqn.~\eqref{eq:average_reward}.

Let $-k$ be the opponents of a player $k \in \{1, \ldots, K\}$ (i.e., $-k := \{1, \ldots, K\} \backslash k$). Then let ${\pi}_{-k} := \{{\pi}_{1}, \ldots, {\pi}_{K}\} \setminus {\pi}_{k}$ denotes a set of stationary policies of the opponents of player $k$. Equilibrium of $\mathbb{G}$ under average reward criteria is given below.

\begin{defn}[ARNE]\label{def:Nash}
	A set of stationary policies $\pi^{*} = (\pi_{1}^{*}, \ldots, \pi_{K}^{*})$ forms an Average Reward Nash Equilibrium (ARNE) of $\mathbb{G}$ if and only if
	$\rho_{k}(s, \pi_{k}^{*}, \pi_{-k}^{*}) \geq \rho_{k}(s, \pi_{k}, \pi_{-k}^{*}),$ for all $s \in {\bf S}, \pi_{k} \in \boldsymbol{\pi}_{k}$ and $k \in \{1, \ldots, K\} $.
\end{defn}

A policy  $\pi^{*} = (\pi_{1}^{*}, \ldots, \pi_{K}^{*})$ is referred to as an ARNE policy of $\mathbb{G}$. When all the players follow  ARNE policy, no player $k$ is able to increase its payoff value by unilaterally deviating from its respective ARNE policy $\pi_{k}^{*}$.

\subsection{Unichain Stochastic Games}

Define ${\mathbb{P}}(\pi)$ to be the transition probability structure of $\mathbb{G}$ induced by a set of deterministic player policies $\pi$. Note that ${\mathbb{P}}(\pi)$ is a Markov chain formed in the state space $\mathbb{ S}$. Assumption~\ref{assp:ARNE_asmp} presents a condition on ${\mathbb{P}}(\pi)$.

\begin{assumption}\label{assp:ARNE_asmp}
	Induced Markov chain (MC) ${\mathbb{P}}(\pi)$ corresponding to every deterministic stationary policy set $\pi$ contains exactly one recurrent class of states.
\end{assumption}

Assumption~\ref{assp:ARNE_asmp} imposes a structural constraint on the MC induced by \emph{deterministic} stationary policy set.  Here, the single recurrent class need not necessarily contain all $s \in \mathbb{ S}$. There may exist some transient states in ${\mathbb{P}}(\pi)$. Also note that any $\mathbb{G}$ that satisfies Assumption~\ref{assp:ARNE_asmp} can have multiple recurrent classes in ${\mathbb{P}}(\pi)$ under some \emph{stochastic} stationary policy set $\pi$.  Stochastic games that satisfy Assumption~\ref{assp:ARNE_asmp} are referred to as \emph{unichain }stochastic games. In a special case where the recurrent class contains all the states in the state space, $\mathbb{G}$ is referred  as an\emph{ irreducible} stochastic game \cite{FilVri-12}. 

Let ${\mathbb{R}}_{l}$ and ${\mathbb{T}}$ denote a set of states in the $l^{\text{th}}$ recurrent class of  the induced MC ${\mathbb{P}}(\pi)$ for $l \in \{1, \ldots, L\}$, and a set of transient states in ${\mathbb{P}}(\pi)$, respectively,  where $L$ denote the number of recurrent classes. 
Proposition~\ref{prop:Average_reward_results} gives results on the average reward values of the states in each ${\mathbb{R}}_{l}$ and ${\mathbb{T}}$.   

\begin{prop}[\cite{FilVri-12}, Section~3.2]\label{prop:Average_reward_results}
	The following statements are true for any induced MC ${\mathbb{P}}(\pi)$ of $\mathbb{G}$. 
	\begin{enumerate}
		\item For $l \in \{1, \ldots, L\}$ and for all $s \in {\mathbb{R}}_{l}$, ${\rho}_{k}(s, \pi) = {\rho}^{l}_{k}$, where each ${\rho}^{l}_{k}$ denotes a real-valued constant.
		\item ${\rho}_{k}(s, \pi) = \sum\limits_{l = 1}^{L}q_{l}(s){\rho}^{l}_{k}$, if $s \in {\mathbb{T}}$, where $q_{l}(s)$ is the probability of reaching a state in $l^{\text{th}}$ recurrent class from $s$.
	\end{enumerate}
\end{prop}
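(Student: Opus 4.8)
The plan is to fix a deterministic stationary policy $\pi$ and exploit the fact that, once $\pi$ is fixed, each player's action is a function of the current state, so the game collapses to a Markov reward process on $\mathbb{S}$. Write $P := \mathbb{P}(\pi)$ for the induced transition matrix and let $\bar r_k(s)$ denote the expected one-step reward accrued by player $k$ from state $s$ under the deterministic actions $\pi(s)$. With this notation the expected reward at time $t$ when the chain starts from $s$ is exactly $(P^t \bar r_k)(s)$, so the payoff in \eqref{eq:average_reward} becomes $\rho_k(s,\pi) = \liminf_T \tfrac{1}{T+1}\sum_{t=0}^T (P^t \bar r_k)(s)$. Everything then reduces to analyzing this Cesàro average.

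First I would recall the classical fact that for any finite stochastic matrix $P$ the Cesàro limit $P^* := \lim_{T\to\infty}\tfrac{1}{T+1}\sum_{t=0}^T P^t$ exists. This upgrades the $\liminf$ to a genuine limit and gives the compact vector identity $\rho_k(\cdot,\pi) = P^* \bar r_k$. Thus both claims are statements about the row structure of the limiting matrix $P^*$.

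Next I would invoke the canonical decomposition of the finite chain $P$ into its recurrent classes $\mathbb{R}_1,\ldots,\mathbb{R}_L$ and its transient part $\mathbb{T}$. The two structural facts I need are: (i) each recurrent class $\mathbb{R}_l$ carries a unique stationary distribution $\nu^l$ supported on $\mathbb{R}_l$, and for every $s \in \mathbb{R}_l$ the $s$-th row of $P^*$ equals $\nu^l$; and (ii) for a transient state $s$ the $s$-th row of $P^*$ is the convex combination $\sum_{l=1}^L q_l(s)\,\nu^l$, where $q_l(s)$ is the absorption probability into $\mathbb{R}_l$ from $s$ (these weights sum to one since a finite chain leaves $\mathbb{T}$ with probability one). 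I would derive (i) by restricting $P^*$ to each recurrent class, where it acts as the rank-one stationary projector $\mathbf{1}\,(\nu^l)^\top$ of an irreducible chain, and derive (ii) by a standard absorption/first-step argument on the transient block.

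The two assertions then follow immediately. For Part~1, if $s \in \mathbb{R}_l$ then $\rho_k(s,\pi) = \nu^l \bar r_k$, which does not depend on the particular $s \in \mathbb{R}_l$; naming this common value $\rho_k^l$ proves the claim. For Part~2, if $s \in \mathbb{T}$ then $\rho_k(s,\pi) = \big(\sum_l q_l(s)\,\nu^l\big)\bar r_k = \sum_l q_l(s)\,(\nu^l \bar r_k) = \sum_l q_l(s)\,\rho_k^l$, the desired convex combination. The main obstacle is the third step, namely establishing the block and convex-combination structure of $P^*$; once the existence of the Cesàro limit and the canonical form of a finite Markov chain are in hand, the remainder is bookkeeping. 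Indeed, this is exactly the content of \cite{FilVri-12}, Section~3.2.
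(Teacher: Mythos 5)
The paper offers no proof of this proposition --- it is quoted directly from \cite{FilVri-12}, Section~3.2 --- so there is nothing in-paper to compare against; your argument is the standard one underlying that citation and is correct: pass to the Ces\`aro limit $P^{*}$ of the induced chain, identify its rows on a recurrent class with the unique stationary distribution of that class, and identify its rows on transient states with the absorption-weighted mixture of those distributions. One minor remark: the paper later invokes the proposition for chains induced by \emph{stochastic} stationary policies (e.g., when bounding the average-reward iterates $\rho_{k}^{n}$), and your opening restriction to deterministic $\pi$ is unnecessary, since nothing in the argument uses determinism --- any stationary policy induces a time-homogeneous finite Markov reward process and the same row-structure analysis applies verbatim.
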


$1)$ in Proposition~\ref{prop:Average_reward_results} implies that the average reward payoff of player $k$ takes the same value ${\rho}^{l}_{k}$ for each state in the $l^{\text{th}}$ recurrent class. $2)$ suggests that the average reward payoff of a transient state is a convex combination of the average payoff values corresponding to $L$ recurrent classes ${\rho}^{1}_{k}, \ldots, {\rho}^{L}_{k}$. Proposition~\ref{prop:Average_reward_results} shows that for any $\mathbb{G}$, the average reward payoffs corresponding to each state solely depends on the average reward payoffs of the recurrent classes in ${\mathbb{P}}(\pi)$.

\subsection{ARNE in Unichain Stochastic Games}
Existence of an ARNE for nonzero-sum stochastic games is open. However, the existence of ARNE is shown for some special classes of stochastic games \cite{FilVri-12}. 


\begin{prop}[\cite{sobel1971noncooperative}, Theorem 2]\label{prop:existance_01}
	Consider a stochastic game that satisfies Assumption~\ref{assp:ARNE_asmp}. Then there exists an ARNE for the stochastic game.
\end{prop}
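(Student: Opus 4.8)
The plan is to prove existence by the \emph{vanishing-discount} method, which transfers the question to the discounted game, where equilibria are already known to exist. First I would fix, for each discount factor $\beta \in (0,1)$, a stationary Nash equilibrium $\pi^{\beta} = (\pi_1^{\beta}, \dots, \pi_K^{\beta})$ of the $\beta$-discounted version of $\mathbb{G}$; such a stationary profile is guaranteed to exist for every finite-state, finite-action discounted stochastic game. Writing $v_k^{\beta}(s,\pi)$ for player $k$'s $\beta$-discounted value under a profile $\pi$ started at $s$, the equilibrium property reads $v_k^{\beta}(s,\pi_k^{\beta},\pi_{-k}^{\beta}) \ge v_k^{\beta}(s,\pi_k,\pi_{-k}^{\beta})$ for all $s \in \mathbb{S}$, all $\pi_k \in \boldsymbol{\pi}_k$, and all $k$. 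Since $\prod_k \boldsymbol{\pi}_k$ is a product of probability simplices, hence compact, I would take a sequence $\beta_n \uparrow 1$ and pass to a convergent subsequence, $\pi^{\beta_n} \to \pi^{*}$, with $\pi^{*}$ again a stationary profile; the candidate ARNE is $\pi^{*}$.

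The second step is to bridge the discounted and average criteria, and this is exactly where Assumption~\ref{assp:ARNE_asmp} is indispensable. The classical Abelian--Tauberian identity gives, for every fixed profile $\pi$,
\[
\lim_{\beta \to 1}(1-\beta)\,v_k^{\beta}(s,\pi) = \rho_k(s,\pi),
\]
and, on a recurrent class of the induced chain, the discounted value admits the expansion $v_k^{\beta}(s,\pi) = (1-\beta)^{-1}\rho_k(s,\pi) + h_k(s,\pi) + o(1)$ with a bounded bias term $h_k$. Under the unichain hypothesis the chain induced by any deterministic profile has a single recurrent class, so by Proposition~\ref{prop:Average_reward_results} the gain $\rho_k(s,\pi)$ is constant in $s$ and the bias $h_k(\cdot,\pi)$ is uniformly bounded over the compact policy space. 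Multiplying the discounted equilibrium inequalities by $(1-\beta_n)>0$ and letting $n \to \infty$, the leading terms survive while the $(1-\beta_n)h_k$ contributions vanish, which should yield $\rho_k(s,\pi_k^{*},\pi_{-k}^{*}) \ge \rho_k(s,\pi_k,\pi_{-k}^{*})$ for every $k$, every deviation $\pi_k$, and every $s$, i.e.\ precisely the ARNE inequality of Definition~\ref{def:Nash}.

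The hard part is the limit interchange on the right-hand side: one must show $(1-\beta_n)\,v_k^{\beta_n}(s,\pi_k,\pi_{-k}^{\beta_n}) \to \rho_k(s,\pi_k,\pi_{-k}^{*})$ \emph{jointly} as $\beta_n \to 1$ and $\pi_{-k}^{\beta_n} \to \pi_{-k}^{*}$. This requires continuity (or the correct one-sided semicontinuity) of the average reward in the policy, and here a genuine subtlety surfaces: as the paper notes after Assumption~\ref{assp:ARNE_asmp}, a \emph{stochastic} profile such as $(\pi_k,\pi_{-k}^{*})$ may induce several recurrent classes, so $\rho_k$ is given by the convex, class-weighted formula of Proposition~\ref{prop:Average_reward_results}(2) rather than by a single constant, and the reaching probabilities $q_l(s)$ can jump as the policy varies. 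I expect this to be the main obstacle. I would address it by restricting the deviations $\pi_k$ that need to be checked to \emph{deterministic} stationary policies---legitimate because the average-reward MDP faced by player $k$ against the fixed opponents $\pi_{-k}^{*}$ always admits a deterministic gain-optimal response---and then controlling each recurrent-class term via the decomposition of Proposition~\ref{prop:Average_reward_results} together with the uniform bias bound, thereby establishing the equicontinuity of $(1-\beta)v_k^{\beta}$ near $\beta=1$ that makes the interchange valid.

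The remaining step is a short verification that $\pi^{*}$ satisfies the equilibrium inequality against \emph{all} deviations, which follows since the best-response value in the MDP confronting player $k$ is attained at a deterministic policy; combined with the limiting inequality this certifies $\pi^{*}$ as an ARNE and completes the argument. An alternative route, bypassing the discounted game, is a direct Kakutani fixed-point argument on $\prod_k \boldsymbol{\pi}_k$ in which each player's best-response correspondence is defined through the average-reward gain--bias optimality equations; this encounters the same continuity difficulty, again resolved by the unichain structure guaranteeing a well-defined, state-independent gain for every deterministic profile.
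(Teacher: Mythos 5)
The paper does not actually prove this proposition: it is imported verbatim from Sobel's 1971 paper (Theorem~2 there) and stated without argument, so there is no in-paper proof to compare against. Your vanishing-discount argument is, however, essentially the classical route by which this existence result is established in the cited literature (Sobel, and independently Rogers): fix stationary equilibria $\pi^{\beta}$ of the $\beta$-discounted games, extract a convergent subsequence as $\beta\uparrow 1$, and pass the equilibrium inequalities to the limit after multiplying by $1-\beta$. The skeleton is sound, and you correctly isolate the only delicate step, namely the joint limit $(1-\beta_n)v_k^{\beta_n}(s,\pi_k,\pi_{-k}^{\beta_n})\to\rho_k(s,\pi_k,\pi_{-k}^{*})$, which is where the unichain hypothesis enters through uniform boundedness of the bias and continuity of the stationary distribution over the policy simplex. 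One remark: the multichain scenario you worry about for the deviation profile $(\pi_k,\pi_{-k}^{*})$ cannot actually occur under Assumption~\ref{assp:ARNE_asmp}, despite the paper's own parenthetical suggestion to the contrary. Any recurrent class of the chain induced by a stochastic stationary profile is closed under every deterministic selection from that profile's support, so two disjoint recurrent classes for a stochastic profile would force at least two recurrent classes for some deterministic profile, contradicting the unichain assumption. Hence $\rho_k(\cdot,\pi)$ is constant in $s$ and continuous in $\pi$ on the whole (compact) policy space, the deviation matrices are uniformly bounded there, and the interchange of limits you need goes through without restricting to deterministic deviations; your fallback via deterministic gain-optimal best responses is nonetheless a legitimate alternative way to close that step.
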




Let $\pi_{k} \in \boldsymbol{\pi}_{k}$  be expressed as $\pi_{k} = [\pi_{k}(s)]_{s \in \mathbb{ S}}$, where ${\pi}_{k}(s) = [{\pi}_{k}(s,a_{k})]_{a_{k} \in \A_{k}(s)}$.
Further let  $\bar{a} := (a_1, \ldots, a_K)$ and $a_{-k} := \bar{a}\backslash a_{k}$. Define $\mathbb{P}(s'|s,a_{k},\pi_{-k}) = \hspace{-2mm} \sum\limits_{a_{-k} \in \A_{-k}(s)}\mathbb{P}(s'|s,\bar{a})\pi_{-k}(s, a_{-k}),$ where $\mathbb{P}(s'|s,\bar{a})$ is the probability of transitioning to a state $s'$ from state $s$ under action set $\bar{a}$. Also let 
$r_{k}(s,a_{k},{\pi}_{-k}) = \hspace{-5mm}\sum\limits_{s' \in \mathbb{S} }\sum\limits_{a_{-k} \in \A_{-k}(s)}\mathbb{P}(s'|s,\bar{a})r_{k}(s,\bar{a},s')\pi_{-k}(s, a_{-k}),$ where $r_{k}(s,\bar{a},s')$ is the reward for player $k$ under action set $\bar{a}$ when a state transitions from $s$ to $s'$.
Then a  necessary and sufficient condition for characterizing an ARNE of a stochastic game that satisfies Assumption~\ref{assp:ARNE_asmp} is given in the following proposition.

\begin{prop}[\cite{sobel1971noncooperative}, Theorem 4]\label{prop:Nes&Suf_01}
	Under Assumption~\ref{assp:ARNE_asmp}, a set of stochastic stationary policies (${\pi}_{1}, \ldots, {\pi}_{K}$) forms an ARNE in $\mathbb{G}$ if and only if  (${\pi}_{1}, \ldots, {\pi}_{K}$) satisfies,
	\begin{subequations}
		\begin{eqnarray}
		&&\hspace{-7mm} {\rho}_{k}(s, \pi) + v_{k}(s) = r_{k}(s,a_{k},{\pi}_{-k}) +\hspace{-1mm} \sum\limits_{s' \in {\mathbb{ S}}}\mathbb{P}(s'|s,a_{k}, {\pi}_{-k})v_{k}(s') +\lambda_{k}^{s,a_{k}} \nonumber\\ &&\hspace{15 mm}\text{for all}~s \in {\mathbb{ S}},~\hspace{-1mm}a_{k} \in \A_{k}(s),~\hspace{-1mm}k \in \{\hspace{-0.5 mm}1, \ldots K\},~\label{subeq:ARNEcon3} \\
		&&\hspace{-7mm}{\rho}_{k}(s, \pi) - \mu_{k}^{s,a_{k}} = \sum\limits_{s' \in \mathbb{ S}}\mathbb{P}(s'|s,a_{k},{\pi}_{-k}){\rho}_{k}(s') \nonumber \\ &&\hspace{15 mm}\text{for all}~s \in {\mathbb{ S}},~\hspace{-1mm}a_{k} \in \A_{k}(s),~\hspace{-1mm}k \in \{\hspace{-0.5 mm}1, \ldots K\},\label{subeq:ARNEcon4} \\
		&&\hspace{-7mm}\sum\limits_{k \in \{1, \ldots, K\}}\sum\limits_{s \in  \mathbb{ S}}\sum\limits_{a_{k} \in \A_{k}(s)}(\lambda_{k}^{s,a_{k}} + \mu_{k}^{s,a_{k}} ){\pi}_{k}(s,a_{k}) = 0.\label{subeq:ARNEcon5} \\
		&&\hspace{-7mm} \lambda_{k}^{s,a_{k}} \geq 0, ~ \mu_{k}^{s,a_{k}} \geq 0, ~ {\pi}_{k}(s,a_{k}) \geq 0 \nonumber\\&&\hspace{15 mm}\text{for all}~s \in {\mathbb{ S}},~\hspace{-1mm}a_{k} \in \A_{k}(s),~\hspace{-1mm}k \in \{\hspace{-0.5 mm}1, \ldots K\},~\ \label{subeq:ARNEcon1} \\
		&&\hspace{-7mm}\sum_{a_{k} \in \A_{k}(s)}\hspace{-3mm}{\pi}_{k}(s,a_{k}) = 1~\text{for all}~s \in {\mathbb{ S}},~\hspace{-1mm}k \in \{\hspace{-0.5 mm}1, \ldots K\}, \label{subeq:ARNEcon2} 
		\end{eqnarray}
	\end{subequations}
	where $v_{k}(s)$ is the ``value" of the game for player $k$ at $s \in \mathbb{ S}$.
\end{prop}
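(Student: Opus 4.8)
The plan is to reduce the multi-player equilibrium characterization to a collection of single-agent average-reward optimality problems, one per player, and then to recognize the displayed conditions as the complementary-slackness form of the classical optimality equations for those problems. First I would fix a candidate profile $\pi = (\pi_1, \ldots, \pi_K)$ and freeze the opponents' policies $\pi_{-k}$. With $\pi_{-k}$ held fixed, player $k$ faces a stationary single-agent Markov decision process $M_k$ on the same state space $\mathbb{S}$, with action sets $\A_k(s)$, one-step transition law $\mathbb{P}(s'|s,a_k,\pi_{-k})$, and one-step reward $r_k(s,a_k,\pi_{-k})$ — exactly the marginalized quantities defined immediately before the proposition. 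By Definition~\ref{def:Nash}, the profile $\pi$ is an ARNE if and only if, for every $k$, the policy $\pi_k$ maximizes $\rho_k(s,\cdot,\pi_{-k})$ simultaneously for every starting state $s \in \mathbb{S}$; that is, $\pi_k$ is a gain-optimal stationary policy of $M_k$. Since a stationary optimal policy of an average-reward MDP exists, the whole problem collapses to characterizing gain-optimal stationary policies of each $M_k$.

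Second, I would invoke the average-reward optimality equations for $M_k$. Because Assumption~\ref{assp:ARNE_asmp} only guarantees a single recurrent class under \emph{deterministic} policies, the chain induced by a stochastic profile may be genuinely multichain, so by Proposition~\ref{prop:Average_reward_results} the gain $\rho_k(s)$ need not be constant in $s$. I would therefore use the multichain optimality conditions: $\pi_k$ is gain-optimal for $M_k$ if and only if there exist a gain function $\rho_k(\cdot)$ and a bias (relative-value) function $v_k(\cdot)$ such that, for all $s$ and all $a_k \in \A_k(s)$, one has $\rho_k(s) \geq \sum_{s'}\mathbb{P}(s'|s,a_k,\pi_{-k})\rho_k(s')$ and $\rho_k(s) + v_k(s) \geq r_k(s,a_k,\pi_{-k}) + \sum_{s'}\mathbb{P}(s'|s,a_k,\pi_{-k})v_k(s')$, with equality in both whenever $a_k$ lies in the support of $\pi_k(s)$.

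Third, I would introduce the nonnegative slacks $\mu_k^{s,a_k}$ and $\lambda_k^{s,a_k}$ to convert these two families of inequalities into the equalities \eqref{subeq:ARNEcon4} and \eqref{subeq:ARNEcon3}; their nonnegativity is \eqref{subeq:ARNEcon1}, and \eqref{subeq:ARNEcon2} merely records that each $\pi_k(s,\cdot)$ is a probability vector. The requirement ``equality holds on the support'' is then precisely the complementary-slackness identity \eqref{subeq:ARNEcon5}: since every summand $(\lambda_k^{s,a_k} + \mu_k^{s,a_k})\pi_k(s,a_k)$ is a product of nonnegative terms, the sum vanishes if and only if both slacks are zero wherever $a_k$ is played with positive probability, which establishes necessity. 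For sufficiency I would run the standard verification argument: given any $(\rho_k,v_k,\lambda_k,\mu_k)$ satisfying the conditions, I sum the bias equation along a trajectory generated by an arbitrary deviating policy of player $k$, pass to the Ces\`aro limit, and use the gain inequality to show the limit cannot exceed $\rho_k(s)$, so $\pi_k$ is a best response against $\pi_{-k}$ and $\pi$ is an ARNE.

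The step I expect to be most delicate is the second one — the multichain bookkeeping. Because a stochastic equilibrium can induce several recurrent classes, I cannot take $\rho_k$ to be a scalar and use the elementary unichain Bellman equation; I must work with the state-dependent gain of Proposition~\ref{prop:Average_reward_results} and verify that the coupled gain/bias system has a solution certifying optimality. In particular, reconciling the fact that \eqref{subeq:ARNEcon3} imposes the bias inequality for \emph{all} actions, rather than only for the gain-optimal ones as in the usual nested form of the multichain optimality equations, is the point that requires the most care: it amounts to constructing a bias function $v_k$ for which the stronger all-action inequality holds, and this is where I would lean on the existence guarantee of Proposition~\ref{prop:existance_01} together with the recurrent-class structure of Proposition~\ref{prop:Average_reward_results} and the unichain hypothesis of Assumption~\ref{assp:ARNE_asmp}.
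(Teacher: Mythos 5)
The paper offers no proof of this proposition to compare against: it is imported wholesale from Theorem~4 of \cite{sobel1971noncooperative} and used as a black box. Judged on its own terms, your architecture --- freeze $\pi_{-k}$, reduce to a best-response average-reward MDP per player, read the displayed system as the complementary-slackness form of the gain/bias optimality equations, and prove sufficiency by telescoping the bias inequality along trajectories and passing to the Ces\`aro limit --- is the standard route, and the sufficiency half is sound. One simplification you missed: Assumption~\ref{assp:ARNE_asmp} in fact forces a single recurrent class under \emph{every} stationary profile, stochastic ones included (a set closed under the mixed chain is closed under every deterministic selection from its support, so two disjoint closed sets for a stochastic profile would produce two disjoint recurrent classes for a deterministic one). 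Hence $\rho_k(s,\pi)$ is constant in $s$, \eqref{subeq:ARNEcon4} holds trivially with $\mu_k^{s,a_k}=0$, and the multichain machinery you braced for is unnecessary.

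The genuine gap is in the necessity direction, exactly where you flagged the delicacy, and your proposed repair does not close it. Gain-optimality of $\pi_k$ in the best-response MDP does \emph{not} imply the existence of a bias $v_k$ satisfying the all-action inequality \eqref{subeq:ARNEcon3} together with complementary slackness \eqref{subeq:ARNEcon5}: the average-reward criterion is blind to behavior at transient states, while the complementarity system is not. Concretely, take one player and two states, with state $2$ absorbing at reward $0$ and state $1$ transient with two actions, both moving to state $2$, one earning $0$ and one earning $100$. Every policy has gain $0$ from every state, so the policy choosing the reward-$0$ action is an ARNE under Definition~\ref{def:Nash}; yet \eqref{subeq:ARNEcon5} and \eqref{subeq:ARNEcon1} force $\lambda^{1,a}=0$, hence $v(1)=v(2)$, hence $\lambda^{1,b}=-100<0$, a contradiction. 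Invoking Proposition~\ref{prop:existance_01} cannot rescue this: it guarantees that \emph{some} equilibrium exists (and one can arrange that some equilibrium solves the system), not that \emph{every} equilibrium does. Making the ``only if'' direction true requires either a recurrence hypothesis stronger than unichain (so every state is visited with positive long-run frequency and gain-optimality forces equality of the bias relation on the support everywhere), or weakening the claim to ``the system has a solution, and every solution is an ARNE.'' As written, your second step tacitly equates gain-optimal stationary policies with policies that are conserving for some bias, and that equivalence fails at transient states.
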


\subsection{Stochastic Approximation Algorithms}\label{subsec:stochastic_Apprx}

Let $h: \mathcal{R}^{m_z} \rightarrow \mathcal{R}^{m_z}$ be a continuous function of a set of parameters $z \in \mathcal{R}^{m_z}$. Then Stochastic Approximation (SA) algorithms solve a set of equations of the form  $h(z) = 0$ based on the noisy measurements of $h(z)$. The classical SA algorithm takes the following form. 

\begin{equation}\label{eq:SA_basic}
z^{n+1} = z^{n} + \delta^{n}_{z}[h(z^{n}) + w_{z}^{n}],~\text{for $n \geq 0$}
\end{equation} 

Here, $n$ denotes the iteration index and $z^{n}$ denote the estimation of $z$ at $n^{\text{th}}$ iteration of the algorithm. The terms $w_{z}^{n}$ and  $\delta^{n}_{z}$ represent the zero mean measurement noise associated with $z^{n}$ and the step-size of the algorithm, respectively. Note that the stationary points of Eqn.~\eqref{eq:SA_basic} coincide with the solutions of $h(z) = 0$ when the noise term $w_{z}^{n}$ is zero. Convergence analysis of SA algorithms requires investigating their associated Ordinary Differential Equations (ODEs). The ODE form of the SA algorithm in Eqn.~\eqref{eq:SA_basic} is given in Eqn.~\eqref{eq:SA_ODE}. 

\begin{equation}\label{eq:SA_ODE}
\dot{z} = h(z) 
\end{equation}

Additionally, the following assumptions on $\delta^{n}_{z}$ are required to guarantee the convergence of an SA algorithm. 
\begin{assumption}\label{assmp:step-size}
	The step-size $\delta^{n}_{z}$ satisfies, $\sum\limits_{n= 0}^{\infty} \delta^{n}_{z} = \infty$ and $\sum\limits_{n= 0}^{\infty} (\delta^{n}_{z})^{2} = 0$.
\end{assumption}
Few examples of $\delta^{n}_{z}$ that satisfy the conditions given in Assumption~\ref{assmp:step-size} are $\delta^{n}_{z} = 1/n$ and $\delta^{n}_{z} = 1/n\log(n)$. A convergence result that holds for a more general class of SA algorithms is given below.

\begin{prop}[]\label{prop:KC_Lemma}
	Consider an SA algorithm in the following form defined over a set of parameters $z \in \mathcal{R}^{m_z}$ and a continuous function  $h: \mathcal{R}^{m_z} \rightarrow \mathcal{R}^{m_z}$.
	\begin{equation}\label{eq:KC_itr}
	z^{n+1} = \Theta({z^{n} + \delta^{n}_{z}[h(z^{n}) + w_{z}^{n} + \kappa^{n}]}),~\text{for $n \geq 0$},
	\end{equation} 
	where $\Theta$ is a projection operator that projects each $z^{n}$ iterates onto a compact and convex set $\Lambda \in \mathcal{R}^{m_z}$ and $\kappa^{n}$ denotes a bounded random sequence. Let the ODE associated with the iterate in Eqn.~\eqref{eq:KC_itr} is given by, 
	\begin{equation}\label{eq:KC_ODE}
	\dot{z} = \bar{\Theta}(h(z)), 
	\end{equation}
	where $\bar{\Theta}(h(z)) = \lim\limits_{\eta \rightarrow 0} \frac{\Theta(z + \eta h(z)) - z}{\eta}$ and $\bar{\Theta}$ denotes a projection operator that restricts the evolution of  ODE in Eqn.~\eqref{eq:KC_ODE} to the set $\Lambda$. Let the nonempty compact set $\boldmath{Z}$ denotes a set of asymptotically stable equilibrium points of  Eqn.~\eqref{eq:KC_ODE}.
	
	Then $z^{n}$ converges almost surely to a point in $\boldmath{Z}$ as $n \rightarrow \infty$ given the following conditions are satisfied,
	\begin{enumerate}
		\item $\delta^{n}_{z}$ satisfies the conditions in Assumption~\ref{assmp:step-size}.
		\item $\lim\limits_{n \rightarrow \infty} \left(\sup\limits_{\bar{n} > n}\left|\sum\limits_{l = n}^{\bar{n}}\delta^{n}_{z}w_{z}^{n}\right|\right)  = 0$ almost surely.
		\vspace*{1mm}
		\item $\lim\limits_{n \rightarrow \infty}\kappa^{n} = 0$ almost surely.
	\end{enumerate}
\end{prop}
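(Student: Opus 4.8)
The plan is to prove convergence via the \emph{ODE method} for projected stochastic approximation, viewing the iterate in Eqn.~\eqref{eq:KC_itr} as a noisy Euler discretization of the constrained ODE in Eqn.~\eqref{eq:KC_ODE}. First I would pass to continuous time by setting $t_0 = 0$ and $t_n = \sum_{l=0}^{n-1}\delta^l_z$; the divergence $\sum_n \delta^n_z = \infty$ in Assumption~\ref{assmp:step-size} guarantees $t_n \to \infty$, so I can define a piecewise-linear interpolation $\bar{z}(\cdot)$ with $\bar{z}(t_n) = z^n$. Because every iterate is projected by $\Theta$ onto the compact convex set $\Lambda$, the sequence $\{z^n\}$ is automatically bounded and $h$ is bounded and uniformly continuous on $\Lambda$; this removes the usual stability/boundedness hurdle and lets me argue directly on the compact domain.

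Next I would decompose the increment by writing the projection through its correction term, so that $z^{n+1} = z^n + \delta^n_z[h(z^n) + w_z^n + \kappa^n] + \delta^n_z\, p^n$, where $\delta^n_z\, p^n$ is the boundary reflection induced by $\Theta$. The heart of the argument is to show that over any time window $[t_n, t_n + T]$ of fixed length $T$ the accumulated noise and perturbation are asymptotically negligible. Condition (2) is precisely the Kushner--Clark noise condition and forces the windowed noise sums to vanish almost surely; condition (3), together with the fact that the step-sizes over such a window sum to at most $T + o(1)$, forces the accumulated perturbation $\sum \delta^l_z \kappa^l$ to vanish as well. Hence on each fixed-length window the interpolation deviates from the Euler polygon of the constrained drift only by terms that tend to zero almost surely.

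I would then invoke a compactness argument. Since $h$ is bounded on $\Lambda$ and the reflection keeps $\bar{z}(\cdot)$ inside $\Lambda$, the shifted trajectories $\{\bar{z}(t_n + \cdot)\}_n$ form a uniformly bounded, equicontinuous family, so by the Arzel\`a--Ascoli theorem every subsequence admits a sub-subsequence converging uniformly on compact time intervals. Passing to the limit and using the window-wise negligibility established above, any such limit is an absolutely continuous function that remains in $\Lambda$ and satisfies the projected ODE $\dot{z} = \bar{\Theta}(h(z))$ of Eqn.~\eqref{eq:KC_ODE}. Finally, because $Z$ is a nonempty compact set of asymptotically stable equilibria of this ODE, a standard Lyapunov argument shows that every bounded limiting trajectory is attracted to $Z$; combined with the fact that $\bar{z}(\cdot)$ tracks these trajectories arbitrarily well for large $n$, this yields $z^n \to Z$ almost surely.

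The main obstacle I expect is the careful handling of the reflection term $p^n$ and its identification with the constrained-ODE correction $\bar{\Theta}(h(z)) - h(z)$: one must verify that the discrete projections accumulate exactly as prescribed by the definition $\bar{\Theta}(h(z)) = \lim_{\eta \to 0}(\Theta(z + \eta h(z)) - z)/\eta$, so that the limit solves Eqn.~\eqref{eq:KC_ODE} rather than the unprojected ODE $\dot{z} = h(z)$. The second delicate point is the noise bookkeeping, namely translating condition (2) into negligibility that is uniform in $n$ over each window; this follows the classical Kushner--Clark estimates once the interpolation and decomposition are in place.
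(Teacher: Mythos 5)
The paper does not actually prove this proposition: it appears in Section~\ref{sec:DefResults} among the ``formal definitions and existing results'' and is the classical Kushner--Clark lemma for projected stochastic approximation, imported without proof. So there is no in-paper argument to compare against. That said, your sketch is a faithful reconstruction of the standard ODE-method proof of that lemma: interpolation onto the time scale $t_n=\sum_{l<n}\delta^l_z$, boundedness obtained for free from the projection onto the compact convex set $\Lambda$, decomposition of the update into drift, noise, perturbation, and reflection terms, negligibility of the windowed noise via condition (2) and of the accumulated $\kappa^n$ contribution via condition (3), Arzel\`a--Ascoli, and identification of subsequential limits of the interpolated trajectory with solutions of the projected ODE in Eqn.~\eqref{eq:KC_ODE}. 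The two delicate points you flag (identifying the accumulated discrete reflections with the correction $\bar{\Theta}(h(z))-h(z)$, and making the noise estimates uniform over fixed-length windows) are exactly where the classical Kushner--Clark estimates do their work.

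One genuine gap worth naming is in your last step: you assert that every bounded limiting trajectory is attracted to $Z$ because $Z$ is a set of asymptotically stable equilibria. Asymptotic stability is a local property; the projected ODE may possess invariant sets disjoint from $Z$, and the ODE method by itself only yields convergence of $z^n$ to the limit set of Eqn.~\eqref{eq:KC_ODE} within $\Lambda$. To conclude $z^n\to Z$ one needs either global asymptotic stability of $Z$ relative to $\Lambda$ or the standard additional hypothesis that the iterates enter a compact subset of the domain of attraction of $Z$ infinitely often. This looseness is inherited from the proposition as stated, so your argument is no weaker than the claim it targets, but a complete proof must add one of these hypotheses---as the paper's later applications implicitly do by establishing global attractivity of the relevant critical points before invoking this result.
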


Consider a class of SA algorithms that consist of two interdependent iterates that update on two different time scales (i.e., step-sizes of two iterates are different in the order of magnitude). Let $x \in \mathcal{R}^{m_x}$ and $y \in \mathcal{R}^{m_y}$ and $n \geq 0$. Then the iterates given in the following equations portray a format of such two-time scale SA algorithm. 
\begin{eqnarray}
x^{n+1} = x^{n} + \delta_{x}^{n}[f(x^{n},y^{n}) + w_{x}^{n}], \label{eq:Bor1}\\ 
y^{n+1} = y^{n} + \delta_{y}^{n}[g(x^{n},y^{n}) + w_{y}^{n}]. \label{eq:Bor2}
\end{eqnarray}

The following proposition provides a convergence result related to the aforementioned two-time scale SA algorithm.
\begin{prop}[\cite{borkar1997stochastic}, Theorem~2]\label{Prop:Borkar}
	Consider $x^{n}$ and $y^{n}$ iterates given in Eqns.~\eqref{eq:Bor1} and~\eqref{eq:Bor2}, respectively.
	Then, given the iterates in Eqns.~\eqref{eq:Bor1} and~\eqref{eq:Bor2} are bounded, $\{(x^{t},y^{t})\}$ converges to $(\psi(y^{*}),y^{*})$ almost surely under the following conditions.
	\begin{enumerate}
		\item $f:\mathcal{R}^{m_x + m_y} \rightarrow \mathcal{R}^{m_x}$ and $g:\mathcal{R}^{m_x + m_y} \rightarrow \mathcal{R}^{m_y}$ are Lipschitz.\label{con:Bor1}
		\item Iterates $x^{n}$ and $y^{n}$ are bounded. \label{con:Bor2}
		\item Let $\psi: y \rightarrow x$. For all $y \in \mathcal{R}^{m_y}$, the ODE $\dot{x} = f(x,y)$ has an asymptotically stable critical point $\psi(y)$ such that function $\psi$ is Lipschitz. \label{con:Bor3}
		\item The ODE $\dot{y} = g(\psi(y),y)$ has a global asymptotically stable critical point. \label{con:Bor4}
		\item Let $\xi^{n}$ be an increasing $\sigma $-field defined by $\xi^{n} := \sigma({x}^{n}, \ldots, {x}^{0},{y}^{n}, \ldots, {y}^{0},{w}_{x}^{n-1}, \ldots, {w}_{x}^{0}, {w}_{y}^{n-1}, \ldots, {w}_{y}^{0})$. Further let $\kappa_{x}$ and $\kappa_{y}$ be two positive constants. Then $w_{x}^{n}$ and $w_{y}^{n}$ are two noise sequences that satisfy, $\mathbb{E}[ w_{x}^{n}|\xi^{n}] = 0$, $\mathbb{E}[w_{y}^{n}|\xi^{n}] = 0$, $\mathbb{E}[\parallel w_{x}^{n} \parallel^{2}|\xi^{n}] \leq \kappa_{x}(1 + \parallel {x}^{n} \parallel + \parallel {y}^{n} \parallel),$ and  $\mathbb{E}[\parallel w_{y}^{n} \parallel^{2}|\xi^{n}] \leq \kappa_{y}(1 + \parallel {x}^{n} \parallel + \parallel {y}^{n} \parallel)$. \label{con:Bor5}
		\item $\delta_{x}^{n}$ and $\delta_{y}^{n}$ satisfy conditions in Assumption~\ref{assmp:step-size}. Additionally, $\lim\limits_{n \rightarrow \infty} \sup \frac{\delta_{y}^{n}}{\delta_{x}^{n}} = 0$. \label{con:Bor6}
	\end{enumerate}
\end{prop}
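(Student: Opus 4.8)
The plan is to exploit the separation of time scales encoded in condition~\ref{con:Bor6}, $\lim_{n\to\infty}\sup \delta_y^n/\delta_x^n = 0$, which makes the $y^n$ iterate evolve infinitely slowly relative to $x^n$. The argument proceeds in two stages, each reducing a coupled iterate to a single-time-scale stochastic approximation to which Proposition~\ref{prop:KC_Lemma} applies.

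First I would treat the fast iterate~\eqref{eq:Bor1} with $y^n$ regarded as quasi-static. Because the $y$-increments satisfy $\delta_y^n[g(x^n,y^n)+w_y^n] = o(\delta_x^n)$ along the natural time scale $t(n) := \sum_{m<n}\delta_x^m$ of $x$, the interpolated $x$-trajectory is, up to an asymptotically negligible perturbation, a solution of the frozen-parameter ODE $\dot x = f(x,y)$. Boundedness (condition~\ref{con:Bor2}) and the Lipschitz property of $f$ (condition~\ref{con:Bor1}) keep this interpolation well behaved, while the martingale hypotheses in condition~\ref{con:Bor5}, together with the step-size requirements in Assumption~\ref{assmp:step-size}, guarantee that the accumulated noise $\sum \delta_x^n w_x^n$ vanishes in the sense required by Proposition~\ref{prop:KC_Lemma}. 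Invoking the asymptotic stability of $\psi(y)$ in condition~\ref{con:Bor3} then yields $\norm{x^n - \psi(y^n)} \to 0$ almost surely.

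Next I would analyze the slow iterate~\eqref{eq:Bor2}. Substituting $x^n = \psi(y^n)+\epsilon^n$ with $\epsilon^n\to 0$ almost surely and using the Lipschitz bound on $g$, I can rewrite~\eqref{eq:Bor2} as $y^{n+1} = y^n + \delta_y^n[g(\psi(y^n),y^n)+w_y^n+\kappa^n]$, where the bias term $\kappa^n := g(x^n,y^n)-g(\psi(y^n),y^n)$ is bounded by a constant times $\norm{\epsilon^n}$ and hence tends to $0$ almost surely. This is precisely the form~\eqref{eq:KC_itr}, so Proposition~\ref{prop:KC_Lemma} applies once more: its three hypotheses are supplied by the step-size conditions, the martingale bound on $w_y^n$ in condition~\ref{con:Bor5}, and the just-established decay of $\kappa^n$. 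The associated ODE is $\dot y = g(\psi(y),y)$, whose global asymptotic stability in condition~\ref{con:Bor4} forces $y^n\to y^*$ almost surely. Continuity of the Lipschitz map $\psi$ then gives $x^n = \psi(y^n)+\epsilon^n \to \psi(y^*)$, establishing $(x^n,y^n)\to(\psi(y^*),y^*)$.

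The hard part will be the first stage: making rigorous the claim that $x^n$ tracks the frozen ODE even though $y^n$ is genuinely drifting. The crux is to show that the cumulative effect of this drift over any fixed $x$-time window is $o(1)$, which relies on comparing the two step-size sequences through condition~\ref{con:Bor6} and on the a priori boundedness in condition~\ref{con:Bor2}; without boundedness one would additionally need a stability argument to rule out escape to infinity. A secondary subtlety is that the substitution in the second stage produces a genuinely vanishing perturbation only because $\psi$ is assumed Lipschitz, not merely measurable, in condition~\ref{con:Bor3}, so that the error $\epsilon^n$ transfers cleanly through $g$.
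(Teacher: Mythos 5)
The paper offers no proof of this proposition: it is imported verbatim from \cite{borkar1997stochastic} (Theorem~2), so there is no internal argument to compare yours against. Your outline is, in substance, a reconstruction of the standard two-time-scale argument from that reference: reduce the fast iterate to the frozen ODE $\dot{x}=f(x,y)$ by showing the slow iterate moves by $o(1)$ over any fixed window of the fast time scale (this is where condition~\ref{con:Bor6} enters), conclude $\|x^{n}-\psi(y^{n})\|\to 0$ from condition~\ref{con:Bor3}, then absorb the tracking error into a vanishing perturbation $\kappa^{n}$ of the slow iterate and apply a Kushner--Clark-type result (Proposition~\ref{prop:KC_Lemma}) with the averaged ODE $\dot{y}=g(\psi(y),y)$ and condition~\ref{con:Bor4}. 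The structure and the role assigned to each hypothesis are correct, and you rightly flag the two genuine technical burdens: making the quasi-static approximation rigorous over fixed ODE time windows, and the need for $\psi$ to be Lipschitz so the error transfers cleanly through $g$. Two caveats if you were to write this out in full: Proposition~\ref{prop:KC_Lemma} as stated involves a projection operator $\Theta$, whereas Eqns.~\eqref{eq:Bor1}--\eqref{eq:Bor2} are unprojected, so you must either invoke the unprojected Kushner--Clark variant (with boundedness, condition~\ref{con:Bor2}, standing in for the compactness the projection provides) or justify the substitution; and the first stage cannot literally be an application of Proposition~\ref{prop:KC_Lemma} with a single fixed ODE, since the ``frozen'' parameter $y^{n}$ is itself drifting --- the precise statement is that the interpolated $x$-trajectory asymptotically tracks the nonautonomous family $\dot{x}=f(x,y(t))$ with $y(\cdot)$ slowly varying, which is exactly the content of Lemma~1 in \cite{borkar1997stochastic}.
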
 

\section{System and Defender Models}\label{sec:prelim}
In this section we detail the concept of information flow graph and the details on the DIFT defender model.

\subsection{Information Flow Graph}
 Information Flow Graph (IFG), $\G=(V_{\G}, E_{\G})$, is a representation of the computer system, where the set of nodes, $V_{\G} = \{u_1, \ldots, u_{N}\}$ depicts the $N$ distinct components of the computer and the set of edges $E_{\G} \subset V_{\G} \times V_{\G}$ represents the feasibility of transferring information flows between the components. Specifically, an edge $e_{ij} \in E_{\G}$ indicates that an information flow can be transferred from a component $u_{i}$ to another component $u_{j}$, where $i,j \in \{1, \ldots, N\}$ and $i \neq j$. Let $\En \subset V_{\G}$ be the set of entry points used by $A$ to infiltrate the computer system. Consider an APT attack that consists of $M$ attack stages and let $\D_{j} \subset V_{\G}$ for each $j \in \{1, \ldots, M\}$ be the set of components that are targeted by the APT in the $j^{\text{th}}$ attack stage. Let $\D_{j}$ be the set of \emph{destinations} of stage $j$.

\subsection{DIFT Defender Model}
DIFT tags/taints all the information flows originating from the set of entry points as suspicious flows. Then DIFT tracks the propagation of the tainted flows through the system and initiates security analysis at specific components of the system to detect the APT. Performing security analysis incurs memory and performance overheads to the system which varies across the system components.
The objective of DIFT  is to 
 select a set of system components for performing security analysis while minimizing the memory and performance overhead.
On the other hand, the objective of APT is to evade detection by DIFT and successfully complete the attack by sequentially reaching at least one node from each set $\D_{j}$, for all $j = 1, \hdots, M$. 

\section{Problem Formulation: DIFT-APT Game}\label{sec:Game}
In this section, we model the interactions between a DIFT-based defender $(D)$ and an APT adversary $(A)$ as a two-player stochastic game (DIFT-APT game). The DIFT-APT game unfolds in the infinite time horizon $t \in \T :=\{1, 2,\ldots \}$.

\subsection{State Space and Action Space}\label{subsec-state_space}
Let ${\bf S}  := \{s_0\} \cup \{V_{\G} \times \{1, \hdots, j\}\}  = \{ s_0, s_1^j, \ldots, s_{N}^j\}$, for all $j \in \{1, \hdots, M\}$, represent the finite state space of DIFT-APT game. The state $s_0$ represents the reconnaissance stage of the attack where APT chooses an entry point of the system to launch the attack. Therefore, at time $t = 0$, DIFT-APT game starts from $s_0$. A state $s_i^j$ denotes a tagged information flow at a system component $u_{i} \in V_{\G}$ corresponding to the $j^{\text{th}}$ attack stage. Also, note that a state $s_i^j$, where $u_{i} \in \D_{j}$ and $j \in \{1, \ldots, M-1\}$, is associated with APT achieving the intermediate goal of stage $j$. Moreover, a state $s_i^M$, where $u_{i} \in \D_{M}$,  represents APT achieving the final goal of the attack. 


Let $\mathcal{N}(s)$ be the set of out-neighboring states of state $s \in {\bf S}$. Let $\A_{k} = \cup_{s \in {\bf S}}\A_{k}(s)$ be the action space of the player $k \in \{D, A\}$, where $\A_{k}(s)$ denotes the set of actions allowed for player $k$ at a state $s$. The action sets of  the players at any state $s \in {\bf S}$ is given by $\A_{\sD}(s) \in \mathcal{N}(s)\cup \{0\}$ and $\A_{\sA}(s) \in \mathcal{N}(s)\cup \{\varnothing\} $. Here, $\A_{\sD}(s) \in \mathcal{N}(s)$ and $\A_{\sD}(s) = 0$ denote DIFT deciding to perform security analysis at an out-neighboring state and deciding not to perform security analysis, respectively. Also, $\A_{\sA}(s) \in \mathcal{N}(s)$ represents APT deciding to transition to an out-neighboring state of $s \in {\bf S}$ and $\varnothing$ represents APT quitting the attack. At each step of the game DIFT and APT \textit{simultaneously} choose their respective actions. 


Specifically, there are four cases. (i) $s = s_{0}$, $\A_{\sA}(s) = \{s_{i}^{1} : u_{i} \in \En\}$ and $\A_{\sD}(s) = 0$. Here, APT selects an entry point in the system to initiate the attack.
(ii)~$\{s = s_{i}^{j}: u_{i} \not\in \D_{j},  j = 1, \hdots, M\}$, $\A_{\sA}(s) = \{s_{i'}^{j} :  (u_{i}, u_{i'}) \in E_{\G}\} \cup \{\varnothing \}$ and $\A_{\sD}(s) \in \{s_{i'}^{j}:  (u_{i}, u_{i'}) \in E_{\G}\} \cup \{0\}$.  In other words,  APT chooses to transition to one of the out-neighboring node of $u_i$ in stage $j$ or decides to quit the attack ($\varnothing$) and DIFT decides to perform security analysis at an out-neighboring node of $u_i$ in stage $j$ or not. (iii) $\{s = s_{i}^{j}: u_{i} \in \D_{j}, j = 1, \hdots, M-1\}$, $\A_{\sA}(s) = s_{i}^{j+1}$ and $\A_{\sD}(s) = 0$. That is, APT traverses from stage $j$ of the attack to stage $j+1$ and DIFT does not perform a security analysis. (iv) $\{s = s_{i}^{M} : s_{i} \in \D_{M}\}$. Then, $\A_{\sA}(s) = s_0$ which captures the persistency of the APT attack  and $\A_{\sD}(s) = 0$.   

Note that  DIFT does not perform security analysis at the states corresponding to $s_0$ and destinations due to the following reasons. At the entry points there are not enough traces to perform security analysis as attack originates at these system components. The destinations $\D_{j}$, for $j \in \{1, \ldots, M\}$, typically consist of busy processes and/or confidential files with restricted access. Performing security analysis at states corresponding to entry points and destinations is not allowed.

\subsection{Policies  and Transition Structure}\label{subsec-transition}
 
Let $s_t$ be the state of the game at time $t \in \T$. Consider \emph{stationary} policies for DIFT and APT, i.e., decisions made at a state $s_t \in \bf S$ at any time $t$ only depends on $s_t$. Let ${\boldsymbol{\pi}}_{\sD}$ and $\boldsymbol{\pi}_{\sA}$ be the set of stationary policies of DIFT and APT, respectively. Then stochastic stationary policies of DIFT and APT are defined by $\pi_{k} \in [0, 1]^{|\A_{k}|}$, where $\pi_{k} \in {\boldsymbol{\pi}}_{k}$ and  $k \in \{D, A\}$. Moreover, let $\pi_{k} = [\pi_{k}(s)]_{s \in {\bf S}}$ and $\pi_{k}(s) = [\pi_{k}(s,a_k)]_{a_k \in \A_{k}(s)}$, where $\pi_{k}(s)$ and $\pi_{k}(s,a_k)$ denote the policy of a player $k \in \{D, A\}$ at a state $s \in {\bf S}$ and probability of player $k$ choosing an action $a_{k} \in \A_{k}(s)$ at the state $s$.  In what follows, we use $a_{k} = d$ when $k = D$ and $a_{k} = a$ when $k = A$ to denote an action of DIFT and APT at a state $s$, respectively.

Assume state transitions are \emph{stationary}, i.e., state at time $t+1$, ${s}_{t+1}$  depends only on the current state ${s}_t$ and the actions $a_t$ and $d_t$ of both players at the state ${s}_t$, for any $t \in \T$. Let $\mathbf{P}$ be the transition structure of the DIFT-APT game. Then $\mathbf{P}(\pi_{\sD},\pi_{\sA})$ represents the state transition matrix of the game resulting from $(\pi_{\sD}, \pi_{\sA}) \in ({\boldsymbol \pi}_{\sD},  {\boldsymbol \pi}_{\sA})$. Then,
$$\mathbf{P}(\pi_{\sD},\pi_{\sA}) = \left[{\bf P}(s'|s,\pi_{\sD},\pi_{\sA})\right]_{s,s' \in {\bf S}},~\mbox{where}$$  
\begin{equation}\label{eq:Pssda}
{\bf P}(s'|s,\pi_{\sD},\pi_{\sA}) \hspace{-0.5mm}= \hspace{-1mm}\sum\limits_{d \in \A_{\sD}(s)}\sum\limits_{a \in \A_{\sA}(s)}{\bf P}(s'|s,d,a)\pi_{\sD}(s,d)\pi_{\sA}(s,a).
\end{equation}


Here $\mathbf{P}(s'|s, d, a)$ denotes the probability of transitioning to state $s'$ from state $s$ when DIFT chooses an action $d \in \A_{\sD}(s)$ and APT chooses an action $a \in \A_{\sA}(s)$. Let $FN(s_i^j)$ denote the rate of false negatives generated at a system component $u_i \in \V$ while analyzing a tagged flow corresponding to stage~$j$ of the attack. Then for a state $s_{t}$, actions $d_{t}$ and $a_{t}$ the possible next state $s_{t+1}$ are as follows,
\begin{equation}\label{eq:fn_trans}
s_{t+1} =
\begin{cases}
\begin{array}{lll}
s_{i}^{j}, & \mbox{~w.p~} 1, &\mbox{~when~} d_{t} = 0 \mbox{~and~} a_{t} =s_{i}^{j}\\
s_{i}^{j}, & \mbox{~w.p~} FN(s_{i}^j), &\mbox{~when~} d_{t} = a_{t} =s_{i}^{j}\\
s_0, & \mbox{~w.p~} 1-FN(s_{i}^j), &\mbox{~when~} d_{t} = a_{t} =s_{i}^{j} \\
s_{i}^{j}, & \mbox{~w.p~} 1, &\mbox{~when~} d_{t} \neq a_{t} \\
s_0, & \mbox{~w.p~} 1, &\mbox{~when~} a_{t} = \varnothing.
\end{array}
\end{cases}
\end{equation}
In the first case of Eqn.~\eqref{eq:fn_trans}, the next state of the game is uniquely defined by the action of APT as DIFT does not perform security analysis. In the second and thrid cases of Eqn.~\eqref{eq:fn_trans}, DIFT decides correctly to perform security analysis on the malicious flow. Note that the security analysis of DIFT can not accurately detect a possible attack due to generation of false negatives. Hence the next state of the game is determined by the action of APT (in case two) when a false negative is generated. And the next state of the game is $s_0$ (in case three) when APT is detected by DIFT and APT starts a new attack. Case four of  Eqn.~\eqref{eq:fn_trans} represents DIFT performing security analysis on a benign flow. In such a case, the state of the game is uniquely defined by the action of the adversary. Finally, in case five of Eqn.~\eqref{eq:fn_trans}, i.e., when APT decides to quit the attack, the next state of the game is the initial state $s_0$. 


False negatives of the DIFT scheme arise from the limitations of the security rules that can be deployed at each node of the IFG (i.e., processes and objects in the system). Such limitations are due to variations in  the number of rules and the depth of the security analysis\footnote{Detecting an unauthorized use of tagged flow crucially depends on the path traversed by the information flow \cite{ ClaLiOrs:07, SuhLeeZhaDev-04}.} (e.g., system call level trace, CPU instruction level trace) that can be implemented at each node of the IFG resulting from the resource constraints including memory, storage and processing power imposed by the system on each IFG node.

\subsection{Reward Structure}\label{subsec-reward}

Let  ${r}_{\sD}(s,\pi_{\sD}, \pi_{\sA})$ and ${r}_{\sA}(s, \pi_{\sD}, \pi_{\sA})$ be the expected reward of DIFT and APT at a state $s \in {\bf S}$ under policy pair $(\pi_{\sD}, \pi_{\sA}) \in ({\boldsymbol\pi}_{\sD}, {\boldsymbol\pi}_{\sA})$. Then for each $k \in \{D,A\}$, 
\begin{equation*}
r_{k}(s,\pi_{\sD}, \pi_{\sA}) = \hspace{-1mm}\sum\limits_{s' \in {\bf S}}\sum\limits_{\substack{a \in \A_{\sA}(s) \\ d \in \A_{\sD}(s)}} \hspace{-3mm} {\bf P}(s'|s,d,a)\pi_{\sD}(s, d)\pi_{\sA}(s, a) r_{k}(s, d, a, s'),
\end{equation*}
where $r_{k}(s, d, a, s')$ denotes the reward of player $k$ when state transition from $s$ to $s'$ under actions $d \in \AD(s)$ and $a \in \AA(s)$ of DIFT and APT, respectively. Moreover, $r_{\sD}(s,d,a,s')$ and $r_{\sA}(s,d,a,s')$ are defined as follows.
\begin{eqnarray*}
	r_{\sD}(s, d, a,s') \hspace{-3mm}&=& \hspace{-3mm}\begin{cases}
		\begin{array}{ll}
			\alpha_{\sD}^{j} + \C_{\sD}(s) & \mbox{~if~} d = a,~s' = s_0  \\
			\beta_{\sD}^{j}  & \mbox{~if~}  d = 0,~s' \in \{s^{j}_{i}: u_{i} \in \D_{j}\} \\
			\sigma_{\sD}^{j} + \C_{\sD}(s) & \mbox{~if~}  d \neq 0,~a = \varnothing \\
			\sigma_{\sD}^{j} & \mbox{~if~}  d = 0,~a = \varnothing \\
			\C_{\sD}(s) & \mbox{~if~}  d \neq a \mbox{~and~} d \neq 0\\
			0 & \mbox{~otherwise~}
		\end{array}
	\end{cases} \\
	r_{\sA}(s, d, a,s')\hspace{-3mm} &=& \hspace{-3mm}\begin{cases}
	\begin{array}{ll}
		\alpha_{\sA}^{j} & \mbox{~if~} d = a,~s' = s_0  \\
		\beta_{\sA}^{j} &\mbox{~if~}  s' \in \{s^{j}_{i}: u_{i} \in \D_{j}\} \\
		\sigma_{\sA}^{j} & \mbox{~if~} a = \varnothing  \\
		0 & \mbox{~otherwise~}
	\end{array}
\end{cases} 
\end{eqnarray*}

The reward structure $r_{\sD}(s, d, a,s')$ captures the cost of false positive generation by assigning a cost $\C_{\sD}(s)$ whenever $d \neq a$ such that $d \neq 0$. Note that, $r_{\sD}(s, d, a, s')$ consists of four components (i)~reward term $\alpha_{\sD}^{j} > 0$ for DIFT detecting the APT in $j^{\text{th}}$ stage (ii)~penalty term $\beta_{\sD}^{j} < 0$ for APT reaching a destination of stage $j$, for $j = 1, \hdots, M$ (iii)~reward $\sigma_{\sD}^{j} > 0$ for APT quitting the attack in $j^{\text{th}}$ stage and (iv)~a security cost $\C_{\sD}(s) < 0$ that captures the memory and storage costs associated with performing  a security checks on a tagged flow at a state $s \in \{s^{j}_{i}: u_{i} \not\in \D_{j} \cup \En\}$. On the other hand $r_{\sA}(s, d, a, s')$ consists of three components (i)~penalty term $\alpha_{\sA}^{j} < 0$ if  APT is detected by DIFT in the $j^{\text{th}}$ stage (ii)~reward term $\beta_{\sA}^{j}>0$ for APT reaching a destination of stage $j$, for $j = 1, \hdots, M$ and (iii)~penalty term $\sigma_{\sA}^j < 0$ for APT quitting the attack in $j^{\text{th}}$ stage. Since it is not necessary that $r_{\sD}(s, d, a, s') = -r_{\sA}(s, d, a, s')$ for all $d \in \AD(s)$, $a \in \AA(s)$ and $s, s' \in {\bf S}$, DIFT-APT game is a \emph{nonzero-sum} game.

\subsection{Information Structure}
Both DIFT and APT are assumed to know the current state, $s_t$  of the game, both action sets $\AD(s_t)$ and $\AA(s_t)$, and payoff structure of the DIFT-APT game. But DIFT is unaware whether a tagged flow at $s_t$ is malicious or not and APT does not know the chances of getting detected at $s_t$. This results in an information {\em asymmetry} between the players. Hence DIFT-APT game is an \textit{imperfect} information game. Furthermore, both players are unaware of the transition structure ${\bf P}$ which depend on the rate of false negatives generated at the different states $s_t$  (Eq.~\eqref{eq:fn_trans}). Consequently, the DIFT-APT game is an \textit{incomplete} information game.

\subsection{Solution Concept: ARNE}\label{sec:sol_concept}
APTs are stealthy attackers whose interactions with the system span over a long period of time. Hence, players $D$ and $A$ must consider the rewards they incur over the long-term time horizon when they decide on their policies $\pi_{D}$ and $\pi_{A}$, respectively.  Therefore, average reward payoff criteria is used to evaluate the outcome of DIFT-APT game for a given policy pair $(\pi_{D}, \pi_{A}) \in (\boldsymbol{\pi}_{D}, \boldsymbol{\pi}_{A})$. 
Note that, the DIFT-APT game originates at $s_0$. Thus the average payoff for player $k \in \{D, A\}$ with policy pair $(\pi_{D}, \pi_{A})$ is defined as follows.
\begin{equation*}
\rho_{k}(s_0, \pi_{D}, \pi_{A}) = \liminf\limits_{T \rightarrow \infty}\frac{1}{T+1}\sum\limits_{t = 0}^{T}\mathbb{E}_{s_0,\pi_{D}, \pi_{A}}[r_{k}(s_t,d_t,a_t)].
\end{equation*}
Moreover, a pair of stationary policies $(\pi_{D}^{*}, \pi_{A}^{*})$ forms an ARNE of DIFT-APT game if and only if
$$\rho_{D}(s, \pi_{D}^{*}, \pi_{A}^{*}) \geq \rho_{D}(s, \pi_{D}, \pi_{A}^{*}),~~~\rho_{A}(s, \pi_{D}^{*}, \pi_{A}^{*}) \geq \rho_{A}(s, \pi_{D}^{*}, \pi_{A})$$ for all $s \in {\bf S}, \pi_{k} \in \boldsymbol{\pi}_{k}$.



\section{Analyzing ARNE of the DIFT-APT Game}\label{sec:Equilibrium}
In this section we first show the existence of ARNE in DIFT-APT game. Then we provide necessary and sufficient conditions required to characterize  an ARNE of DIFT-APT game. Henceforth we assume the following assumption holds for the IFG associated with the DIFT-APT game.

\begin{assumption}\label{assu:1}
	 The IFG is acyclic. 
\end{assumption}

Any IFG with set of cycles can be converted into an acyclic IFG without loosing any causal relationships between the components given in the original IFG. One such dependency preserving conversion is \emph{node versioning} given in \cite{milajerdi2019holmes}. Hence this assumption is not restrictive.
Let $\mathbf{P}(\pi_{D}, \pi_{A})$ be the MC induced by a policy pair $(\pi_{D}, \pi_{A})$.  The following theorem presents  properties of DIFT-APT game under Assumption~\ref{assu:1}. 

\begin{theorem}\label{thm:property}
	Let the  DIFT-APT game satisfies Assumption~\ref{assu:1}. Then, the following properties hold.
	\begin{enumerate}
		\item $\mathbf{P}(\pi_{D}, \pi_{A})$ corresponding to any $(\pi_{D}, \pi_{A}) \in (\boldsymbol{\pi}_{D}, \boldsymbol{\pi}_{A})$ consists of a single recurrent class of states (with possibly some transient states reaching the recurrent class).
		\item The recurrent class of $\mathbf{P}(\pi_{D}, \pi_{A})$ includes the state $s_0$.
	\end{enumerate}
\end{theorem}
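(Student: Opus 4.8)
The plan is to reduce both claims to a single reachability statement: in the Markov chain $\mathbf{P}(\pi_{D},\pi_{A})$ induced by \emph{any} (possibly stochastic) stationary policy pair, the initial state $s_0$ is reachable from every $s \in {\bf S}$. Once this is in hand, both parts follow from elementary finite Markov chain theory. A finite chain has at least one recurrent class, and a recurrent class is closed under the dynamics; so if $R$ is any recurrent class, picking $s \in R$ and using that $s_0$ is reachable from $s$ while $R$ cannot be exited forces $s_0 \in R$. Since distinct recurrent classes are disjoint, there can be only one, and it contains $s_0$, which gives (1) and (2) simultaneously (the transient states being exactly those from which $s_0$ is reachable but which are themselves not reachable from $s_0$). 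In particular $s_0$ is recurrent rather than transient.

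To prove the reachability statement I would build a rank function $\Phi:{\bf S}\setminus\{s_0\}\to\mathbb{Z}_{\geq 0}$ that strictly decreases along every ``forward'' move of the adversary, and then induct on $\Phi$. Using Assumption~\ref{assu:1} that the IFG is a DAG, let $r(u_i)$ be the length of the longest directed path in the IFG starting at $u_i$, so that $(u_i,u_{i'})\in E_{\G}$ forces $r(u_i) > r(u_{i'})$, and set $\Phi(s_i^j) = (M-j)(|V_{\G}|+1) + r(u_i)$. A within-stage transition $s_i^j \to s_{i'}^j$ strictly decreases $r$, while a destination transition $s_i^j \to s_i^{j+1}$ (forced when $u_i \in \D_j$ and $j<M$) decreases the stage term by $|V_{\G}|+1$; both strictly lower $\Phi$. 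The base case $\Phi=0$ forces $j=M$ and $r(u_i)=0$, i.e.\ stage-$M$ sinks: if $u_i\in\D_M$ the transition to $s_0$ is forced, and if $u_i\notin\D_M$ the node has no out-neighbor, so $\A_{\sA}(s)=\{\varnothing\}$ and the adversary must quit, again landing in $s_0$ by case five of Eqn.~\eqref{eq:fn_trans}.

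For the inductive step I would split on the state type, reading the allowed transitions off the action sets and Eqn.~\eqref{eq:fn_trans}. At an intermediate destination ($u_i\in\D_j$, $j<M$) the unique transition is to $s_i^{j+1}$, which has smaller $\Phi$, so the induction hypothesis closes the case; at a stage-$M$ destination the transition to $s_0$ is forced. At a non-destination state, $\pi_{A}(s,\cdot)$ places positive mass on some action: if that action is $\varnothing$ we reach $s_0$ directly, and if it is a move to an out-neighbor $s_{i'}^j$, then with positive probability the chain goes either to $s_{i'}^j$ (false negative, or DIFT checking a different node) or to $s_0$ (detection), with $\Phi(s_{i'}^j)<\Phi(s)$ settling the former case by the hypothesis.

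The step needing the most care is verifying that such an adversary move actually carries positive probability in the induced chain, since Eqn.~\eqref{eq:fn_trans} couples both players' policies with the false-negative rate. The key observation is that the only degenerate sub-case, where the transition to $s_{i'}^j$ has zero probability, is exactly $\pi_{D}(s,a)=1$ together with $FN(s_{i'}^{j})=0$, and in that case the entire mass of the adversary's action flows into $s_0$; so $s_0$ remains reachable either way. I would also emphasize at the outset that establishing this for \emph{stochastic} policies is strictly stronger than Assumption~\ref{assp:ARNE_asmp} (which only constrains deterministic policies), and therefore certifies that the DIFT-APT game is unichain, making Propositions~\ref{prop:existance_01} and~\ref{prop:Nes&Suf_01} directly applicable.
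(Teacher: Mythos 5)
Your proposal is correct and follows essentially the same route as the paper: both reduce the claim to showing that $s_0$ is reachable from every state under an arbitrary stationary policy pair, using acyclicity of the IFG to force any non-returning trajectory forward to a stage-$M$ destination (or a dead end) from which the transition to $s_0$ is guaranteed, and then conclude the unichain structure. Your version is somewhat tighter than the paper's — the explicit rank function replaces the paper's informal "recursively applying cases i) and ii)" step, and your check that the move to an out-neighbor carries positive probability (with the degenerate $\pi_{D}(s,a)=1$, $FN=0$ sub-case still landing in $s_0$) is a detail the paper glosses over — but it is the same argument, not a different one.
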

\begin{proof}
	
	Consider a partitioning of the state space such that ${\bf S} = {\bf S}_{1} \cup {\bf S}_{2}$ and ${\bf S}_{1} \cap {\bf S}_{2} = \varnothing$. Here ${\bf S}_{1}$ denotes the set of states that are reachable\footnote{In a directed graph a state $u$ is said to be reachable from state $v$, if there exists a directed path from $v$ to $u$.} from state $s_0$ and ${\bf S}_{2}$ denotes the set of states that are not reachable from $s_0$. We prove $1)$ and $2)$ by showing that ${\bf S}_{1}$ forms a single recurrent class of $\mathbf{P}(\pi_{D}, \pi_{A})$ and ${\bf S}_{2}$ forms the set of transient states.
	
    We first show that in $\mathbf{P}(\pi_{D}, \pi_{A})$, state $s_0$ is reachable from any arbitrary state $s \in {\bf S} \setminus \{s_0\}$. The proof consists of two steps.
	First consider a state $s = s^{j}_{i}$, such that  $u_{i} \notin \D_{M}$ with $j = M$. In other words, the state $s$ is not a state that is corresponding to a final goal of the attack.  Let $s'$ be an out neighbor of $s$. Then $s'$ satisfies one of the two cases. i)~$s' = s_0$ and ii)~$s' = s^{j'}_{i'} \in {\bf S} \setminus \{s_0\}$. Case~i) happens if the APT decides to dropout from the game or if DIFT successfully detects the APT. Thus in case~i) $s_0$ is reachable from $s$.
	
	Case~ii) happens when  DIFT does not detect APT and the APT chooses to move to an out neighboring state $s'$. By recursively applying cases i) and ii) at $s'$, we get $s_0$ is reachable  when case~i) occurs at least once.  What is remaining to show is when only case~ii) occurs. In such a case, transitions from $s'$ will eventually reach a state corresponding to a final goal of the attack,  i.e., $s^{M}_{i}$ with $u_{i} \in \D_{M}$, due to the acyclic nature of the IFG imposed by Assumption~\ref{assu:1}. Note that at $s^{M}_{i}$ with $u_{i} \in \D_{M}$  the only transition possible is to $s_0$. This proves that $s_0$ is reachable from any state $s \in {\bf S} \setminus s_0$. 
	
	
	
This along with the definition of ${\bf S}_{1}$ implies that ${\bf S}_{1}$ forms a recurrent class of $\mathbf{P}(\pi_{D}, \pi_{A})$. Also as $s_0$ is reachable from any state in ${\bf S}_{2}$ and by the definition of ${\bf S}_{2}$,  ${\bf S}_{2}$ is the set of transient states. This completes the proof.
\end{proof}


Corollary~\ref{cor:exist_Gamma} below presents the existence of an ARNE in DIFT-APT using Theorem~\ref{thm:property}.
\begin{cor}\label{cor:exist_Gamma}
	Let the DIFT-APT game satisfies Assumption~\ref{assu:1}. Then, there exits an ARNE for the DIFT-APT game.
\end{cor}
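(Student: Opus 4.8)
The plan is to obtain the corollary as an immediate consequence of Theorem~\ref{thm:property} together with Proposition~\ref{prop:existance_01}. First I would recall that Proposition~\ref{prop:existance_01} guarantees the existence of an ARNE for any stochastic game satisfying Assumption~\ref{assp:ARNE_asmp}, namely the unichain condition that every induced Markov chain $\mathbf{P}(\pi)$ arising from a \emph{deterministic} stationary policy set contains exactly one recurrent class of states. Hence the entire task reduces to verifying that the DIFT-APT game is a unichain stochastic game, after which the corollary follows by direct invocation of the proposition.

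Next I would observe that Theorem~\ref{thm:property} already establishes precisely the required unichain property, and in fact something slightly stronger. Part~$1)$ of Theorem~\ref{thm:property} states that $\mathbf{P}(\pi_{D}, \pi_{A})$ corresponding to \emph{any} policy pair $(\pi_{D}, \pi_{A}) \in (\boldsymbol{\pi}_{D}, \boldsymbol{\pi}_{A})$ consists of a single recurrent class of states with possibly some transient states. Since deterministic stationary policies form a subset of the stochastic stationary policies ranged over in the theorem, the single-recurrent-class conclusion holds in particular for every deterministic policy pair. This is exactly the statement of Assumption~\ref{assp:ARNE_asmp} specialized to the two-player DIFT-APT game, so Assumption~\ref{assp:ARNE_asmp} is satisfied.

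Putting these together, the proof is a two-line chain: Theorem~\ref{thm:property} (under the standing Assumption~\ref{assu:1} that the IFG is acyclic) shows that the DIFT-APT game satisfies Assumption~\ref{assp:ARNE_asmp}, i.e.\ it is unichain; then Proposition~\ref{prop:existance_01} applied to this game yields the existence of an ARNE, which is the claim of Corollary~\ref{cor:exist_Gamma}. I do not anticipate any genuine obstacle here, since all the analytic work has been front-loaded into Theorem~\ref{thm:property}; the only point requiring a word of care is the quantifier matching, namely noting that the theorem's conclusion ``for any $(\pi_{D}, \pi_{A})$'' covers the deterministic policies demanded by Assumption~\ref{assp:ARNE_asmp}. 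One might also remark, for completeness, that the two-player game is the $K=2$ instance of the general $K$-player framework of Section~\ref{sec:DefResults}, so Proposition~\ref{prop:existance_01} applies verbatim.
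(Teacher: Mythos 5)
Your proposal is correct and follows essentially the same route as the paper: both deduce Assumption~\ref{assp:ARNE_asmp} from part~1) of Theorem~\ref{thm:property} and then invoke Proposition~\ref{prop:existance_01}. Your extra remark that the theorem's ``any policy pair'' quantifier in particular covers the deterministic policies required by Assumption~\ref{assp:ARNE_asmp} is a small but welcome precision that the paper leaves implicit.
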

\begin{proof}
	From the condition~1) in Theorem~\ref{thm:property} the DIFT-APT game has a single recurrent class of states in $\mathbf{P}(\pi_{D}, \pi_{A})$ corresponding to any policy pair $(\pi_{D}, \pi_{A})$. As a result Assumption~\ref{assp:ARNE_asmp} holds for DIFT-APT game. Therefore by Proposition~\ref{prop:existance_01} there exits an ARNE in DIFT-APT game.
\end{proof}


The corollary below gives a necessary and sufficient condition for characterizing an ARNE of DIFT-APT game. Our algorithm for computing ARNE is based on this condition. 
 
\begin{cor}\label{cor:conditions}
	The following conditions characterizes the ARNE of DIFT-APT game.
	\begin{subequations}
		\begin{equation}\label{eq:GNScon1}
		\rho_{k} + v_{k}(s) \geq  r_{k}(s,a_{k},\pi_{-k}) + \sum\limits_{s' \in {\bf S}}\mathbf{P}(s'|s,a_{k},\pi_{-k})v_{k}(s'),
		\end{equation}
		\begin{equation}\label{eq:GNScon2}
		\begin{split}
		\sum\limits_{k \in \{D, A\}}&\sum\limits_{s \in {\bf S}}\sum\limits_{a_{k} \in \A_{k}(s)}\Big(\rho_{k}  + v_{k}(s) - r_{k}(s,a_{k},\pi_{-k})  \\ &- \sum\limits_{s' \in {\bf S}}\mathbf{P}(s'|s,a_{k},\pi_{-k})v_{k}(s')\Big)\pi_{k}(s,a_{k}) = 0,
		\end{split}
		\end{equation}
		\begin{equation}\label{eq:GNScon3}
		\sum\limits_{a_{k} \in \A_{k}(s)}\pi_{k}(s,a_{k}) = 1, ~~~ \pi_{k}(s,a_{k}) \geq 0,
		\end{equation}
		where $\rho_{k}$ denotes the average reward value of player $k$ independent of initial state of the game.
	\end{subequations}
\end{cor}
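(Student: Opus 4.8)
The plan is to obtain Corollary~\ref{cor:conditions} as a direct specialization of Proposition~\ref{prop:Nes&Suf_01} (Sobel's necessary and sufficient condition), using the structural result of Theorem~\ref{thm:property} to eliminate the auxiliary multipliers and to remove the state-dependence of the average reward. First I would note that Theorem~\ref{thm:property} guarantees $\mathbf{P}(\pi_{D}, \pi_{A})$ has exactly one recurrent class for every deterministic policy pair, so Assumption~\ref{assp:ARNE_asmp} holds and Proposition~\ref{prop:Nes&Suf_01} applies to the DIFT-APT game with $K = 2$ and $k \in \{D, A\}$. The crucial consequence of the \emph{single} recurrent class is that, by Proposition~\ref{prop:Average_reward_results} with $L = 1$, the average reward payoff is constant across the entire state space: for every recurrent state $\rho_{k}(s, \pi) = \rho_{k}^{1}$, and for every transient state $\rho_{k}(s,\pi) = q_{1}(s)\rho_{k}^{1} = \rho_{k}^{1}$ since the unique recurrent class is reached with probability one. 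I therefore set $\rho_{k} := \rho_{k}^{1}$, independent of the initial state, matching the $\rho_{k}$ appearing in \eqref{eq:GNScon1}--\eqref{eq:GNScon2}.

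Next I would feed this state-independence into the Sobel conditions \eqref{subeq:ARNEcon3}--\eqref{subeq:ARNEcon2}. Substituting $\rho_{k}(s') = \rho_{k}$ into \eqref{subeq:ARNEcon4} and using $\sum_{s'} \mathbf{P}(s'|s,a_{k},\pi_{-k}) = 1$ collapses its right-hand side to $\rho_{k}$, which forces $\mu_{k}^{s,a_{k}} = 0$ for all $s$, $a_{k}$, $k$; hence \eqref{subeq:ARNEcon4} is automatically satisfied and every $\mu$-term drops out of the complementary-slackness equation \eqref{subeq:ARNEcon5}. Solving \eqref{subeq:ARNEcon3} for the remaining multiplier gives $\lambda_{k}^{s,a_{k}} = \rho_{k} + v_{k}(s) - r_{k}(s,a_{k},\pi_{-k}) - \sum_{s'}\mathbf{P}(s'|s,a_{k},\pi_{-k})v_{k}(s')$. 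The sign constraint $\lambda_{k}^{s,a_{k}} \geq 0$ from \eqref{subeq:ARNEcon1} is then precisely the inequality \eqref{eq:GNScon1}, and substituting this expression for $\lambda_{k}^{s,a_{k}}$ (with $\mu_{k}^{s,a_{k}} = 0$) into \eqref{subeq:ARNEcon5} yields exactly the identity \eqref{eq:GNScon2}. Finally, the simplex constraint \eqref{subeq:ARNEcon2} together with $\pi_{k}(s,a_{k}) \geq 0$ from \eqref{subeq:ARNEcon1} reproduces \eqref{eq:GNScon3}. Since every manipulation is an equivalence, both the necessity and sufficiency carried by Proposition~\ref{prop:Nes&Suf_01} are preserved, establishing the claimed characterization.

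The main obstacle, and the only genuinely non-mechanical step, is justifying that the average reward is truly constant over \emph{all} states rather than merely over the recurrent class. This is exactly where Theorem~\ref{thm:property}(2) is needed: because the unique recurrent class contains $s_0$ and every state reaches it with probability one, the convex-combination formula of Proposition~\ref{prop:Average_reward_results}(2) degenerates to the single term $\rho_{k}^{1}$. Without the single-recurrent-class guarantee one would retain distinct values $\rho_{k}^{l}$, the multiplier $\mu_{k}^{s,a_{k}}$ would not vanish, and \eqref{subeq:ARNEcon4} could not be discarded. Once state-independence is in hand, the remainder is a routine elimination of the Lagrange-type variables $\lambda$ and $\mu$.
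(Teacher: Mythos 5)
Your proposal is correct and follows essentially the same route as the paper's own proof: invoke Theorem~\ref{thm:property} to get the unichain property, use Proposition~\ref{prop:Average_reward_results} to make $\rho_{k}$ state-independent, deduce $\mu_{k}^{s,a_{k}}=0$ from \eqref{subeq:ARNEcon4}, solve \eqref{subeq:ARNEcon3} for $\lambda_{k}^{s,a_{k}}$, and substitute into \eqref{subeq:ARNEcon5} and the sign/simplex constraints. Your additional remark that the convex-combination formula for transient states degenerates because $q_{1}(s)=1$ is a small but welcome clarification the paper leaves implicit.
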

\begin{proof}
	By Proposition~\ref{prop:Nes&Suf_01}, ARNE of an unichain stochastic game is characterized by conditions \eqref{subeq:ARNEcon3}-\eqref{subeq:ARNEcon2}. The condition \eqref{subeq:ARNEcon3} reduce to \eqref{eq:GNScon1} by substituting $\lambda_{k}^{s,a_{k}} \geq 0$ from  condition \eqref{subeq:ARNEcon1}. Below is the argument for condition \eqref{subeq:ARNEcon4}. 
	
	From Theorem~\ref{thm:property}, the MC induced by $(\pi_{D}, \pi_{A})$, $\mathbf{P}(\pi_{D}, \pi_{A})$, contains only a single recurrent class. As a consequence, from Proposition~\ref{prop:Average_reward_results}, ${\rho}_{k}(s, \pi) = \rho_{k}$ for all $s \in {\bf S}$ and $k \in \{D, A\}$. 
	Thus condition~\eqref{subeq:ARNEcon4} in Proposition~\ref{prop:Nes&Suf_01} reduces to 
	\begin{eqnarray*}
	{\rho}_{k} - \mu_{k}^{s,a_{k}} = \sum\limits_{s' \in {{{\bf S}}}}\mathbf{P}(s'|s,a_{k},{\pi}_{-k})\rho_{k} 
	= \rho_{k} \sum\limits_{s' \in {{{\bf S}}}}\mathbf{P}(s'|s,a_{k},{\pi}_{-k}) = \rho_{k}
	\end{eqnarray*}
Thus, $\mu_{k}^{s,a_{k}} = 0$. Since ${\rho}_{k}(s, \pi) = \rho_{k}$, condition \eqref{subeq:ARNEcon3} in Proposition~\ref{prop:Nes&Suf_01} becomes
\begin{eqnarray}\label{eq:lambda_sub}
\lambda_{k}^{s,a_{k}}\hspace{-1mm} = \hspace{-0.5mm} \rho_{k}  \hspace{-0.25mm}+ \hspace{-0.25mm}v_{k}(s) \hspace{-0.25mm} - \hspace{-0.25mm}r_{k}(s,a_{k},\pi_{-k}) \hspace{-0.25mm} - \hspace{-1.5mm}\sum\limits_{s' \in {\bf S}}\mathbf{P}(s'|s,a_{k},\pi_{-k})v_{k}(s').
\end{eqnarray}
By substituting $\mu_{k}^{s,a_{k}} = 0$ and $\lambda_{k}^{s,a_{k}}$ from Eqn.~\eqref{eq:lambda_sub},  condition \eqref{subeq:ARNEcon5} reduces to \eqref{eq:GNScon2}. Finally,  conditions \eqref{subeq:ARNEcon1} and \eqref{subeq:ARNEcon2} together reduce to \eqref{eq:GNScon3}. Thus conditions \eqref{eq:GNScon1}-\eqref{eq:GNScon3} characterizes an ARNE in DIFT-APT game.
\end{proof}

\section{Design and Analysis of RL-ARNE Algorithm}\label{sec:Algorithm}

In this section we present a RL algorithm that learns ARNE in DIFT-APT game.  
\vspace{-3mm}
\subsection{RL-ARNE: Reinforcement Learning Algorithm for Computing Average Reward Nash Equilibrium}
Algorithm~\ref{algo} presents the pseudocode of   RL-ARNE, a stochastic approximation-based algorithm with multiple time scales that computes an ARNE in DIFT-APT game.  The necessary and sufficient condition given in Corollary~\ref{cor:conditions} is used to find an ARNE policy pair $(\pi^\*_{\sD}, \pi^\*_{\sA})$ in Algorithm~\ref{algo}.  
\begin{center}
	\begin{algorithm}[h]
		\caption{RL-ARNE Algorithm of DIFT-APT game}
		\label{algo:Stack}
		\begin{algorithmic}[1]
			\State \textbf{Input:} State space (${\mathbf{S}}$), transition structure  ($\mathbf{P}$), rewards  ($\rD$ and $\rA$), number of iterations ($I >> 0$)
			\State \textbf{Output:} ARNE policies, $(\pi^\*_{\sD}, \pi^\*_{\sA}) \leftarrow (\boldsymbol{\pi}_{\sD}^{I}, \boldsymbol{\pi}_{\sA}^{I})$
			\State \textbf{Initialization: } $n \leftarrow 0$, $v^{0}_{k} \leftarrow 0$, $\rho^{0}_{k} \leftarrow 0$, $\epsilon^{0}_{k} \leftarrow 0$, $\pi^{0}_{k}~\leftarrow~\boldsymbol{\pi}_{k}$ for $k \in \{D, A\}$ and $s \leftarrow s_0$.
			\While  {$n \leqslant I$} 
			\State Draw $d$ from $\pD^{n}(s)$ and $a$ from $\pA^{n}(s)$
			\State Reveal the next state $s'$ according to $\mathbf{P}$
			\State Observe the rewards $r_{D}(s,d,a,s')$ and $r_{A}(s,d,a,s')$
			\For {$k \in \{D, A\}$}
			\State $\hspace*{-5 mm} v^{n+1}_{k}(s) = v^{n}_{k}(s)+\delta_{v}^{n}[r_{k}(s,d,a,s') - \rho^{n}_{k} + v^{n}_{k}(s') - v^{n}_{k}(s)]$ \label{eq:v_itr}
			\State $\hspace*{-5 mm}\rho_{k}^{n+1}  =   \rho_{k}^{n} + \delta_{\rho}^{n}\hspace*{-0.5 mm}\Big[\frac{n\rho^{n}_{k} + r_{k}(s,d,a,s')}{n+1} -    \rho_{k}^{n}\Big]$ \label{eq:rho_itr}
			\State $\hspace*{-5 mm} \epsilon_{k}^{n+1}(s,a_{k}) \hspace*{-0.5 mm} = \hspace*{-0.5 mm}  \epsilon_{k}^{n}(s,a_{k}) + \delta_{\epsilon}^{n} \big[ \sum_{k \in \{D,A\}} (r_{k}(s,d,a,s') - \hspace*{26 mm}  \rho^{n}_{k} + v^{n}_{k}(s')- v^{n}_{k}(s))  - \epsilon_{k}^{n}(s,a_{k}) \big]$ \label{eq:grad_itr}
			\State $\hspace*{-5 mm} \pi_{k}^{n+1}(s,a_{k}) \hspace*{-0.75 mm} = \hspace*{-0.75 mm} \Gamma(\pi_{k}^{n}(s,a_{k}) \hspace*{-0.75 mm} - \delta_{\pi}^{n}\sqrt{\pi_{k}^{n}(s,a_{k})} \big|r_{k}(s,d,a,s')  -\hspace*{26 mm} \rho^{n}_{k} + v^{n}_{k}(s')- v^{n}_{k}(s)\big|\text{sgn}(-\epsilon_{k}^{n}(s,a_{k})))$ \label{eq:policy_itr}
			\EndFor
			\State Update the state of DIFT-APT game: $s \leftarrow s'$
			\State $ n \leftarrow n + 1$
			\EndWhile 
		\end{algorithmic}\label{algo}
	\end{algorithm}
	\vspace*{-3 mm}
\end{center}

Using stochastic approximation, iterates in lines~9  and~10 compute the value functions $v^{n}_{k}(s)$, at each state $s\in {\bf S}$, and average rewards $\rho_{k}^{n}$ of DIFT and APT corresponding to policy pair $(\pi_{\sD}^{n}, \pi_{\sA}^{n})$, respectively.  The iterates, $\epsilon_{k}^{n}(s,a_{k})$ in line~11 and $\pi_{k}^{n}(s,a_{k})$ in line~12, are chosen such that Algorithm~\ref{algo} converges to an ARNE of the DIFT-APT game. We present below the  outline of our approach. 

Let $\Omega_{k, \pi_{-k}}^{s, a_{k}}$ and $\Delta(\pi)$ be defined as
 \begin{eqnarray}\label{eq:Omega}
\hspace{-5mm}\Omega_{k, \pi_{-k}}^{s, a_{k}} \hspace{-4.5mm}&=& \hspace{-3.5mm}\rho_{k} \hspace{-0.5mm}+\hspace{-0.5mm} v_{k}(s) \hspace{-0.5mm}- \hspace{-0.5mm} r_{k}(s,a_{k},\pi_{-k})\hspace{-0.5mm} -\hspace{-2mm} \sum\limits_{s' \in {\bf S}}\hspace{-0.5mm}\mathbf{P}(s'|s,a_{k},\pi_{-k})v_{k}(s') \\
\hspace{-5mm}\Delta(\pi) &=& \sum\limits_{k \in \{D,~A\}}\sum\limits_{s \in {\bf S}}\sum\limits_{a_{k} \in \A_{k}(s)} \Omega_{k, \pi_{-k}}^{s, a_{k}} \pi_{k}(s,a_{k}). \label{eq:Delta}
 \end{eqnarray}

 In Theorem~\ref{thm:policy ARNE} we prove that all the policies $(\pi_{D}, \pi_{A})$ such that $\Omega_{k, \pi_{-k}}^{s, a_{k}} < 0$ forms an unstable equilibrium point of the ODE associated with the iterates $\pi_{k}^{n}(s,a_{k})$. Hence, Algorithm~\ref{algo} will not converge to such policies. Consider a policy pair $(\pi_{\sD}, \pi_{\sA})$ such that $\Omega_{k, \pi_{-k}}^{s, a_{k}} \geq 0$.  Note that, by Eqn.~\eqref{eq:Delta}, such a policy pair satisfies $\Delta(\pi) \geq 0$. When $\Delta(\pi) >0$, Algorithm~\ref{algo} updates  the policies of players in a descent direction of $\Delta(\pi)$ to achieve ARNE (i.e., $\Delta(\pi) = 0$). 

Let the gradient of $\Delta(\pi)$ with respect to policies $\pi_D$ and $\pi_A$ be $\frac{\partial\Delta(\pi)}{\partial\pi}$, where $\pi = (\pi_D, \pi_A)$. Then for each $k \in \{D, A\}$, $s \in {\bf S}$, and $a_{k} \in \A_{k}(s)$, $\frac{\partial\Delta(\pi)}{\partial\pi_{k}(s,a_{k})} = \sum\limits_{\bar{k} \in \{D,A\}} \Omega_{\bar{k}, \pi_{-k}}^{s, a_{k}}$ represents each component of $\frac{\partial\Delta(\pi)}{\partial\pi}$. Lemma~\ref{lem:gradient} in Appendix shows the derivation of $\frac{\partial\Delta(\pi)}{\partial\pi_{k}(s,a_{k})}$. Notice that computation of $\frac{\partial\Delta(\pi)}{\partial\pi_{k}(s,a_{k})}$ requires the values of $\mathbf{P}$ which is assumed to be unknown in DIFT-APT game. Therefore the iterate $\epsilon_{k}^{n}(s,a_{k})$ in line~11 of Algorithm~\ref{algo} estimates $\frac{\partial\Delta(\pi)}{\partial\pi_{k}(s,a_{k})}$ using stochastic approximation. Convergence of $-\epsilon_{k}^{n}(s,a_{k})$ to $\frac{\partial\Delta(\pi)}{\partial\pi_{k}(s,a_{k})}$ is proved in Theorem~\ref{thm:gradient_convergence}. 

Additionally, in line~12 of Algorithm~\ref{algo}, the map $\Gamma$ projects the policies to probability simplex defined by  condition~\eqref{eq:GNScon3} in Corollary~\ref{cor:conditions}. Here, $\vert \cdot \vert$ denotes the absolute value. The function $\text{sgn}(\chi)$ denotes the continuous version of the standard sign function (e.g., $\text{sgn}(\chi) = \tanh(c\chi)$ for any constant $c > 1$). Lemma~\ref{lem:valid-descent} shows that the policy iterates in line~12 update  in a valid descent direction of $\Delta(\pi)$ and Theorem~\ref{thm:policy-convergence} proves the convergence. Theorem~\ref{thm:policy ARNE} then shows that the converged policies indeed form an ARNE.

Note that the value function iterates in line~9 and the gradient estimate iterates in line~11 of Algorithm~\ref{algo} update in a same faster time scale $\delta_{v}^{n}$ and  $\delta_{\epsilon}^{n}$, respectively. Policy iterates in line~12 update in a slower time scale $\delta_{\pi}^{n}$. Also average reward payoff iterates in line~10 update in an intermediate time scale $\delta_{\rho}^{n}$. Hence  the step-sizes of the proposed algorithm are chosen such that $\delta_{v}^{n} =  \delta_{\epsilon}^{n}  >> \delta_{\rho}^{n} >> \delta_{\pi}^{n}$. Furthermore, the step-sizes must also satisfy the conditions in Assumption~\ref{assmp:step-size}. Due to time scale separation, iterations in relatively faster time scales see iterations in relatively slower times scales as quasi-static  while the latter sees former as nearly equilibrated \cite{borkar2009stochastic}. 
\vspace{-3mm}

 \begin{remark}
    Note that, RL-ARNE algorithm presented in Algorithm~VI.1 must be trained offline due to the information exchange that is required at line~11 of the algorithm. Here, players are required to exchange the information about their respective temporal difference error estimates, $\tilde{\phi}_{k}(\rho^{n}_{k}, v^{n}_{k}) = r_{k}(s,d,a,s')  - \rho^{n}_{k} + v^{n}_{k}(s')- v^{n}_{k}(s)$, as the iterates on each player's gradient estimation includes the term $\sum_{k \in \{D, A\}}\tilde{\phi}_{k}(\rho^{n}_{k}, v^{n}_{k})$. 
Since RL-ARNE algorithm is trained offline and the policies found at the end of the training only depend on their respective actions, players do not require any information exchange on their respective actions when they execute their learned policies in real-time.
 \end{remark}

\subsection{Convergence Proof of the RL-ARNE Algorithm}\label{subsec:convergence}

First rewrite iterations in line 9 and line 10 as Eqn.~\eqref{eq:v_itr_mod} and Eqn.~\eqref{eq:rho_itr_mod} to show the convergence of value and average reward payoff iterates in Algorithm~\ref{algo}. 
\begin{eqnarray}
v^{n+1}_{k}(s) &=& v^{n}_{k}(s)+\delta_{v}^{n}[F(v^{n}_{k}, \rho^{n}_{k})(s) - v^{n}_{k}(s) + w_{v}^{n}]   \label{eq:v_itr_mod}\\
\rho_{k}^{n+1} &=& \rho_{k}^{n} + \delta_{\rho}^{n} [ G(\rho_{k}^{n})- \rho_{k}^{n} + w_{\rho}^{n}] \label{eq:rho_itr_mod}
\end{eqnarray}
For brevity we use $\pi(s,d,a) = \pi_{D}(s,d)\pi_{A}(s,a)$ and $\pi$ to denote  $(\pi_{D}, \pi_{A})$.
Then, from Eqn.~\eqref{eq:Pssda}, $${\bf P}(s'|s,\pi) = \hspace*{0mm}\sum\limits_{d \in \AD(s)}\sum\limits_{a \in \AA(s) } \pi(s,d,a){\bf P}(s'|s,d,a).$$ Two function maps  $F(v^{n}_{k})(s)$ and $G(\rho_{k}^{n})$ are defined as
\begin{eqnarray}
\hspace*{-5mm}F(v^{n}_{k},\rho^{n}_{k})(s) \hspace*{-2mm}&=& \hspace*{-2mm}\sum\limits_{s' \in {\bf S}}\mathbf{ P}(s'|s,\pi) [ r_{k}(s,d,a,s') - \rho^{n}_{k} + v^{n}_{k}(s')]\label{eq:map_F},\\ 
\hspace*{-5mm}G(\rho_{k}^{n})  \hspace*{-2mm}&=&\hspace*{-2mm} \sum\limits_{s' \in {\bf S}}\mathbf{ P}(s'|s,\pi) \Big[ \frac{n\rho^{n}_{k} + r_{k}(s,d,a,s')}{n+1} \Big].\label{eq:map_G}
\end{eqnarray}

The zero mean noise parameters $w_{v}^{n}$ and $w_{\rho}^{n}$ are defined as
\begin{eqnarray}
w_{v}^{n} &=& r_{k}(s,d,a,s') - \rho^{n}_{k} + v^{n}_{k}(s') - F(v^{n}_{k},\rho^{n}_{k})(s) \label{eq:noise_v},\\
w_{\rho}^{n} &=& \frac{n\rho^{n}_{k} + r_{k}(s,d,a,s')}{n+1}  - G(\rho_{k}^{n}) \label{eq:noise_rho}.
\end{eqnarray}

Let ${v}_{k}  = [v_{k}(s)]_{s \in {\bf S}}$. Then the ODE associated with the iterates given in Eqn.~\eqref{eq:v_itr_mod} corresponding to all $s \in {\bf S}$ and the ODE associated with the iterate in Eqn.~\eqref{eq:rho_itr_mod} are as follows. 
\begin{eqnarray}
\dot{v}_{k} &=& f(v_{k},\rho_{k}) \label{eq:v_itr_ODE}\\
\dot{\rho}_{k} &=& g(\rho_{k}) \label{eq:rho_itr_ODE},
\end{eqnarray}
where $f: \mathcal{R}^{|{\bf S}|} \rightarrow \mathcal{R}^{|{\bf S}|}$ is such that $f(v_{k},\rho_{k}) = F(v_{k},\rho_{k}) - v_{k}$, where $F(v_{k},\rho_{k}) =[F(v_{k},\rho_{k})(s)]_{s \in {\bf S}}$ and $g: \mathcal{R} \rightarrow \mathcal{R}$ is defined as $g(\rho_{k}) = G(\rho_{k}) - \rho_{k}$.

We note that, in Algorithm~VI.1,  value function iterates ($v_k^{n}(s)$) runs in a relatively faster time scale compared to the average reward iterates ($\rho_k^n$). As a consequence, $v_k^{n}(s)$ iterates see $\rho_k^n$ as quasi-static. Hence, for brevity, in the proofs of Lemma~VI.2, Lemma~VI.5, and Theorem~VI.7 we represent $f(v_{k},\rho_{k})$ and $F(v^{n}_{k}, \rho^{n}_{k})(s)$ as $f(v_{k})$ and $F(v^{n}_{k})(s)$, respectively.  

A set of lemmas that are used to prove the convergence of the iterates in lines~9 and~10 of Algorithm~\ref{algo} are given below.
Lemma~\ref{lem:Lipschitz} presents a property of the ODEs in Eqns.~\eqref{eq:v_itr_ODE} and \eqref{eq:rho_itr_ODE}.

\begin{lemma}\label{lem:Lipschitz}
	Consider the ODEs $\dot{v}_{k} = f(v_{k},\rho_{k}) $ and $\dot{\rho}_{k} = g(\rho_{k})$. Then the functions $f(v_{k},\rho_{k})$ and $g(\rho_{k})$ are Lipschitz.
\end{lemma}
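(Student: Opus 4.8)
The plan is to exploit the fact that, once $\rho_k$ is frozen as quasi-static (as the text notes, the $v_k$-iterate runs on a faster time scale than the $\rho_k$-iterate), both $f$ and $g$ are \emph{affine} maps of their respective arguments, and affine maps with bounded coefficients are automatically Lipschitz. So there is no genuine analytic obstacle; the content lies in recognizing the affine structure and in checking that the coefficients are bounded.

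First I would rewrite $F$ in matrix--vector form. Treating $\rho_k$ as a constant, the summand in $F(v_k)(s) = \sum_{s' \in {\bf S}}\mathbf{P}(s'|s,\pi)[r_k(s,d,a,s') - \rho_k + v_k(s')]$ splits into the linear term $\sum_{s'}\mathbf{P}(s'|s,\pi)v_k(s')$ and the term $c_k(s) := \sum_{s'}\mathbf{P}(s'|s,\pi)[r_k(s,d,a,s') - \rho_k]$, which does not depend on $v_k$. Collecting over all $s \in {\bf S}$ gives $F(v_k) = \mathbf{P}(\pi)\,v_k + c_k$, where $\mathbf{P}(\pi)$ is the row-stochastic transition matrix of the induced Markov chain. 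Hence $f(v_k) = F(v_k) - v_k = (\mathbf{P}(\pi) - I)v_k + c_k$ is affine, and for any $v_k, \tilde{v}_k$ we get $\| f(v_k) - f(\tilde{v}_k)\| = \|(\mathbf{P}(\pi) - I)(v_k - \tilde{v}_k)\| \leq \|\mathbf{P}(\pi) - I\|\,\|v_k - \tilde{v}_k\|$. The constant $\|\mathbf{P}(\pi) - I\|$ is finite because $\mathbf{P}(\pi)$ is stochastic (entries in $[0,1]$, rows summing to $1$) on the finite state space ${\bf S}$, so $f$ is Lipschitz.

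For $g$ I would substitute $\sum_{s'}\mathbf{P}(s'|s,\pi) = 1$ into $G(\rho_k) = \sum_{s'}\mathbf{P}(s'|s,\pi)\frac{n\rho_k + r_k(s,d,a,s')}{n+1}$, which collapses to $G(\rho_k) = \frac{n}{n+1}\rho_k + \frac{1}{n+1}\bar{r}_k$, where $\bar{r}_k := \sum_{s'}\mathbf{P}(s'|s,\pi)r_k(s,d,a,s')$ is a constant. Thus $g(\rho_k) = G(\rho_k) - \rho_k = -\frac{1}{n+1}\rho_k + \frac{1}{n+1}\bar{r}_k$ is affine in the scalar $\rho_k$ with slope $-1/(n+1)$, whose magnitude is at most $1$ for every $n \geq 0$; hence $g$ is Lipschitz with a constant independent of $n$. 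The only point requiring care is the boundedness of the coefficients, which rests on the finiteness of ${\bf S}$ and of the action sets $\AD(s), \AA(s)$ (ensuring the rewards $r_k(s,d,a,s')$ are bounded) together with $\mathbf{P}(\pi)$ being stochastic; I expect this boundedness bookkeeping to be the only mildly fiddly part of an otherwise routine argument.
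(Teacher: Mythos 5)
Your proposal is correct and follows essentially the same route as the paper: both arguments rest on the observation that $f$ is affine in $v_k$ with linear part $\mathbf{P}(\pi)-I$ (the paper just carries this out componentwise in the $\ell_1$-norm via the triangle inequality and row-stochasticity, obtaining the constant $|{\bf S}|+1$, rather than packaging it as a matrix norm) and that $g$ is affine in $\rho_k$. Your computation for $g$ is in fact slightly more careful than the paper's, which writes $\bigl|\tfrac{n}{n+1}(\rho_k-\bar{\rho}_k)-(\rho_k-\bar{\rho}_k)\bigr| = |\rho_k-\bar{\rho}_k|$ where the correct value is $\tfrac{1}{n+1}|\rho_k-\bar{\rho}_k|$; either way the Lipschitz constant is at most $1$, so the conclusion is unaffected.
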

 \begin{proof}
 	First we show $f(v_{k})$ is Lipschitz. Consider two distinct value vectors $v_{k}$ and $\bar{v}_{k}$. Then, 
 	\begin{eqnarray}
 	\hspace{-4.5mm}\parallel f(v_{k}) - f(\bar{v}_{k}){\parallel}_{1} \hspace{-3mm}&=& \hspace{-3mm}\parallel [ F(v_{k}) - F(\bar{v}_{k})] - [v_{k}- \bar{v}_{k}] {\parallel}_{1} \nonumber\\
 	\hspace{-4.5mm}\hspace{-3mm}&\leq& \hspace{-3mm}\parallel F(v_{k}) - F(\bar{v}_{k}) {\parallel}_{1} + \parallel v_{k}- \bar{v}_{k} {\parallel}_{1} \nonumber\\
 	\hspace{-4.5mm}\hspace{-3mm}&=& \hspace{-3mm}\hspace{-1mm}\sum\limits_{s \in {\bf S}}\Big| F(v_{k})(s) - F(\bar{v}_{k})(s) \Big| + \hspace{-1mm}\parallel v_{k}- \bar{v}_{k} {\parallel}_{1}. \label{eq:v_lip}
 	\end{eqnarray}
Notice that,
\begin{equation*}
\begin{split}
\sum\limits_{s \in {\bf S}}\Big| F(v_{k})(s) - F(\bar{v}_{k})(s) \Big| &=  \sum\limits_{s \in {\bf S}}\left|\sum\limits_{s' \in {\bf S}}{\bf P}(s'|s,\pi)[v_{k}(s') - \bar{v}_{k}(s')]\right|  \\
 &\leq \sum\limits_{s \in {\bf S}}\sum\limits_{s' \in {\bf S}} {\bf P}(s'|s,\pi)\left|v_{k}(s')-\bar{v}_{k}(s')\right|  
\end{split}
\end{equation*} 
\begin{equation*}
\begin{split}
 &\leq \sum\limits_{s \in {\bf S}}\sum\limits_{s' \in {\bf S}} \left|v_{k}(s')-\bar{v}_{k}(s')\right|  \\
 &= \sum\limits_{s \in {\bf S}}\parallel v_{k}-\bar{v}_{k}{\parallel}_{1} = |{\bf S}|\parallel v_{k}-\bar{v}_{k}{\parallel}_{1}.
\end{split}
\end{equation*} 
The inequalities in the above equations are followed by the triangle inequality and observing the fact that $\max\{{\bf P}(s'|s,\pi)\} =~1$.  Then from Eqn.~\eqref{eq:v_lip}, 
\begin{equation*}
\parallel f(v_{k}) - f(\bar{v}_{k}){\parallel}_{1} \leq (|{\bf S}| + 1)\parallel v_{k}-\bar{v}_{k}{\parallel}_{1}.
\end{equation*}
Hence $f(v_{k})$ is Lipschitz. Next we prove $g(\rho_{k})$ is Lipschitz. Let $\rho_{k}$ and $\bar{\rho}_{k}$ be two distinct average payoff values. Then, 
 \begin{equation*}
 \begin{split}
 \left| g(\rho_{k}) - g(\bar{\rho}_{k}) \right|  &= \left|\frac{n}{n+1} [ \rho_{k}- \bar{\rho}_{k}] - [\rho_{k}- \bar{\rho}_{k}] \right| = \left| \rho_{k}- \bar{\rho}_{k} \right|.
 \end{split}
 \end{equation*}
 Therefore  $g(\rho_{k})$ is Lipschitz.
 \end{proof}

Lemma~\ref{lem:v_pcont} shows the map $F(v^{n}_{k}) = [F(v^{n}_{k})(s)]_{s \in {\bf S}}$ is a pseudo-contraction with respect to some weighted sup-norm. The definitions of weighted sup-norm and pseudo-contraction  are given below.
\begin{defn}[Weighted sup-norm]\label{def:WSN}
	Let $|| b ||_{\epsilon}$ denote the weighted sup-norm of a vector $b \in \mathcal{R}^{m_b}$ with respect to the vecor $\epsilon \in \mathcal{R}^{m_b}$. Then, 
	\begin{equation*}
	|| b ||_{\epsilon} = \max_{q = 1, \ldots, n} \frac{|b(q)|}{\epsilon(q)},
	\end{equation*}
	where $|b(q)|$ represent the absolute value of the $q^{\text{th}}$ entry of vector $b$.
\end{defn}
\begin{defn}[Pseudo contraction]
	Let $c, \bar{c} \in \mathcal{R}^{m_c}$. Then a function $\phi: \mathcal{R}^{m_c} \rightarrow \mathcal{R}^{m_c} $ is said to be a pseudo contraction with respect to the vector $\gamma \in \mathcal{R}^{m_c}$ if and only if, 
	$$\parallel \phi(c) - \phi(\bar{c}) \parallel_{\gamma} \leq \eta \parallel c - \bar{c} \parallel_{\gamma},~\mbox{where}~ 0 \leq \eta < 1 .$$
	
\end{defn}
\begin{lemma}\label{lem:v_pcont}
Consider $F(v^{n}_{k},\rho^{n}_{k})(s)$ defined in Eqn.~\eqref{eq:map_F}. Then the function map $F(v^{n}_{k},\rho^{n}_{k}) = [F(v^{n}_{k},\rho^{n}_{k})(s)]_{s \in {\bf S}}$ is a pseudo-contraction with respect to some weighted sup-norm.
\end{lemma}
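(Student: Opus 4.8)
The plan is to reduce the claim to a statement about the linear part of $F$, namely the induced transition matrix $\mathbf{P}(\pi)$, and then exploit the reachability of the reference state $s_0$ established in Theorem~\ref{thm:property}. First I would subtract two instances of the map. Since the reward term $r_{k}(s,d,a,s')$ and the (quasi-static) average reward $\rho^{n}_{k}$ do not depend on the value vector, they cancel, leaving
\begin{equation*}
F(v_{k})(s) - F(\bar{v}_{k})(s) = \sum_{s' \in {\bf S}} \mathbf{P}(s'|s,\pi)\,[v_{k}(s') - \bar{v}_{k}(s')],
\end{equation*}
so that $F(v_{k}) - F(\bar{v}_{k}) = \mathbf{P}(\pi)(v_{k} - \bar{v}_{k})$. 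Thus $F$ is affine with linear part the stochastic matrix $\mathbf{P}(\pi)$, and the whole problem reduces to exhibiting a weighted sup-norm in which $\mathbf{P}(\pi)$ contracts with some modulus $\eta < 1$.

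The key difficulty, which I expect to be the main obstacle, is that a stochastic matrix is only \emph{non-expansive} in the ordinary sup-norm: it fixes the direction $\mathbf{1}$, and indeed averaging $\mathbf{P}(\pi)\xi - \xi$ against the stationary distribution shows that no weighted sup-norm can make the full stochastic matrix a strict contraction. I would resolve this by singling out $s_0$ as a reference (terminal) state. By Theorem~\ref{thm:property}, $s_0$ lies in the unique recurrent class and is reachable from every state; moreover, acyclicity of the IFG (Assumption~\ref{assu:1}) forces every sample path to reach a final-destination state, and hence $s_0$, within a bounded number of steps. Deleting the row and column of $s_0$ from $\mathbf{P}(\pi)$ yields a substochastic matrix $Q$ on ${\bf S}\setminus\{s_0\}$ whose powers satisfy $Q^{m} \to 0$, i.e., $Q$ has spectral radius strictly below one and $I - Q$ is invertible.

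With $Q$ in hand I would invoke the standard construction of a weighted sup-norm contraction for transient substochastic matrices. The vector $\xi := (I-Q)^{-1}\mathbf{1}$ (the expected hitting times of $s_0$) is finite and strictly positive, with $\xi(s) \geq 1$, and it satisfies $Q\xi = \xi - \mathbf{1}$ componentwise. Hence, with the weighted sup-norm $\|\cdot\|_{\xi}$ of Definition~\ref{def:WSN},
\begin{equation*}
\|Q\|_{\xi} = \max_{s \neq s_0} \frac{(Q\xi)(s)}{\xi(s)} = \max_{s \neq s_0}\Big(1 - \frac{1}{\xi(s)}\Big) = 1 - \frac{1}{\max_{s}\xi(s)} =: \eta < 1,
\end{equation*}
where finiteness of $\max_{s}\xi(s)$ uses that ${\bf S}$ is finite.

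To close, I would connect $Q$ back to $F$ by anchoring the differential value at the reference state, i.e., restricting attention to value vectors with $v_{k}(s_0) = \bar{v}_{k}(s_0)$, exactly as in the relative value iteration that the algorithm effectively implements once $\rho^{n}_{k}$ has equilibrated. Under this anchoring the $s' = s_0$ term drops out of the displayed difference and $F(v_{k}) - F(\bar{v}_{k}) = Q(v_{k} - \bar{v}_{k})$ on ${\bf S}\setminus\{s_0\}$, whence $\|F(v_{k}) - F(\bar{v}_{k})\|_{\xi} \leq \eta\,\|v_{k} - \bar{v}_{k}\|_{\xi}$ with $\eta < 1$, establishing the pseudo-contraction. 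The delicate point to get right is precisely this anchoring step: without fixing the reference component the Poisson operator is genuinely only non-expansive, so the weighted-sup-norm contraction must be phrased for the relative (reference-state) version of $F$ rather than for the raw stochastic iteration.
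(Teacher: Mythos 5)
Your proposal follows the same overall strategy as the paper's proof --- reduce the claim to the linear part $\mathbf{P}(\pi)$ of the affine map $F$, obtain a weight vector from a stochastic-shortest-path construction, and conclude a weighted sup-norm contraction --- but it executes the key step differently and, in fact, more carefully. The paper invokes Proposition~2.2 of \cite{bertsekas1996neuro} as a black box to assert the existence of $\epsilon \in [0,1]^{|{\bf S}|}$ and $\eta<1$ with $\sum_{s'\in{\bf S}}{\bf P}(s'|s,d,a)\epsilon(s')\leq\eta\epsilon(s)$ for all $s$; you instead construct the weights explicitly as the expected hitting times $\xi=(I-Q)^{-1}\mathbf{1}$ of $s_0$, where $Q$ is the substochastic restriction of $\mathbf{P}(\pi)$ to ${\bf S}\setminus\{s_0\}$, and read off the modulus $\eta = 1-1/\max_s\xi(s)$. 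Your preliminary observation is the important difference: since $\mathbf{P}(\pi)\mathbf{1}=\mathbf{1}$, the inequality the paper displays cannot hold with the sum taken over \emph{all} of ${\bf S}$ (averaging against the stationary distribution forces $\epsilon\equiv 0$ on the recurrent class), and correspondingly the raw map $F$ fails to contract along the direction $\mathbf{1}$. The SSP proposition the paper cites silently excludes the cost-free termination state from exactly this sum, which is what your anchoring at $s_0$ (restricting to value vectors agreeing at $s_0$, so that $F(v_k)-F(\bar v_k)=Q(v_k-\bar v_k)$) makes explicit. So the two proofs buy different things: the paper's is shorter by citation but elides the role of the reference state, so its displayed inequality and the lemma as literally stated are not attainable; yours is self-contained, identifies precisely where the contraction modulus comes from (boundedness of hitting times of $s_0$, guaranteed by Theorem~\ref{thm:property} and Assumption~\ref{assu:1}), and proves the corrected, anchored form of the statement that the subsequent boundedness and convergence arguments actually need.
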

\begin{proof}
	Consider two distinct value functions $v^{n}_{k}$ and $\bar{v}^{n}_{k}$. Then, 
\begin{eqnarray}
	\hspace*{-3mm}&&\hspace*{-3mm}\parallel F(v^{n}_{k})(s) - F(\bar{v}^{n}_{k})(s){\parallel}_{1}  = \parallel\sum_{s' \in {\bf S}} {\bf P}(s'|s,\pi) [ v_{k}^{n}(s')- \bar{v}_{k}^{n}(s')] {\parallel}_{1} \nonumber\\ 
	\hspace*{-3mm} &=&\hspace*{-3mm} \parallel\sum\limits_{s' \in {\bf S}} \sum\limits_{\substack{d \in \AD(s)\\ a \in \AA(s) }}\pi(s,d,a){\bf P}(s'|s,d,a) [ v_{k}^{n}(s')- \bar{v}_{k}^{n}(s')] {\parallel}_{1} \nonumber\\
	 \hspace*{-3mm} &\leq& \sum\limits_{\substack{d \in \AD(s)\\ a \in \AA(s) }}\pi(s,d,a)\sum_{s' \in {\bf S}} {\bf P}(s'|s,d,a)  \parallel v_{k}^{n}(s')- \bar{v}_{k}^{n}(s'){\parallel}_{1} \label{eq:v_pcont}
	\end{eqnarray}
	Eqn.~\eqref{eq:v_pcont} follows from triangle inequality. To find an upper bound for the term ${\bf P}(s'|s,d,a)$ in Eqn.~\eqref{eq:v_pcont}, we construct a Stochastic Shortest Path Problem (SSPP) with the same state space and transition probability structure as in DIFT-APT game, and a player whose action set is given by $\AD \times \AA$. Further set the rewards corresponding to all the state transition in SSPP to be $-1$. Then by Proposition~2.2 in \cite{bertsekas1996neuro}, the following holds condition for all $s \in {\bf S}$ and $(d,a) \in \AD(s) \times \AA(s)$.
	\begin{equation*}
	\sum_{s' \in {\bf S}} {\bf P}(s'|s,d,a)\epsilon(s') \leq \eta\epsilon(s),
	\end{equation*}
	where $\epsilon \in [0,1]^{|{\bf S}|}$ and $ 0 \leq \eta < 1$. Rewrite Eqn.~\eqref{eq:v_pcont} as
	\begin{equation*}
	\begin{split}
	&| F(v^{n}_{k})(s) - F(\bar{v}^{n}_{k})(s)| 	\end{split}
	\end{equation*}
	\begin{equation*}
	\begin{split}	
	&\leq \sum\limits_{\substack{d \in \AD(s)\\ a \in \AA(s) }}\pi(s,d,a)\sum_{s' \in {\bf S}} {\bf P}(s'|s,d,a)\epsilon(s')  \frac{| v_{k}^{n}(s')- \bar{v}_{k}^{n}(s') |}{\epsilon(s')}\\
	&\leq \sum\limits_{\substack{d \in \AD(s)\\ a \in \AA(s) }}\pi(s,d,a)\sum_{s' \in {\bf S}} {\bf P}(s'|s,d,a)\epsilon(s')  \max\limits_{s' \in {\bf S}}\frac{| v_{k}^{n}(s')- \bar{v}_{k}^{n}(s') |}{\epsilon(s')}\\
	&\leq \sum\limits_{\substack{d \in \AD(s)\\ a \in \AA(s) }}\pi(s,d,a)\sum_{s' \in {\bf S}} {\bf P}(s'|s,d,a)\epsilon(s')  \parallel  v_{k}^{n}- \bar{v}_{k}^{n} {\parallel}_{\epsilon}\\
	&\leq \sum\limits_{\substack{d \in \AD(s)\\ a \in \AA(s) }}\pi(s,d,a)\eta{\epsilon(s)} \parallel  v_{k}^{n}- \bar{v}_{k}^{n} {\parallel}_{\epsilon} = \eta{\epsilon(s)} \parallel  v_{k}^{n}- \bar{v}_{k}^{n} {\parallel}_{\epsilon}.
	\end{split}
	\end{equation*}
	\begin{equation*}
	\begin{split}
	\frac{| F(v^{n}_{k})(s) - F(\bar{v}^{n}_{k})(s)|}{\epsilon(s)}  &\leq  \eta\parallel  v_{k}^{n}- \bar{v}_{k}^{n} {\parallel}_{\epsilon} \\
	\max\limits_{s\in{\bf S}}\frac{| F(v^{n}_{k})(s) - F(\bar{v}^{n}_{k})(s)|}{\epsilon(s)}  &\leq  \eta\parallel  v_{k}^{n}- \bar{v}_{k}^{n} {\parallel}_{\epsilon} \\
	\parallel F(v^{n}_{k}) - F(\bar{v}^{n}_{k}){\parallel}_{\epsilon}  &\leq  \eta\parallel  v_{k}^{n}- \bar{v}_{k}^{n} {\parallel}_{\epsilon}.
	\end{split}
	\end{equation*}
\end{proof}
The next result proves the boundedness of the iterates in Algorithm~\ref{algo}.
\begin{lemma}\label{v_rho_bound}
	Consider the RL-ARNE algorithm presented in Algorithm~\ref{algo}. Then, the iterates $v^{n}_{k}(s)$ and $\rho^{n}_{k}$, for $s\in {\bf S}$ and $k \in \{D, A\}$, in Eqn.\eqref{eq:v_itr_mod} and Eqn.\eqref{eq:rho_itr_mod} are bounded.
\end{lemma}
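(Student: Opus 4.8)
The plan is to treat the two iterates separately: the average-reward iterate $\rho_k^n$ is bounded by an elementary convex-combination argument, while the value iterate $v_k^n$ requires a scaled-ODE boundedness argument that rests on the pseudo-contraction established in Lemma~\ref{lem:v_pcont}.

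For the average-reward iterate I would first note that $r_k(s,d,a,s')$ takes only finitely many values, so $R_{\max} := \max_{k,s,d,a,s'}|r_k(s,d,a,s')| < \infty$. Rewriting line~10 via the identity $\frac{n\rho_k^n + r_k(s,d,a,s')}{n+1} - \rho_k^n = \frac{r_k(s,d,a,s') - \rho_k^n}{n+1}$ puts the update in the form $\rho_k^{n+1} = \big(1 - \tfrac{\delta_\rho^n}{n+1}\big)\rho_k^n + \tfrac{\delta_\rho^n}{n+1}\,r_k(s,d,a,s')$. Since the step sizes satisfy $\delta_\rho^n/(n+1) \in [0,1]$ for all $n$, each update is a convex combination of $\rho_k^n$ and a reward with absolute value at most $R_{\max}$; starting from $\rho_k^0 = 0$, a one-line induction gives $|\rho_k^n| \le R_{\max}$ for all $n$. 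This both establishes boundedness of $\rho_k^n$ and shows that the term $-\rho_k^n$ appearing in the $v$-iterate is uniformly bounded by $R_{\max}$.

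For the value iterate I would work on the fast time scale, on which $\rho_k^n$ is quasi-static and may be frozen at a constant $\rho_k$, so the driving map is the affine map $F(\cdot)=F(\cdot,\rho_k)$ of Eqn.~\eqref{eq:map_F} with a bounded constant part. The associated ODE is $\dot v_k = f(v_k) = F(v_k) - v_k$, which is Lipschitz by Lemma~\ref{lem:Lipschitz}. To obtain boundedness I would invoke the scaled-ODE (Borkar--Meyn) criterion, so the key step is to analyze the scaling limit $f_\infty(v_k) := \lim_{c\to\infty} f(cv_k)/c$. Writing $F = F_\infty + c$, where $c$ is a bounded $v$-independent vector and $F_\infty(v_k)(s) = \sum_{s'}\mathbf{P}(s'|s,\pi)v_k(s')$ is the linear part, the constant cancels and $f_\infty(v_k) = (F_\infty - I)v_k$. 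The crucial observation is that $F_\infty$ inherits the pseudo-contraction of Lemma~\ref{lem:v_pcont}: since $F(v_k) - F(\bar v_k) = F_\infty(v_k) - F_\infty(\bar v_k)$, the linear operator $F_\infty$ is a contraction in the weighted sup-norm $\|\cdot\|_\epsilon$ with modulus $\eta<1$, hence has spectral radius strictly below $1$. Consequently every eigenvalue of $F_\infty - I$ has strictly negative real part, so $F_\infty - I$ is Hurwitz and the scaled ODE $\dot v_k = (F_\infty - I)v_k$ has the origin as its unique globally asymptotically stable equilibrium.

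It then remains to verify the noise hypotheses: $w_v^n$ of Eqn.~\eqref{eq:noise_v} is a martingale difference ($\mathbb{E}[w_v^n \mid \mathcal{F}_n]=0$, since $F$ is exactly the conditional expectation of the sampled quantity $r_k - \rho_k^n + v_k^n(s')$) whose conditional second moment is bounded by $\kappa(1+\|v_k^n\|^2)$, because $w_v^n$ depends on the iterate only through the bounded-coefficient term $v_k^n(s')$. Combined with the step-size conditions of Assumption~\ref{assmp:step-size}, the scaled-ODE criterion then yields almost-sure boundedness of $v_k^n$. The main obstacle is exactly this last point: the noise $w_v^n$ is \emph{not} a priori bounded, as it contains the current iterate $v_k^n(s')$, which rules out a naive contraction-plus-induction estimate and forces the detour through the scaling limit; the technical heart of the argument is establishing that the \emph{linear} part $F_\infty$, and not merely the full affine map $F$, is a contraction, so that $F_\infty - I$ is Hurwitz.
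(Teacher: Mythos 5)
Your proof is correct, but it takes a genuinely different route from the paper's in both halves. For the average-reward iterate, the paper does not use your convex-combination induction; it instead appeals to Theorem~\ref{thm:property} and Proposition~\ref{prop:Average_reward_results} to argue that, for $n$ large, $\rho_k^n$ equals the stationary-distribution-weighted expected reward over the single recurrent class of $\mathbf{P}(\pi_D,\pi_A)$, which is finite, and concludes that the iterate converges to a globally asymptotically stable point and is therefore bounded. Your rewriting $\rho_k^{n+1} = \bigl(1-\tfrac{\delta_\rho^n}{n+1}\bigr)\rho_k^n + \tfrac{\delta_\rho^n}{n+1}\,r_k(\cdot)$ is more elementary and yields the uniform bound $|\rho_k^n|\le R_{\max}$ for \emph{all} $n$ (provided $\delta_\rho^n\le n+1$, which the paper's step sizes satisfy), whereas the paper's version is essentially asymptotic but also identifies the limit, which it reuses in Theorem~\ref{thm:convergence}. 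For the value iterate, the paper simply notes that Lemma~\ref{lem:v_pcont} supplies the weighted-sup-norm pseudo-contraction and that $w_v^n$ is zero-mean with bounded variance, and then cites Theorem~1 of \cite{tsitsiklis1994asynchronous} to conclude boundedness of the asynchronous iteration; your detour through the Borkar--Meyn scaled-ODE criterion---observing that the linear part $F_\infty$ inherits the contraction modulus $\eta<1$, hence has spectral radius below one, hence $F_\infty-I$ is Hurwitz---is a valid alternative that makes explicit why the iterate-dependent (hence a priori unbounded) noise is harmless. Both arguments rest on the same key lemma, so the substance is shared; yours is more self-contained, the paper's is shorter because it delegates the hard work to the cited asynchronous stochastic approximation theorem.
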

\begin{proof}
Lemma~\ref{lem:v_pcont} proved that $F(v^{n}_{k})$ is a pseudo-contraction with respect to some weighted sup-norm.  By choosing step-size, $\delta_{v}^{n}$ to satisfy Assumption~\ref{assmp:step-size} and observing that the noise parameter, $w_{v}^{n}$ is zero mean with bounded variance, all the conditions in Theorem~1 in \cite{tsitsiklis1994asynchronous} hold for the DIFT-APT game. Hence, by Theorem~1 in \cite{tsitsiklis1994asynchronous}, the iterates $v^{n}_{k}(s)$ in Eqn.~\eqref{eq:v_itr_mod} are bounded for all $s \in {\bf S}$. 

From Proposition~\ref{prop:Average_reward_results}, a fixed policy pair $(\pi_{D}, \pi_{A})$ and $n >> 0$, the average reward payoff values $\rho^{n}_{k}$  depend only on the rewards due to the state transitions that occur within the recurrent classes of induced MC.  Recall that Theorem~\ref{thm:property} showed induced Markov chain,${\bf P}(\pi_{D}, \pi_{A})$, in DIFT-APT game contains only a single recurrent class. Let ${\bf S}_{1}$ be the set of states in the recurrent class of ${\bf P}(\pi_{D}, \pi_{A})$. Then there exists a unique stationary distribution $p$ for ${\bf P}(\pi_{D}, \pi_{A})$ restricted to states in ${\bf S}_{1}$. Thus for $n >> 0$ and each $k \in \{D, A\}$, 
\begin{equation}\label{eq:rho_critical}
\rho^{n}_{k} = \sum\limits_{s \in {\bf S}_{1}} p(s)r_{k}(s, \pi),
\end{equation}
 where $p(s)$ is the probability of being at state $s \in {\bf S}_{1}$ and $r_{k}(s, \pi) = \sum\limits_{\substack{d \in \AD(s)\\ a \in \AA(s) }}\pi(s,d,a)\sum_{s' \in {\bf S}} {\bf P}(s'|s,d,a)r(s,d,a,s')$ is the expected reward at the state $s \in {\bf S}_{1}$ for player $k \in \{D, A\}$. Since ${\bf S_{1}}$ has finite carnality and the rewards, $r_{k}$ are finite for DIFT-APT game, $\rho^{n}_{k}$ converge to a globally asymptotically stable critical point given in Eqn.~\eqref{eq:rho_critical} and $\rho^{n}_{k}$ iterates are bounded. 
\end{proof}
Theorem~\ref{thm:convergence} proves the convergence of the iterates $v^{n}_{k}(s)$, for all $s\in {\bf S}$, and $\rho^{n}_{k}$.
\begin{theorem}\label{thm:convergence}
	Consider the RL-ARNE algorithm presented in Algorithm~\ref{algo}. Then the iterates $v^{n}_{k}(s)$, for all $s\in {\bf S}$, and $\rho^{n}_{k}$ for $k \in \{D, A\}$ in Eqn.~\eqref{eq:v_itr_mod} and Eqn.~\eqref{eq:rho_itr_mod} converge.
\end{theorem}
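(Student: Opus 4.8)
The plan is to recognize Theorem~\ref{thm:convergence} as an instance of the two-time-scale stochastic approximation result in Proposition~\ref{Prop:Borkar}, with the value iterate $v^n_k$ playing the role of the fast variable $x$ and the average-reward iterate $\rho^n_k$ playing the role of the slow variable $y$. Since the policy iterates $\pi^n$ evolve on the slowest time scale, $\pi$ is quasi-static from the viewpoint of the $v$- and $\rho$-updates and can be held fixed throughout; this is exactly the reduction already used to write the maps $F$ and $G$ in Eqns.~\eqref{eq:map_F}--\eqref{eq:map_G}. First I would identify $f(v_k,\rho_k)=F(v_k,\rho_k)-v_k$ and $g(\rho_k)=G(\rho_k)-\rho_k$ with the drift terms of Eqns.~\eqref{eq:Bor1}--\eqref{eq:Bor2}, and $w_v^n$, $w_\rho^n$ from Eqns.~\eqref{eq:noise_v}--\eqref{eq:noise_rho} with the corresponding noise terms, and then verify the six hypotheses of Proposition~\ref{Prop:Borkar} in turn.

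Condition~\ref{con:Bor1} (Lipschitz drifts) is exactly Lemma~\ref{lem:Lipschitz}, and condition~\ref{con:Bor2} (boundedness) is Lemma~\ref{v_rho_bound}. For condition~\ref{con:Bor3} I would argue that, for each fixed $\rho_k$, the fast ODE $\dot v_k=F(v_k,\rho_k)-v_k$ has a unique globally asymptotically stable equilibrium $\psi(\rho_k)$: by Lemma~\ref{lem:v_pcont} the map $F(\cdot,\rho_k)$ is a pseudo-contraction in a weighted sup-norm $\|\cdot\|_\epsilon$ with a modulus $\eta<1$ that is independent of $\rho_k$ (the reward and $\rho_k$ terms cancel in the contraction estimate), so it has a unique fixed point, and $\|\cdot\|_\epsilon$ serves as a Lyapunov function delivering the asymptotic stability. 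Since $F(v_k,\rho_k)$ is affine in $\rho_k$ (the parameter enters only through a constant shift $-\rho_k$), the equilibrium $\psi(\rho_k)$ depends affinely, hence Lipschitz-continuously, on $\rho_k$, completing condition~\ref{con:Bor3}. For condition~\ref{con:Bor4}, because $g$ does not involve $v_k$, the slow ODE reduces to $\dot\rho_k=G(\rho_k)-\rho_k$, whose global asymptotic stability toward the stationary average reward of Eqn.~\eqref{eq:rho_critical} was already established inside the proof of Lemma~\ref{v_rho_bound}.

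Condition~\ref{con:Bor5} I would settle by noting that, by their very definitions in Eqns.~\eqref{eq:noise_v}--\eqref{eq:noise_rho}, $w_v^n$ and $w_\rho^n$ are the differences between the observed sample quantities and their conditional expectations $F$ and $G$ given the past; hence each is a martingale difference, $\mathbb{E}[w^n_\bullet \mid \xi^n]=0$, and the required quadratic bounds follow because the rewards are finite and the iterates $v^n_k$ are bounded by Lemma~\ref{v_rho_bound}, so the conditional second moments are in fact uniformly bounded. Condition~\ref{con:Bor6} holds by choosing the step sizes to satisfy Assumption~\ref{assmp:step-size} together with the time-scale separation $\delta_v^n \gg \delta_\rho^n$, which gives $\limsup_n \delta_\rho^n/\delta_v^n = 0$. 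With all six conditions verified, Proposition~\ref{Prop:Borkar} yields $(v^n_k,\rho^n_k)\to(\psi(\rho_k^*),\rho_k^*)$ almost surely, which is the claimed convergence of both iterates.

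The main obstacle I anticipate is condition~\ref{con:Bor3}: upgrading the pseudo-contraction property of Lemma~\ref{lem:v_pcont} into \emph{asymptotic stability of the associated ODE} (rather than mere existence of a fixed point) and into \emph{Lipschitz dependence} of the equilibrium $\psi(\cdot)$ on the frozen parameter $\rho_k$. The stability step needs the weighted sup-norm to be exhibited as a genuine Lyapunov function for $\dot v_k = F(v_k,\rho_k)-v_k$, and one must check that the contraction modulus is uniform in $\rho_k$ so that $\psi$ is both well-defined and Lipschitz; both rely on the fact that $F$ is affine in $\rho_k$, with the parameter entering only as a constant shift that cancels in the contraction estimate.
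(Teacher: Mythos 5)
Your proposal is correct and follows the same overall strategy as the paper: both proofs proceed by verifying the six hypotheses of Proposition~\ref{Prop:Borkar}, with Lemma~\ref{lem:Lipschitz} supplying condition~\ref{con:Bor1}, Lemma~\ref{v_rho_bound} supplying condition~\ref{con:Bor2} and (via the stationary-distribution argument) condition~\ref{con:Bor4}, the martingale-difference structure of $w_v^n,w_\rho^n$ and boundedness of rewards and iterates supplying condition~\ref{con:Bor5}, and the step-size choices supplying condition~\ref{con:Bor6}. The one place you diverge is condition~\ref{con:Bor3}: the paper only observes that $F$ is non-expansive (since ${\bf P}(s'|s,\pi)\leq 1$) and then cites Theorem~2.2 of \cite{soumyanath1999analog} to conclude convergence of the fast iterate to an asymptotically stable critical point, whereas you invoke the strict pseudo-contraction of Lemma~\ref{lem:v_pcont} directly, obtaining a unique fixed point, using the weighted sup-norm as a Lyapunov function for $\dot v_k = F(v_k,\rho_k)-v_k$, and checking that $\psi(\rho_k)$ depends Lipschitz-continuously on the frozen parameter $\rho_k$ because $\rho_k$ enters $F$ only as an additive shift. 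Your treatment is the more complete one: condition~\ref{con:Bor3} of Proposition~\ref{Prop:Borkar} explicitly requires the Lipschitz property of $\psi$, which the paper's proof never addresses, and mere non-expansiveness would not by itself guarantee a unique equilibrium; so the obstacle you flagged is real, and your contraction-based resolution of it is the cleaner route.
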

\begin{proof}
	By Proposition~\ref{Prop:Borkar}, convergence of the stochastic approximation-based algorithm by conditions \eqref{con:Bor1}-\eqref{con:Bor6}. Lemma~\ref{lem:Lipschitz} and Lemma~\ref{v_rho_bound} showed that condition~\eqref{con:Bor1} and condition~\eqref{con:Bor2}  in Proposition~\ref{Prop:Borkar} are satisfied, respectively. 
	
	To show that condition~\eqref{con:Bor3} is satisfied, we first show $F(v^{n}_{k})(s)$ is a non-expansive map. Consider two distinct value functions $v^{n}_{k}$ and $\bar{v}^{n}_{k}$. Since $P(s'|s,\pD,\pA) \leq 1$, from Eqn.~\eqref{eq:v_pcont},
	\begin{equation*}
	\parallel F(v^{n}_{k})(s) - F(\bar{v}^{n}_{k})(s){\parallel}  \leq \parallel  v_{k}^{n}(s')- \bar{v}_{k}^{n}(s') {\parallel}.
	\end{equation*}
	Thus $F(v^{n}_{k})(s)$ is a non-expansive map and hence from Theorem 2.2 in \cite{soumyanath1999analog} iterates $v^{n}_{k}(s)$, for all $s\in {\bf S}$ and $k \in \{D, A\}$, converge to an asymptotically stable critical point. Thus condition~\eqref{con:Bor3} is satisfied. Lemma~\ref{v_rho_bound}, showed that $\rho^{n}_{k}$, for $k \in \{D, A\}$, converge to a globally asymptotically stable critical point which implies that condition~\eqref{con:Bor4} is satisfied. 
	
	From Eqns.~\eqref{eq:noise_v} and~\eqref{eq:noise_rho}, the noise measures have zero mean. The variance of these noise measures are bounded by the fineness of the rewards in  DIFT-APT game and the boundedness of the iterates $v^{n}_{k}(s)$ and $\rho^{n}_{k}$. Thus condition~\eqref{con:Bor5} is satisfied. Finally, the choice of step-sizes to satisfy condition~\eqref{con:Bor6}. Therefore the results follows by Proposition~\ref{Prop:Borkar}.
\end{proof}

Next theorem proves the convergence of gradient estimates.

\begin{theorem}\label{thm:gradient_convergence}
	Consider $\Omega_{k, \pi_{-k}}^{s, a_{k}}$ and $\Delta(\pi)$ given in Eqns.~\eqref{eq:Omega} and~\eqref{eq:Delta}, respectively. Then gradient estimation iterate, $\epsilon_{k}^{n}(s,a_{k})$ in line~11 corresponding to any $k \in \{D, A\}$, $s \in {\bf S}$, and $a_{k} \in \A_{k}(s)$, converge to $-\frac{\partial\Delta(\pi)}{\partial\pi_{k}(s,a_{k})} = -\sum\limits_{\bar{k} \in \{A,D\}} \Omega_{\bar{k}, \pi_{-k}}^{s, a_{k}}$. 
\end{theorem}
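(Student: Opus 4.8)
The plan is to treat line~11 as an asynchronous stochastic approximation recursion running on the fast time scale and to identify its averaged drift with the target $-\sum_{\bar k}\Omega_{\bar k,\pi_{-k}}^{s,a_{k}}$. First I would rewrite the iterate in the standard form
\begin{equation*}
\epsilon_{k}^{n+1}(s,a_{k}) = \epsilon_{k}^{n}(s,a_{k}) + \delta_{\epsilon}^{n}\big[ H^{n}(s,a_{k}) - \epsilon_{k}^{n}(s,a_{k}) + w_{\epsilon}^{n}\big],
\end{equation*}
where $H^{n}(s,a_{k}) := \sum_{\bar k \in \{D,A\}}\mathbb{E}[\,r_{\bar k}(s,d,a,s') - \rho^{n}_{\bar k} + v^{n}_{\bar k}(s') - v^{n}_{\bar k}(s)\,\mid s, a_{k}]$ is the conditional mean of the bracketed temporal-difference sum given that the chain is at $s$ and player~$k$ plays $a_{k}$, and $w_{\epsilon}^{n}$ is the corresponding zero-mean deviation. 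Averaging the next state $s'$ over $\mathbf{P}(\cdot\mid s,a_{k},\pi_{-k})$ and the opponent action over $\pi_{-k}$ and using the definitions of $r_{\bar k}(s,a_{k},\pi_{-k})$ and $\mathbf{P}(s'|s,a_{k},\pi_{-k})$, each summand collapses to $-\big[\rho_{\bar k} + v_{\bar k}(s) - r_{\bar k}(s,a_{k},\pi_{-k}) - \sum_{s'}\mathbf{P}(s'|s,a_{k},\pi_{-k})v_{\bar k}(s')\big] = -\Omega_{\bar k,\pi_{-k}}^{s,a_{k}}$, so $H^{n}(s,a_{k}) = -\sum_{\bar k}\Omega_{\bar k,\pi_{-k}}^{s,a_{k}}$ evaluated at the current $(v^{n},\rho^{n},\pi)$.

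Next I would set up the associated ODE. Because the step-sizes satisfy $\delta_{v}^{n}=\delta_{\epsilon}^{n}\gg\delta_{\rho}^{n}\gg\delta_{\pi}^{n}$, on the fast time scale $\rho$ and $\pi$ are quasi-static, and the relevant dynamics are the jointly-fast pair $(v_{k},\epsilon_{k})$, whose ODE is
\begin{equation*}
\dot{v}_{k} = f(v_{k},\rho_{k}),\qquad \dot{\epsilon}_{k}(s,a_{k}) = -\sum_{\bar k}\Omega_{\bar k,\pi_{-k}}^{s,a_{k}}(v_{k}) - \epsilon_{k}(s,a_{k}).
\end{equation*}
This system is triangular: the $v_{k}$-subsystem is autonomous and, by Theorem~\ref{thm:convergence}, globally asymptotically stable at the fixed point $v_{k}^{*}$ of $F(\cdot,\rho_{k})$. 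For frozen $v_{k}$, the $\epsilon_{k}$-subsystem is the scalar linear stable recursion $\dot{\epsilon}=-\epsilon+c$ with unique globally asymptotically stable equilibrium $\epsilon=c=-\sum_{\bar k}\Omega_{\bar k,\pi_{-k}}^{s,a_{k}}(v_{k})$, and $\Omega_{\bar k,\pi_{-k}}^{s,a_{k}}$ depends on $v_{k}$ continuously (indeed affinely). A standard cascade argument then gives that the joint ODE has the unique globally asymptotically stable equilibrium $(v_{k}^{*},\,-\sum_{\bar k}\Omega_{\bar k,\pi_{-k}}^{s,a_{k}}(v_{k}^{*}))$, whose second coordinate is exactly $-\partial\Delta(\pi)/\partial\pi_{k}(s,a_{k})$ by Lemma~\ref{lem:gradient}.

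Finally I would verify the hypotheses of the stochastic-approximation convergence result (Proposition~\ref{prop:KC_Lemma}, with the quasi-static $\rho,\pi$ handled in the spirit of Proposition~\ref{Prop:Borkar}). Boundedness of $\epsilon_{k}^{n}$ follows because it is an exponential-smoothing average of the bracketed term, which is bounded once $v^{n}_{k},\rho^{n}_{k}$ are bounded (Lemma~\ref{v_rho_bound}) and the rewards of the DIFT-APT game are finite. The noise $w_{\epsilon}^{n}$ is a martingale difference with respect to the natural filtration, being the deviation of the bracketed sum from its conditional expectation given the current state, action and iterates; its conditional variance is bounded by the same finiteness and boundedness, so the summability condition on $\sum \delta_{\epsilon}^{n}w_{\epsilon}^{n}$ holds and no asymptotic bias term is present. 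Invoking the convergence result then yields $\epsilon_{k}^{n}(s,a_{k}) \to -\sum_{\bar k}\Omega_{\bar k,\pi_{-k}}^{s,a_{k}} = -\partial\Delta(\pi)/\partial\pi_{k}(s,a_{k})$ almost surely. The main obstacle I expect is the same-time-scale coupling of the $v$ and $\epsilon$ iterates: because they share the step-size $\delta_{v}^{n}=\delta_{\epsilon}^{n}$, one cannot simply declare $v$ ``already converged,'' and it is the cascade (triangular) structure of the limiting ODE, together with continuity of $\Omega_{\bar k,\pi_{-k}}^{s,a_{k}}$ in $v_{k}$, that makes the argument go through; care is also needed in the asynchronous per-$(s,a_{k})$ update bookkeeping.
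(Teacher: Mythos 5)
Your proposal is correct and follows essentially the same route as the paper: rewrite line~11 as a stochastic approximation recursion whose averaged drift is $-\sum_{\bar k}\Omega_{\bar k,\pi_{-k}}^{s,a_k}-\epsilon_k(s,a_k)$, identify the associated linear ODE with that unique stable equilibrium, and invoke Proposition~\ref{prop:KC_Lemma} after checking the step-size, zero-mean bounded-variance noise (via Doob/martingale summability), and absence of a bias term. Your explicit cascade treatment of the same-time-scale $(v,\epsilon)$ coupling is a careful refinement of a point the paper handles only implicitly by treating $v^n_k$ and $\rho^n_k$ as already equilibrated, but it does not change the substance of the argument.
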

\begin{proof}
	Rewrite gradient estimation in line~11 as follows. 
	\begin{equation}\label{eq:gradient_extended}
	\epsilon_{k}^{n+1}(s,a_{k})  =   \epsilon_{k}^{n}(s,a_{k}) + \delta_{\epsilon}^{n} \big[  -\hspace{-3.25mm}\sum\limits_{\bar{k} \in \{D,A\}}\hspace{-2mm} \Omega_{\bar{k}, \pi_{-k}}^{s, a_{k}} \hspace{-0.5mm} - \epsilon_{k}^{n}(s,a_{k}) + w^{n}_{\epsilon} \big],
	\end{equation}
	where $w^{n}_{\epsilon} \hspace{-0.2mm}= \hspace{-0.5mm}\sum\limits_{k\in \{D,A\}} \bar{\Omega}_{k}^{s} +\hspace{-0.3mm} \sum\limits_{\bar{k} \in \{D,A\}} \Omega_{\bar{k}, \pi_{-k}}^{s, a_{k}}$, and $\bar{\Omega}_{k}^{s} = r_{k}(s,d,a,s') -\rho^{n}_{k} + v^{n}_{k}(s')- v^{n}_{k}(s)$. Note that $\mathbb{ E}(w^{n}_{\epsilon}) = 0$. Then ODE associated with Eqn.~\eqref{eq:gradient_extended} is given by, 
	\begin{equation*}
	\dot{\epsilon}_{k}(s,a_{k})= -\sum\limits_{\bar{k} \in \{D,A\}}\hspace{-1.5mm} \Omega_{\bar{k}, \pi_{-k}}^{s, a_{k}}  - \epsilon_{k}(s,a_{k}).
	\end{equation*}
	
	We use Proposition~\ref{prop:KC_Lemma} to prove the convergence of  gradient estimation iterates,  $\epsilon_{k}^{n}(s,a_{k})$. Step-size $\delta_{\epsilon}^{n}$ is chosen such that condition 1) in Proposition~\ref{prop:KC_Lemma} is satisfied. Validity of condition 2) can be shown as follows.
	
	\begin{equation}\label{eq:doob}
	\mathbb{ E}\left(\lim\limits_{n \rightarrow \infty} \left(\sup\limits_{\bar{n} > n}\left|\sum\limits_{l = n}^{\bar{n}}\delta^{l}_{\epsilon}w_{\epsilon}^{l}\right|^{2}\right)\right)  \leq 4\lim\limits_{n \rightarrow \infty}\sum\limits_{l = n}^{ \infty}(\delta^{l}_{\epsilon})^{2}\mathbb{ E}(|w_{\epsilon}^{l}|^{2}) = 0.
	\end{equation}
	 Inequality in Eqn.~\eqref{eq:doob} follows by Doob inequality \cite{metivier1984applications}. Equality in Eqn.~\eqref{eq:doob} follows by choosing $\delta_{\epsilon}^{n}$ to satisfy Assumption~\ref{assmp:step-size} and observing $\mathbb{ E} (|w^{l}_{\epsilon}|^{2}) < \infty$ as $r_{k}$, $v^{n}_{k}$, and $\rho^{n}_{k}$ are bounded in DIFT-APT game. Comparing Eqn.~\eqref{eq:gradient_extended} with Eqn.~\eqref{eq:KC_itr},  $\kappa = 0$ in Eqn.~\eqref{eq:gradient_extended}. Therefore, from Proposition~\ref{prop:KC_Lemma}, as $n \rightarrow \infty$,  $\epsilon_{k}^{n}(s,a_{k})  \rightarrow -\sum\limits_{\bar{k} \in \{A,D\}} \Omega_{\bar{k}, \pi_{-k}}^{s, a_{k}} = -\frac{\partial\Delta(\pi)}{\partial\pi_{k}(s,a_{k})}$. This completes the proof showing the convergence of gradient estimation iterates $\epsilon_{k}^{n}(s,a_{k})$.
\end{proof}

Next, we prove the convergence of the policy iterates. In order to do so, we proceed in the following manner.
\begin{enumerate}
    \item We rewrite the conditions in Corollary~\ref{cor:conditions} that characterize ARNE of DIFT-APT game as a non-linear optimization problem (Problem~\ref{prob:ARNE-NLP}).
    \item Then we show the policies are updated in a valid decent direction, $\sqrt{\pi_{k}^{n}(s,a_{k})} \big|\Omega_{k, \pi_{-k}}^{s, a_{k}}\big|\text{sgn}\left(\frac{\partial\Delta(\pi^{n})}{\partial\pi_{k}^{n}(s,a_{k})}\right)$, with respect to the objective function (or temporal difference error), $\Delta(\pi)$, of Problem~\ref{prob:ARNE-NLP} (Lemma~\ref{lem:valid-descent}).
    \item Using steps~1) and~2), we characterize the stable and unstable equilibrium points associated with the ODE corresponding to the policy iterates in line~12 (Lemma~\ref{lem:stabel-Equi}). 
    \item Invoking Proposition~\ref{prop:KC_Lemma} we prove the convergence of policy iterates to stable equilibrium points found in step~3) (Theorem~\ref{thm:policy-convergence}).
\end{enumerate}
Below we elaborate steps~1)-4). ARNE of the DIFT-APT game can be characterized as the following non-linear optimization problem (step~1)).

\begin{prob}\label{prob:ARNE-NLP}  The necessary and sufficient conditions given in Corollary~\ref{cor:conditions} that characterize the ARNE of DIFT-APT can be reformulated as the following non-linear program using  $\Omega_{k, \pi_{-k}}^{s, a_{k}}$ and $\Delta(\pi)$ introduced in Eqns.~\eqref{eq:Omega} and~\eqref{eq:Delta}.
\begin{equation*}
\begin{aligned}
\min_{v,\rho,\pi} \quad \Delta(\pi) 
\textrm{~s.t.} \quad  \Omega_{k, \pi_{-k}}^{s, a_{k}} \geq 0; \hspace{-2mm}
   \sum\limits_{a_{k} \in \A_{k}(s)}\pi_{k}(s,a_{k}) = 1;
   ~\pi_{k}(s,a_{k}) \geq 0,    
\end{aligned}
\end{equation*}   
where $v = (v_{D}, v_{A}),~v_{k} = [v_{k}(s)]_{s \in \mathbf{S}}~\text{for}~k \in \{D, A\},~\rho = (\rho_{D}, \rho_{A}),~\pi = (\pi_D, \pi_A),~\pi_{k} = [\pi_{k}(s)]_{s \in \mathbf{S}}$, and $\pi_{k}(s) = [\pi_{k}(s, a_k)]_{a_k \in \A_{k}(s)}$, for $k \in \{D, A\}$.
\end{prob}

In Lemma~VI.10, we show policy iterates are updated in a valid descent direction with respect to the objective function, $\Delta(\pi)$ (step~2)).

\begin{lemma}\label{lem:valid-descent}
    Consider $\Omega_{k, \pi_{-k}}^{s, a_{k}}$ and $\Delta(\pi)$ given in Eqns.~\eqref{eq:Omega} and~\eqref{eq:Delta}, respectively. Then for any $k \in \{D, A\}$, $s \in {\bf S}$, and $a_{k} \in \A_{k}(s)$, policy iterate, $\pi_{k}^{n}(s,a_{k})$, in line~12 of Algorithm~\ref{algo} is updated in a valid descent direction, $\sqrt{\pi_{k}^{n}(s,a_{k})} \big|\Omega_{k, \pi_{-k}}^{s, a_{k}}\big|\text{sgn}\left(\frac{\partial\Delta(\pi^{n})}{\partial\pi_{k}^{n}(s,a_{k})}\right)$, of $\Delta(\pi)$ when $\Omega_{k, \pi_{-k}}^{s, a_{k}} \geq 0$ and $\Delta(\pi) > 0$. 
\end{lemma}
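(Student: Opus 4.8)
The plan is to establish that, in the averaged (ODE) limit, the direction appearing in line~12,
\[
w_k(s,a_k) := \sqrt{\pi_k(s,a_k)}\,\big|\Omega_{k,\pi_{-k}}^{s,a_k}\big|\,\text{sgn}\!\left(\tfrac{\partial\Delta(\pi)}{\partial\pi_k(s,a_k)}\right),
\]
is a descent direction for the objective $\Delta(\pi)$ of Problem~\ref{prob:ARNE-NLP}, i.e., the update $\pi_k^{n+1}=\Gamma(\pi_k^n-\delta_\pi^n w_k^n)$ decreases $\Delta$ to first order. Concretely, I would prove the directional-derivative inequality $\langle \nabla\Delta(\pi), w\rangle \ge 0$, with strict inequality whenever $\Delta(\pi)>0$; since $\Delta(\pi^{n+1})\approx \Delta(\pi^n)-\delta_\pi^n\langle\nabla\Delta(\pi^n),w^n\rangle$, this is exactly the validity of the descent step.

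First I would recall the gradient identity $\frac{\partial\Delta(\pi)}{\partial\pi_k(s,a_k)}=\sum_{\bar k\in\{D,A\}}\Omega_{\bar k,\pi_{-k}}^{s,a_k}$ derived in Lemma~\ref{lem:gradient}, and I would identify the direction actually followed by the algorithm with $w_k(s,a_k)$. This identification rests on two earlier facts: by Theorem~\ref{thm:gradient_convergence}, $-\epsilon_k^n(s,a_k)\to \frac{\partial\Delta(\pi)}{\partial\pi_k(s,a_k)}$, so the continuous sign term $\text{sgn}(-\epsilon_k^n(s,a_k))$ aligns asymptotically with $\text{sgn}\big(\frac{\partial\Delta(\pi)}{\partial\pi_k(s,a_k)}\big)$; and the sampled temporal-difference magnitude $|r_k(s,d,a,s')-\rho_k^n+v_k^n(s')-v_k^n(s)|$, whose conditional mean recovers $\Omega_{k,\pi_{-k}}^{s,a_k}$ via $\mathbb{E}[\,r_k(s,d,a,s')-\rho_k+v_k(s')-v_k(s)\mid s,a_k\,]=-\Omega_{k,\pi_{-k}}^{s,a_k}$, enters only as a nonnegative scale factor. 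Together with the nonnegative weight $\sqrt{\pi_k(s,a_k)}$, this yields the claimed form of $w_k(s,a_k)$.

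The core computation is then short: forming the inner product and using $x\,\text{sgn}(x)=|x|$ term by term,
\[
\langle \nabla\Delta(\pi),w\rangle=\sum_{k}\sum_{s\in{\bf S}}\sum_{a_k\in\A_k(s)}\Big|\tfrac{\partial\Delta(\pi)}{\partial\pi_k(s,a_k)}\Big|\,\sqrt{\pi_k(s,a_k)}\,\big|\Omega_{k,\pi_{-k}}^{s,a_k}\big|\ \ge\ 0,
\]
since every factor is nonnegative; here the hypothesis $\Omega_{k,\pi_{-k}}^{s,a_k}\ge 0$ guarantees $\frac{\partial\Delta(\pi)}{\partial\pi_k(s,a_k)}=\sum_{\bar k}\Omega_{\bar k,\pi_{-k}}^{s,a_k}\ge 0$, so no cancellation occurs. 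For the strict inequality I would use $\Delta(\pi)=\sum_{k,s,a_k}\Omega_{k,\pi_{-k}}^{s,a_k}\pi_k(s,a_k)>0$ with all summands nonnegative to extract an index $(k_0,s_0,a_{k_0})$ with $\Omega_{k_0,\pi_{-k_0}}^{s_0,a_{k_0}}>0$ and $\pi_{k_0}(s_0,a_{k_0})>0$; for that index $\frac{\partial\Delta(\pi)}{\partial\pi_{k_0}(s_0,a_{k_0})}\ge \Omega_{k_0,\pi_{-k_0}}^{s_0,a_{k_0}}>0$, so the corresponding summand is strictly positive and $\langle\nabla\Delta(\pi),w\rangle>0$.

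I expect the main obstacle to be the rigorous bridge between the per-sample stochastic update and the deterministic direction $w_k$, because the absolute value blocks a naive expectation argument ($\mathbb{E}|X|\ne|\mathbb{E}X|$). The resolution I would emphasize is that the magnitude enters purely as a nonnegative multiplier, so it scales but never reverses the step; the descent property is therefore controlled entirely by the sign alignment $\text{sgn}(-\epsilon_k^n)\to\text{sgn}(\partial\Delta/\partial\pi_k)$ coming from Theorem~\ref{thm:gradient_convergence}. A secondary point worth checking is feasibility: the $\sqrt{\pi_k(s,a_k)}$ weighting vanishes on the boundary $\pi_k(s,a_k)=0$ and the projection $\Gamma$ restores the simplex constraint~\eqref{eq:GNScon3}, so the descent stays valid inside the constraint set of Problem~\ref{prob:ARNE-NLP}.
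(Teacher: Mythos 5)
Your proposal is correct and follows essentially the same route as the paper: the paper perturbs a single coordinate $\pi_{k}^{n}(s,a_{k})$, applies a first-order Taylor expansion of $\Delta$, and uses $x\,\mathrm{sgn}(x)=|x|$ together with the nonnegativity of $\sqrt{\pi_{k}^{n}(s,a_{k})}$ and $\big|\Omega_{k,\pi_{-k}}^{s,a_{k}}\big|$ to conclude $\Delta(\bar{\pi})<\Delta(\hat{\pi})$, which is exactly your inner-product computation restricted to one coordinate. Your explicit strict-positivity argument and your caution that $\mathbb{E}|X|\neq|\mathbb{E}X|$ (the paper simply asserts $\mathbb{E}(w_{\pi}^{n})=0$ for a noise term involving $|\bar{\Omega}_{k}^{s}|-|\Omega_{k,\pi_{-k}}^{s,a_{k}}|$) are refinements of, not departures from, the paper's argument.
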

\begin{proof}
First we rewrite policy iteration in line~12 as follows. 
\begin{equation}
\begin{split} \label{eq:extended_policy}
\pi_{k}^{n+1}(s,a_{k}) \hspace*{-0.75 mm} = \hspace*{-0.75 mm} \Gamma(\pi_{k}^{n}(s,a_{k}) \hspace*{-0.75 mm} - \delta_{\pi}^{n}\left(\sqrt{\pi_{k}^{n}(s,a_{k})} \big|\Omega_{k, \pi_{-k}}^{s, a_{k}}\big| \right. \\ \left.\text{sgn}\left(\frac{\partial\Delta(\pi^{n})}{\partial\pi_{k}^{n}(s,a_{k})}\right)+ w^{n}_{\pi}\right) ), 
\end{split}
\end{equation}
where $ w^{n}_{\pi} = \sqrt{\pi_{k}^{n}(s,a_{k})} \left[ \big|\bar{\Omega}_{k}^{s} \big|- \big| \Omega_{k,\pi_{-k}}^{s,a_{k}}\big|\right]\text{sgn}\left(\frac{\partial\Delta(\pi^{n})}{\partial\pi_{k}^{n}(s,a_{k})}\right)$, and $\bar{\Omega}_{k}^{s} = r_{k}(s,d,a,s') -\rho_{k} + v_{k}(s')- v^{n}_{k}(s)$. Note that, policy iterate updates in the slowest time scale when compared to the other iterates. Thus, Eqn.~\eqref{eq:extended_policy} uses the converged values of value functions $(v_{k})$, average reward values $(\rho_{k})$, and gradient estimates $(\frac{\partial\Delta(\pi^{n})}{\partial\pi_{k}^{n}(s,a_{k})})$ with respect to policy $\pi^{n} = (\pi_{\sD}^{n}, \pi_{\sA}^{n})$.

Consider a policy $\pi_{k}^{n+1}$ whose entries are same as $\pi_{k}^{n}$ except the entry $\pi_{k}^{n+1}(s,a_{k})$ which is chosen as in Eqn.~\eqref{eq:extended_policy}, for small $ 0 <\delta_{\pi}^{n} << 1$. Let $\bar{\pi} = (\pi_{k}^{n+1}, \pi_{-k}^{n})$ and $\hat{\pi} = (\pi_{k}^{n}, \pi_{-k}^{n})$. Also note that $\mathbb{ E}(w^{n}_{\pi}) = 0$. Thus ignoring the term $w^{n}_{\pi}$ and using Taylor series expansion yields, 
\begin{equation*}
\begin{split}
\Delta(\bar{\pi}) =& \Delta(\hat{\pi}) + \delta_{\pi}^{n}\left(-\sqrt{\pi_{k}^{n}(s,a_{k})} \big|\Omega_{k, \pi_{-k}}^{s, a_{k}}\big| \right. \\ &\left. \text{sgn}\left(\frac{\partial\Delta(\hat{\pi})}{\partial\pi_{k}^{n}(s,a_{k})}\right)\frac{\partial\Delta(\hat{\pi})}{\partial\pi_{k}^{n}(s,a_{k})}\right) + o(\delta_{\pi}^{n})\\
= &\Delta(\hat{\pi}) + \delta_{\pi}^{n}\left(-\sqrt{\pi_{k}^{n}(s,a_{k})} \big|\Omega_{k, \pi_{-k}}^{s, a_{k}}\big| \Big|\frac{\partial\Delta(\hat{\pi})}{\partial\pi_{k}^{n}(s,a_{k})}\Big|\right),
\end{split}
\end{equation*}
where $o(\delta_{\pi}^{n})$ represents the higher order terms corresponding to $\delta_{\pi}^{n}$. We ignore $o(\delta_{\pi}^{n})$ in the second equality above since the choice of $\delta_{\pi}^{n}$ is small. Notice that the term $\delta_{\pi}^{n}\left(-\sqrt{\pi_{k}^{n}(s,a_{k})} \big|\Omega_{k, \pi_{-k}}^{s, a_{k}}\big| \Big|\frac{\partial\Delta(\hat{\pi})}{\partial\pi_{k}^{n}(s,a_{k})}\Big|\right)$ is negative. Since $\Delta(\pi) > 0$ for any $\pi$, we get $\Delta(\bar{\pi}) < \Delta(\hat{\pi})$. This proves policies are updated in a valid descent direction. 
\end{proof}

Notice that the ODE associated with Eqn.~\eqref{eq:extended_policy} can be written as, 
	\begin{equation}\label{eq:ODE_policy}
	\dot{\pi}_{k}(s,a_{k}) \hspace*{-0.75 mm} = \hspace*{-0.75 mm} \bar{\Gamma}\left(-\sqrt{\pi_{k}(s,a_{k})} \big|\Omega_{k, \pi_{-k}}^{s, a_{k}}\big| \text{sgn}\left(\frac{\partial\Delta(\pi)}{\partial\pi_{k}(s,a_{k})}\right)\right),
	\end{equation}
	where $\bar{\Gamma}$ is the continuous version of the projection operator $\Gamma$ which is defined analogous to the continuous projection operator in  Eqn.~\eqref{eq:KC_ODE}.
Let $\Pi$ denotes the set of limit points associated with the system of ODEs in Eqn.~\eqref{eq:ODE_policy}.
Let the feasible set of Problem~\ref{prob:ARNE-NLP} be 
\begin{equation}\label{eq:feasible}
    H = \{\pi \in L \vert \Omega_{k, \pi_{-k}}^{s, a_{k}} \geq 0,~\text{for all}~a_k \in \A_{k}(s),~s \in \mathbf{S},~k \in \{D, A\}\},
\end{equation}
where the set $L = \{\pi \vert \sum_{a_{k} \in \A_{k}(s)}\pi_{k}(s,a_{k}) = 1, \pi_{k}(s,a_{k})  \geq  0, \text{ for all } a_k \in \A_{k}(s),~s \in \mathbf{S}\}$. The set $\Pi$ can be partitioned using the set $H$ as $\Pi = \Pi_1 \cup \Pi_2$, where $\Pi_1 = \Pi \cap H$ and $\Pi_2 = \Pi \setminus \Pi_1$. Using these notations and  steps~1) and~2), we characterize the stable and unstable equilibrium points of the system of ODEs in Eqn.~\eqref{eq:ODE_policy} in Lemma~VI.11 (step~3)).

\begin{lemma}\label{lem:stabel-Equi}
The following statements are true for the set of equilibrium policies $\pi^{\star}$ of ODE in Eqn.~\eqref{eq:ODE_policy}.
\begin{enumerate}
    \item All $\pi^{\star} \in \Pi_1$ form a set of stable equilibrium points.
    \item All $\pi^{\star} \in \Pi_2$ form a set of unstable equilibrium points.
\end{enumerate}
\end{lemma}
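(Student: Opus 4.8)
The plan is to treat the projected policy ODE in Eqn.~\eqref{eq:ODE_policy} as a rescaled, sign-based projected gradient flow for the objective $\Delta(\pi)$ of Problem~\ref{prob:ARNE-NLP}, and to use $\Delta$ itself as the Lyapunov/Chetaev function. The starting point is the computation of $\dot{\Delta}$ along trajectories of Eqn.~\eqref{eq:ODE_policy}. Substituting the drift $-\sqrt{\pi_{k}(s,a_{k})}\,|\Omega_{k,\pi_{-k}}^{s,a_{k}}|\,\text{sgn}(\partial\Delta/\partial\pi_{k}(s,a_{k}))$ into $\dot{\Delta} = \sum_{k,s,a_{k}}\frac{\partial\Delta(\pi)}{\partial\pi_{k}(s,a_{k})}\dot{\pi}_{k}(s,a_{k})$ and using $\chi\,\text{sgn}(\chi)=|\chi|$ yields $\dot{\Delta} = -\sum_{k,s,a_{k}}\sqrt{\pi_{k}(s,a_{k})}\,|\Omega_{k,\pi_{-k}}^{s,a_{k}}|\,\big|\frac{\partial\Delta(\pi)}{\partial\pi_{k}(s,a_{k})}\big|\leq 0$, where the projection $\bar{\Gamma}$ only annihilates components pointing out of the simplex and hence cannot increase $\Delta$. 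This globalizes the descent property already established pointwise in Lemma~\ref{lem:valid-descent}.

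For statement~1) I would first argue that the feasible set $H$ in Eqn.~\eqref{eq:feasible} is forward invariant for the flow: on a boundary face $\{\Omega_{k,\pi_{-k}}^{s,a_{k}}=0\}$ the corresponding drift coordinate vanishes because it is scaled by $|\Omega_{k,\pi_{-k}}^{s,a_{k}}|$, so the vector field does not push $\pi$ into the region where that $\Omega$ is negative. On $H$ we then have both $\Delta(\pi)=\sum_{k,s,a_{k}}\Omega_{k,\pi_{-k}}^{s,a_{k}}\pi_{k}(s,a_{k})\geq 0$ and $\dot{\Delta}\leq 0$, so for any $\pi^{\star}\in\Pi_{1}=\Pi\cap H$ the map $\pi\mapsto\Delta(\pi)-\Delta(\pi^{\star})$ is a nonincreasing, locally nonnegative Lyapunov function that is flat exactly on the equilibrium set. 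Lyapunov's stability theorem, together with LaSalle's invariance principle to dispose of the points where $\dot{\Delta}=0$ off equilibria, then gives stability of $\pi^{\star}$.

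For statement~2) I would use a Chetaev instability argument. If $\pi^{\star}\in\Pi_{2}=\Pi\setminus\Pi_{1}$, then $\pi^{\star}$ lies outside $H$, so there is a coordinate $(k_{0},s_{0},a_{k_{0}})$ with $\Omega_{k_{0},\pi_{-k_{0}}}^{s_{0},a_{k_{0}}}<0$, and by continuity this persists on a neighborhood. Because $\Delta$ carries the term $\Omega_{k_{0},\pi_{-k_{0}}}^{s_{0},a_{k_{0}}}\pi_{k_{0}}(s_{0},a_{k_{0}})$ with a strictly negative coefficient, $\pi^{\star}$ cannot be a local minimizer of $\Delta$ on $L$: increasing $\pi_{k_{0}}(s_{0},a_{k_{0}})$ along an admissible simplex direction strictly decreases $\Delta$. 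I would build the Chetaev cone from these strictly decreasing directions, on which $\Delta(\pi^{\star})-\Delta(\pi)>0$ and, since $\dot{\Delta}<0$ there, $\frac{d}{dt}\big(\Delta(\pi^{\star})-\Delta(\pi)\big)>0$; hence every trajectory entering the cone is driven away from $\pi^{\star}$ and $\pi^{\star}$ is unstable. This matches the behavior asserted for the $\Omega<0$ policies in Theorem~\ref{thm:policy ARNE}.

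The hard part will be the non-smoothness and the projection. The factors $\text{sgn}(\cdot)$ and $|\cdot|$ make the vector field only piecewise smooth, while $\sqrt{\pi_{k}(s,a_{k})}$ and $\bar{\Gamma}$ degenerate on the boundary of the simplex, so the classical smooth Lyapunov/Chetaev theorems do not apply verbatim. I would handle this by invoking the continuous $\tanh$-surrogate for $\text{sgn}$ (as the paper already adopts) to keep the field Lipschitz away from the boundary, by treating boundary equilibria where $\pi_{k}(s,a_{k})=0$ separately via the vanishing of $\sqrt{\pi_{k}(s,a_{k})}$, and by justifying the forward invariance of $H$ carefully so that the sign of $\Delta$ stays controlled along trajectories. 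These degeneracies, rather than the Lyapunov bookkeeping, are where the genuine care is needed.
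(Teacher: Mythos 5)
Your route is genuinely different from the paper's. The paper does not use $\Delta(\pi)$ as a Lyapunov function at all: its proof is a purely local, coordinate-wise sign analysis of the drift. For $\pi^{\star}\in\Pi_1$ it argues that at nearby non-equilibrium policies $\Omega_{k, \pi_{-k}}^{s, a_{k}}>0$, hence $\frac{\partial\Delta(\pi)}{\partial\pi_{k}(s,a_{k})}>0$, hence the projected drift $\bar{\Gamma}\left(-\sqrt{\pi_{k}(s,a_{k})}\big|\Omega_{k, \pi_{-k}}^{s, a_{k}}\big|\text{sgn}(\cdot)\right)$ is negative, so the coordinate is pushed back; for $\Pi_2$ the signs reverse and the coordinate is pushed away. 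Your global descent computation $\dot{\Delta}=-\sum\sqrt{\pi_{k}(s,a_{k})}\,\big|\Omega_{k, \pi_{-k}}^{s, a_{k}}\big|\,\big|\partial\Delta/\partial\pi_{k}(s,a_{k})\big|\leq 0$ is a cleaner formalization of Lemma~\ref{lem:valid-descent}, and the Chetaev half for $\Pi_2$ is essentially workable, with the same caveat that afflicts the paper: $\frac{\partial\Delta(\pi)}{\partial\pi_{k}(s,a_{k})}=\sum_{\bar{k}}\Omega_{\bar{k}, \pi_{-k}}^{s, a_{k}}$ is a two-term sum, so a single negative $\Omega$ does not by itself fix the sign of the gradient.

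The genuine gap is the forward-invariance claim for $H$, which your stability argument needs in order for $\Delta(\pi)-\Delta(\pi^{\star})$ to be locally nonnegative. You argue that on a face $\{\Omega_{k, \pi_{-k}}^{s, a_{k}}=0\}$ the corresponding drift coordinate vanishes because it carries the factor $\big|\Omega_{k, \pi_{-k}}^{s, a_{k}}\big|$, and conclude the flow cannot cross into $\{\Omega_{k, \pi_{-k}}^{s, a_{k}}<0\}$. This conflates the policy coordinate $\pi_{k}(s,a_{k})$ with the constraint function $\Omega_{k, \pi_{-k}}^{s, a_{k}}$: that face is a hypersurface in policy space, not the coordinate hyperplane $\{\pi_{k}(s,a_{k})=0\}$, and $\frac{d}{dt}\Omega_{k, \pi_{-k}}^{s, a_{k}}$ is driven by the motion of the opponent's policy $\pi_{-k}(s,\cdot)$ and, through the converged $v_{k}$ and $\rho_{k}$, by the entire profile --- none of which is frozen merely because $\dot{\pi}_{k}(s,a_{k})=0$. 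This matters because complementary slackness forces every $\pi^{\star}\in\Pi_1$ to sit exactly on such a face for each supported action, so arbitrarily small perturbations can exit $H$, where $\Delta$ may be negative and your candidate Lyapunov function loses positive semidefiniteness. Either establish invariance of $H$ by controlling $\dot{\Omega}$ directly, or fall back to the paper's local sign argument on the drift coordinates, which sidesteps $H$-invariance entirely.
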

\begin{proof}
First we show statement~1) holds. Since the set $\Pi_1$ is in the feasible set $H$ of Problem~\ref{prob:ARNE-NLP} defined in Eqn.~\eqref{eq:feasible}, for any $\pi^{\star} \in \Pi_1$, there exists some $a_k \in \A_{k}(s),~s \in \mathbf{S}$ that satisfy $\Omega_{k, \pi_{-k}}^{s, a_{k}} \geq 0$. Let $B_{\zeta}(\pi^{\star}) = \{\pi \in L \vert \norm{\pi - \pi^{\star}} < \zeta\}$. Then, for any $\pi \in B_{\zeta}(\pi^{\star}) \setminus \Pi_1$, there exists a $\zeta > 0$ such that $\Omega_{k, \pi_{-k}}^{s, a_{k}} > 0$ which yields $\frac{\partial\Delta(\pi)}{\partial\pi_{k}(s,a_{k})} > 0$. This implies $\text{sgn}\left(\frac{\partial\Delta(\pi)}{\partial\pi_{k}(s,a_{k})}\right) > 0$.

Hence, $\bar{\Gamma}\left(-\sqrt{\pi_{k}(s,a_{k})} \big|\Omega_{k, \pi_{-k}}^{s, a_{k}}\big| \text{sgn}\left(\frac{\partial\Delta(\pi)}{\partial\pi_{k}(s,a_{k})}\right)\right) < 0$ for any $\pi \in B_{\zeta}(\pi^{\star}) \setminus \Pi_1$. This implies that ${\pi}_{k}(s,a_{k})$ will decrease when moving away from $\pi^{\star} \in \Pi_1$. This proves $\pi^{\star} \in \Pi_1$ is an stable equilibrium point of the system of ODEs given in Eqn.~\eqref{eq:ODE_policy}.

To show statement~2) is true, we first note that for any $\pi^{\star} \in \Pi_2$, there exists some $a_k \in \A_{k}(s),~s \in \mathbf{S}$ such that $\Omega_{k, \pi_{-k}}^{s, a_{k}} < 0$. Then, for any $\pi \in B_{\zeta}(\pi^{\star}) \setminus \Pi_2$, there exists a $\zeta > 0$ such that $\Omega_{k, \pi_{-k}}^{s, a_{k}} < 0$ which yields $\frac{\partial\Delta(\pi)}{\partial\pi_{k}(s,a_{k})} < 0$. This implies $\text{sgn}\left(\frac{\partial\Delta(\pi)}{\partial\pi_{k}(s,a_{k})}\right) < 0$.
 
 Therefore, $\bar{\Gamma}\left(-\sqrt{\pi_{k}(s,a_{k})} \big|\Omega_{k, \pi_{-k}}^{s, a_{k}}\big| \text{sgn}\left(\frac{\partial\Delta(\pi)}{\partial\pi_{k}(s,a_{k})}\right)\right) > 0$ for any $\pi \in B_{\zeta}(\pi^{\star}) \setminus \Pi_2$. This implies that ${\pi}_{k}(s,a_{k})$ will increase when moving away from $\pi^{\star} \in \Pi_2$. This proves $\pi^{\star} \in \Pi_2$ is an unstable equilibrium point of the system of ODEs in Eqn.~\eqref{eq:ODE_policy} and completes the proof. 
\end{proof}

Theorem~VI.12 gives the convergence 
of the policy iterates to the set of stable equilibrium points 
in step 3) (step~4)).

\begin{theorem}\label{thm:policy-convergence}
Consider the RL-ARNE algorithm presented in Algorithm~\ref{algo}. Then the policy iterates $\pi_{k}^{n}(s,a_{k})$ for all $a_k \in \A_{k}(s)$, $s \in \mathbf{S}$, and $k \in \{D,~ A\}$ converge to a stable equilibrium point $\pi^{\star} = (\pi_{\sD}^{\*}, \pi_{\sA}^{\*}) \in \Pi_1$.
\end{theorem}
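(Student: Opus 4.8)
The plan is to recognize that the policy recursion in line~12, rewritten as Eqn.~\eqref{eq:extended_policy}, is a projected stochastic approximation iterate running on the slowest time scale $\delta_{\pi}^{n}$, and then to invoke the Kushner--Clark convergence result of Proposition~\ref{prop:KC_Lemma}. First I would exploit the time-scale separation $\delta_{v}^{n}=\delta_{\epsilon}^{n}\gg\delta_{\rho}^{n}\gg\delta_{\pi}^{n}$ enforced by the step-size choice: from the viewpoint of the policy iterate the value functions $v_{k}$, the average rewards $\rho_{k}$, and the gradient estimates $\epsilon_{k}^{n}(s,a_{k})$ are quasi-equilibrated. Theorem~\ref{thm:convergence} already establishes convergence of $v_{k}^{n}(s)$ and $\rho_{k}^{n}$, and Theorem~\ref{thm:gradient_convergence} gives $-\epsilon_{k}^{n}(s,a_{k})\to\frac{\partial\Delta(\pi)}{\partial\pi_{k}(s,a_{k})}$, so on the slow time scale the recursion reduces exactly to Eqn.~\eqref{eq:extended_policy} with these converged quantities substituted, and its driving vector field is the continuous descent direction appearing in the ODE of Eqn.~\eqref{eq:ODE_policy}.

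Next I would verify the three hypotheses of Proposition~\ref{prop:KC_Lemma} for the iterate in Eqn.~\eqref{eq:extended_policy}, taking $\Gamma$ as the projection operator $\Theta$ onto the compact convex probability simplex $L$. Condition~1 holds because $\delta_{\pi}^{n}$ is chosen to satisfy Assumption~\ref{assmp:step-size}. Condition~3 is immediate: comparing Eqn.~\eqref{eq:extended_policy} with the generic form Eqn.~\eqref{eq:KC_itr} shows there is no residual bias term, i.e.\ $\kappa^{n}=0$. The substantive point is Condition~2, the almost-sure summability of the martingale noise $w_{\pi}^{n}$. Since $\mathbb{E}(w_{\pi}^{n})=0$ and $w_{\pi}^{n}$ has bounded second moment — the rewards $r_{k}$ are finite and $v_{k}^{n}$, $\rho_{k}^{n}$ are bounded by Lemma~\ref{v_rho_bound}, while $\sqrt{\pi_{k}^{n}(s,a_{k})}$ and the continuous $\text{sgn}(\cdot)$ factor are bounded on $L$ — I would apply Doob's inequality exactly as in Eqn.~\eqref{eq:doob} to conclude $\lim_{n\to\infty}\big(\sup_{\bar{n}>n}\big|\sum_{l=n}^{\bar{n}}\delta_{\pi}^{l}w_{\pi}^{l}\big|\big)=0$ almost surely.

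With all hypotheses in place, Proposition~\ref{prop:KC_Lemma} yields almost-sure convergence of the policy iterates to the set of asymptotically stable equilibrium points of the ODE in Eqn.~\eqref{eq:ODE_policy}. I would then close the argument by identifying this stable set: the valid descent property of Lemma~\ref{lem:valid-descent} together with Lemma~\ref{lem:stabel-Equi} establishes that the asymptotically stable equilibria of Eqn.~\eqref{eq:ODE_policy} are exactly the points of $\Pi_{1}=\Pi\cap H$, whereas the points of $\Pi_{2}$ are unstable and hence excluded by the proposition. Therefore $\pi_{k}^{n}(s,a_{k})$ converges almost surely to some $\pi^{\star}=(\pi_{\sD}^{\*},\pi_{\sA}^{\*})\in\Pi_{1}$, which is the claim.

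The main obstacle I anticipate is the rigorous justification of Condition~2 together with the exclusion of the unstable set $\Pi_{2}$. The noise bound must be made uniform over the simplex and coupled with the Doob estimate, and — more delicately — one must argue that the stochastic perturbations $w_{\pi}^{n}$ prevent the iterate from being trapped at an unstable equilibrium; this is precisely what is encoded by taking the limit set in Proposition~\ref{prop:KC_Lemma} to be the stable set $\Pi_{1}$ identified in Lemma~\ref{lem:stabel-Equi}. The time-scale-separation reduction in the first step also requires care, since it implicitly relies on the boundedness of all iterates (Lemma~\ref{v_rho_bound}) and on the Lipschitz and asymptotic-stability structure underlying Proposition~\ref{Prop:Borkar}.
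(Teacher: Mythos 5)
Your proposal is correct and follows essentially the same route as the paper: both rewrite line~12 as the projected iterate of Eqn.~\eqref{eq:extended_policy}, verify the three hypotheses of Proposition~\ref{prop:KC_Lemma} (step-size via Assumption~\ref{assmp:step-size}, noise summability via the Doob inequality exactly as in Eqn.~\eqref{eq:doob}, and $\kappa^{n}=0$), and conclude convergence to the stable set $\Pi_{1}$ characterized by Lemma~\ref{lem:stabel-Equi}. Your explicit treatment of the time-scale separation and of the exclusion of $\Pi_{2}$ is if anything slightly more careful than the paper's, which leaves those points implicit in the surrounding four-step outline.
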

\begin{proof}
Recall $ w^{n}_{\pi} = \sqrt{\pi_{k}^{n}(s,a_{k})} \left[ \big|\bar{\Omega}_{k}^{s} \big|- \big| \Omega_{k,\pi_{-k}}^{s,a_{k}}\big|\right]\text{sgn}\left(\frac{\partial\Delta(\pi^{n})}{\partial\pi_{k}^{n}(s,a_{k})}\right)$.
We invoke Proposition~\ref{prop:KC_Lemma} to prove the convergence of  policy iterates,  $\pi_{k}^{n}(s,a_{k})$. Step-size $\delta_{\pi}^{n}$ is chosen such that condition 1) in Proposition~\ref{prop:KC_Lemma} is satisfied. Validity of condition 2) can be shown as follows.
	
	\begin{equation}\label{eq:doob2}
	\mathbb{ E}\left(\lim\limits_{n \rightarrow \infty} \left(\sup\limits_{\bar{n} > n}\left|\sum\limits_{l = n}^{\bar{n}}\delta^{l}_{\pi}w_{\pi}^{l}\right|^{2}\right)\right)  \leq 4\lim\limits_{n \rightarrow \infty}\sum\limits_{l = n}^{ \infty}(\delta^{l}_{\pi})^{2}\mathbb{ E}(|w_{\pi}^{l}|^{2}) = 0.
	\end{equation}
	 Inequality in Eqn.~\eqref{eq:doob2} follows by Doob inequality \cite{metivier1984applications}. Equality in Eqn.~\eqref{eq:doob2} follows by choosing $\delta_{\pi}^{n}$ to satisfy Assumption~\ref{assmp:step-size} and observing $\mathbb{ E} (|w^{l}_{\pi}|^{2}) < \infty$ as $r_{k}$, $v^{n}_{k}$, and $\rho^{n}_{k}$ are bounded in DIFT-APT game. Comparing Eqn.~\eqref{eq:extended_policy} with Eqn.~\eqref{eq:KC_itr}, $\kappa = 0$. Therefore, from Proposition~\ref{prop:KC_Lemma}, as $n \rightarrow \infty$,  the policy iterates $\pi_{k}^{n}(s,a_{k})$ for all $a_k \in \A_{k}(s)$, $s \in \mathbf{S}$, and $k \in \{D,~ A\}$ converge to a stable equilibrium point $\pi^{\star} \in \Pi_1$. This completes the proof showing the convergence of policy iterates $\pi_{k}^{n}(s,a_{k})$.
\end{proof}

Next theorem proves the convergence of $\pi_{k}^{n}(s,a_{k})$ given in line~12 of Algorithm~\ref{algo}, to an ARNE in DIFT-APT game. 

\begin{theorem}\label{thm:policy ARNE}
 Consider $\Omega_{k, \pi_{-k}}^{s, a_{k}}$ and $\Delta(\pi)$ given in Eqns.~\eqref{eq:Omega} and~\eqref{eq:Delta}, respectively. A converged policy $(\pi_{\sD}^{\*}, \pi_{\sA}^{\*})$ of RL-ARNE algorithm presented in Algorithm~\ref{algo}  forms an ARNE in DIFT-APT game.
\end{theorem}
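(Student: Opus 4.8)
The plan is to verify that the converged policy $(\pi_{\sD}^{\*}, \pi_{\sA}^{\*})$ satisfies the three necessary and sufficient conditions for ARNE collected in Corollary~\ref{cor:conditions}, namely \eqref{eq:GNScon1}, \eqref{eq:GNScon2}, and \eqref{eq:GNScon3}. By Theorem~\ref{thm:policy-convergence}, the policy iterates converge to a stable equilibrium point $\pi^{\star} = (\pi_{\sD}^{\*}, \pi_{\sA}^{\*}) \in \Pi_1$, where $\Pi_1 = \Pi \cap H$ and $H$ is the feasible set of Problem~\ref{prob:ARNE-NLP} given in \eqref{eq:feasible}. Since membership in $H$ already encodes two of the three conditions, the substance of the argument reduces to establishing the third.

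First I would observe that because $\pi^{\star} \in \Pi_1 \subseteq H$, the definition of $H$ in \eqref{eq:feasible} immediately yields $\Omega_{k, \pi_{-k}}^{s, a_{k}} \geq 0$ for all $k \in \{D, A\}$, $s \in {\bf S}$, and $a_{k} \in \A_{k}(s)$, which is exactly \eqref{eq:GNScon1}, together with the simplex constraints $\sum_{a_{k}}\pi_{k}(s,a_{k}) = 1$ and $\pi_{k}(s,a_{k}) \geq 0$, which is exactly \eqref{eq:GNScon3}. Thus only condition \eqref{eq:GNScon2}, i.e.\ $\Delta(\pi^{\star}) = 0$, remains. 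Since every summand $\Omega_{k, \pi_{-k}}^{s, a_{k}}\pi_{k}^{\star}(s,a_{k})$ in \eqref{eq:Delta} is a product of two nonnegative quantities at $\pi^{\star} \in H$, one direction is free: $\Delta(\pi^{\star}) \geq 0$. Note also that the minimum of $\Delta$ over $H$ equals $0$, attained at any ARNE, whose existence is guaranteed by Corollary~\ref{cor:exist_Gamma}.

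The key step is the reverse inequality $\Delta(\pi^{\star}) \leq 0$, which I would argue by contradiction. Suppose $\Delta(\pi^{\star}) > 0$. Then $\pi^{\star}$ lies in $H$ with strictly positive objective, so the hypotheses of Lemma~\ref{lem:valid-descent} hold and the policy update moves in a strict descent direction of $\Delta$, forcing $\Delta$ to strictly decrease along the trajectory. This contradicts the fact that $\pi^{\star}$ is a limit point of the iterates and a stable equilibrium of the associated ODE \eqref{eq:ODE_policy}, at which the dynamics are stationary and the descent direction must vanish. Hence $\Delta(\pi^{\star}) = 0$, giving \eqref{eq:GNScon2}. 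Combining this with the two conditions verified above, all of \eqref{eq:GNScon1}--\eqref{eq:GNScon3} hold, so by Corollary~\ref{cor:conditions} the pair $(\pi_{\sD}^{\*}, \pi_{\sA}^{\*})$ is an ARNE of the DIFT-APT game.

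I expect the main obstacle to be reconciling the strict-descent property of Lemma~\ref{lem:valid-descent} with the stationarity of $\pi^{\star}$: one must check that the descent direction $\sqrt{\pi_{k}^{\star}(s,a_{k})}\,\big|\Omega_{k, \pi_{-k}}^{s, a_{k}}\big|\,\text{sgn}\!\left(\frac{\partial\Delta(\pi^{\star})}{\partial\pi_{k}(s,a_{k})}\right)$ is genuinely nonzero whenever $\Delta(\pi^{\star}) > 0$, and that the projection $\bar{\Gamma}$ does not spuriously cancel it. The first point holds because $\Delta(\pi^{\star}) > 0$ forces some component with $\pi_{k}^{\star}(s,a_{k}) > 0$ and $\Omega_{k, \pi_{-k}}^{s, a_{k}} > 0$; since all $\Omega$-terms are nonnegative on $H$, the gradient $\frac{\partial\Delta(\pi^{\star})}{\partial\pi_{k}(s,a_{k})} = \sum_{\bar{k}}\Omega_{\bar{k}, \pi_{-k}}^{s, a_{k}}$ is then strictly positive, so the sign term does not annihilate the update. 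Ruling out the projection cancellation is precisely where the characterization of $\Pi_1$ as the set of stable equilibria in Lemma~\ref{lem:stabel-Equi} carries the weight.
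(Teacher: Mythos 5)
Your proposal is correct and follows essentially the same route as the paper: both reduce the claim to the three conditions of Corollary~\ref{cor:conditions}, obtain \eqref{eq:GNScon1} and \eqref{eq:GNScon3} from $\pi^{\star}\in\Pi_1\subseteq H$ via Theorem~\ref{thm:policy-convergence}, and establish the remaining complementarity condition by a contradiction with stationarity of the converged policy under the projected ODE~\eqref{eq:ODE_policy}. The only difference is organizational: the paper argues componentwise that $\sqrt{\pi_{k}(s,a_{k})}\,\Omega_{k,\pi_{-k}}^{s,a_{k}}=0$ via a two-case split on $\pi_{k}(s,a_{k})=1$ versus $0<\pi_{k}(s,a_{k})<1$ (invoking the converged value-iterate identity in the first case), whereas you run the contradiction at the level of the aggregate objective $\Delta(\pi^{\star})$, which is equivalent given the nonnegativity of the summands on $H$.
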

\begin{proof}

In the following, we show any converged policy $\pi^{\star} = (\pi_{\sD}^{\*}, \pi_{\sA}^{\*})$ returned by RL-ARNE algorithm presented in Algorithm~\ref{algo} will satisfy conditions~\eqref{eq:GNScon1}-\eqref{eq:GNScon3} in Corollary~\ref{cor:conditions} and thus $\pi^{\star}$ forms an ARNE in DIFT-APT game.

Recall from Theorem~\ref{thm:policy-convergence}, the policy iterates $\pi_{k}^{n}(s,a_{k})$ for all $a_k \in \A_{k}(s)$, $s \in \mathbf{S}$, and $k \in \{D,~ A\}$ converge to a stable equilibrium point $\pi^{\star} \in \Pi_1$. Also, recall $\Pi$ denotes the set of limit points associated with the system of ODEs in Eqn.~\eqref{eq:ODE_policy} and $L = \{\pi \vert \sum_{a_{k} \in \A_{k}(s)}\pi_{k}(s,a_{k}) = 1, \pi_{k}(s,a_{k})  \geq  0, \text{ for all } a_k \in \A_{k}(s),~s \in \mathbf{S}\}$. Then, from the definition of the set $\Pi_1$, any converged $\pi^{\star}$ will satisfy conditions~\eqref{eq:GNScon1} and~\eqref{eq:GNScon3}, since $\pi^{\star} \in \Pi_1 = \Pi \cap H$ yields $\pi^{\star} \in H$, where $H = \{\pi \in L \vert \Omega_{k, \pi_{-k}}^{s, a_{k}} \geq 0,~\text{for all}~a_k \in \A_{k}(s),~s \in \mathbf{S},~k \in \{D, A\}\}$.

Then it suffices to show any $\pi^{\star} \in \Pi_1$ will yield  $\sqrt{\pi_{k}(s,a_{k})}\Omega_{k, \pi_{-k}}^{s, a_{k}} = 0$ since this proves condition~\eqref{eq:GNScon3} in Corollary~\ref{cor:conditions}. We show this by contradiction arguments. 

Note that $\bar{\Gamma}\left(-\sqrt{\pi_{k}(s,a_{k})} \big|\Omega_{k, \pi_{-k}}^{s, a_{k}}\big| \text{sgn}\left(\frac{\partial\Delta(\pi)}{\partial\pi_{k}(s,a_{k})}\right)\right) = 0$ as $\pi^{\star}$ forms a set of equilibrium polices associated with the system of ODEs in Eqn.~\eqref{eq:ODE_policy}. Then suppose there exists a policy $0 < \pi_{k}(s,a_{k}) \leq 1$ for some $\bar{a}_k \in \A_{k}(s)$, $s \in \mathbf{S}$, and $k \in \{D,~ A\}$ such that $\sqrt{\pi_{k}(s,\bar{a}_{k})}\Omega_{k, \pi_{-k}}^{s, a_{k}} \neq 0$.

Now consider the following two cases.

\noindent {\bf Case~I: $\pi_{k}(s,\bar{a}_{k}) = 1$ and $\Omega_{k, \pi_{-k}}^{s, \bar{a}_{k}} \neq 0$}. 

Recall $F(v_{k},\rho_{k}) =[F(v_{k},\rho_{k})(s)]_{s \in {\bf S}}$ and $ F(v_{k},\rho_{k})(s) = \sum\limits_{s' \in {\bf S}}\mathbf{ P}(s'|s,\pi) [ r_{k}(s,d,a,s') - \rho^{n}_{k} + v_{k}(s')] $.
Then under Case~I, we obtain the following:
$$\sum_{a_k \in \A_{k}(s)} \pi_{k}(s,a_{k})\Omega_{k, \pi_{-k}}^{s, a_{k}}  = \pi_{k}(s,\bar{a}_{k})\Omega_{k, \pi_{-k}}^{s, \bar{a}_{k}} = 0,$$
where the first equality is due to $\pi_{k}(s,\bar{a}_{k}) = 0$ and the second equality is due to the convergence of the value iterates to their true values (i.e., as $n \rightarrow  \infty$, $v_k \rightarrow F(v_{k},\rho_{k})$) which is proved in Theorem~\ref{thm:convergence}.

Further, as $\pi_{k}(s,\bar{a}_{k}) = 1$ this yields $\Omega_{k, \pi_{-k}}^{s, \bar{a}_{k}} = 0$, which contradicts the condition $\Omega_{k, \pi_{-k}}^{s, \bar{a}_{k}} \neq 0$ in Case~I.

\noindent {\bf Case~II: $0 < \pi_{k}(s,\bar{a}_{k}) < 1$ and $\Omega_{k, \pi_{-k}}^{s, \bar{a}_{k}} \neq 0$}. 

Under this case we get
\begin{eqnarray*}
\bar{\Gamma}\left(-\sqrt{\pi_{k}(s,a_{k})} \big|\Omega_{k, \pi_{-k}}^{s, a_{k}}\big| \text{sgn}\left(\frac{\partial\Delta(\pi)}{\partial\pi_{k}(s,a_{k})}\right)\right)  \\ = -\sqrt{\pi_{k}(s,a_{k})} \big|\Omega_{k, \pi_{-k}}^{s, a_{k}}\big| \text{sgn}\left(\frac{\partial\Delta(\pi)}{\partial\pi_{k}(s,a_{k})}\right) \neq 0,
\end{eqnarray*} due to conditions in given in the Case~II and assuming\footnote{This can be achieved by repeating an action in Algorithm~\ref{algo} when $\text{sgn}(\cdot) = 0$. A similar approach has been proposed in the algorithm that computes an NE of discounted stochastic games in \cite{prasad2015two}.} $\text{sgn}(\cdot) \neq 0$. However this contradicts with our initial observation of $\bar{\Gamma}\left(-\sqrt{\pi_{k}(s,a_{k})} \big|\Omega_{k, \pi_{-k}}^{s, a_{k}}\big| \text{sgn}\left(\frac{\partial\Delta(\pi)}{\partial\pi_{k}(s,a_{k})}\right)\right) = 0$.

Therefore, by contradiction, there does not exist any policy $0 < \pi_{k}(s,a_{k}) \leq 1$ for some $\bar{a}_k \in \A_{k}(s)$, $s \in \mathbf{S}$, and $k \in \{D,~ A\}$ such that $\sqrt{\pi_{k}(s,\bar{a}_{k})}\Omega_{k, \pi_{-k}}^{s, a_{k}} \neq 0$.
This proves condition~\eqref{eq:GNScon3} in Corollary~\ref{cor:conditions} holds. 

Since now we have shown conditions~\eqref{eq:GNScon1}-\eqref{eq:GNScon3} in Corollary~\ref{cor:conditions} hold, a converged policy $(\pi_{\sD}^{\*}, \pi_{\sA}^{\*})$ of RL-ARNE algorithm presented in Algorithm~\ref{algo}  forms an ARNE in DIFT-APT game.
\end{proof}

\begin{remark}
Note that RL-ARNE algorithm presented in Algorithm~VI and the associated convergence proofs given in Section~VI.B extend to  $K$-player, non-zero sum, average reward unichain stochastic games. Unichain property is a mild regularity assumption compared to other regularity conditions such as ergodicity or irreducibility \cite{bhatnagar2009natural}.
\end{remark}
\section{Simulations}\label{sec:Simulations}
In this section we test Algorithm~\ref{algo} on a real-world attack dataset corresponding to a ransomware attack. We first provide a brief explanation on the dataset and the extraction of the IFG from the dataset. Then we explain the choice of parameters used in our simulations and present the simulation results.

The dataset consists of system logs with both benign and malicious information flows recorded in a \emph{Linux} computer threatened by a ransomware attack. The goal of the ransomware attack is to open and read all the files in the $./home$ directory of the victim computer and delete all of these files after writing them into an encrypted file named $ransomware.encrypted$. System logs were recorded by RAIN system \cite{JiLeeDowWanFazKimOrsLee-17} and the targets of the ransomware attack (destinations) were annotated in the system logs. Two network sockets that indicate series of communications with external IP addresses in the recorded system logs were identified as the entry points of the attack. The attack consists of three stages, where stage $1$ correspond to privilege escalation, stage $2$ relate to lateral movement of the attack, and stage $3$ represent achieving the goal of encrypting and deleting $./home$ directory.  Immediate conversion of the system logs resulted in an information flow graph, $\bar{\G}$, with $173$ nodes and $2426$ edges.

The attack related subgraph was extracted from $\bar{\G}$ using the following graph pruning steps. 
\begin{enumerate}
	\item For each pair of nodes in $\bar{\G}$ (e.g., process and file, process and process), collapse any existing multiple edges between two nodes to a single directed edge representing the direction of the collapsed edges.
	\item Extract all the nodes in $\bar{\G}$   that have at least one information flow path from an entry point of the attack to a destination of stage one of the attack. 
	\item Extract all the nodes in $\bar{\G}$ that have at least one information flow path from a destination of stage $j$ to a  destination of a stage $j'$, for all $j, j' \in \{1, \ldots, M\}$ such that $j \neq j'$.
	\item From $\bar{\G}$, extract the subgraph corresponding to the entry points, destinations, and the set of nodes extracted in steps $2)$ and $3)$.
	\item Combine all the file-related nodes in the extracted subgraph corresponding to a directory into a single node $(e.g., ./home, ./user)$ in the victim's computer.
	\item If the resulting subgraph contains any cycles use \emph{node versioning} techniques \cite{milajerdi2019holmes} to remove cycles while preserving the information flow dependencies in the graph.
\end{enumerate}
The resulting graph is called as the pruned IFG. 
The pruned IFG corresponding to the ransomware attack contains $18$ nodes and $29$ edges (Figure~\ref{fig:IFG}). 
\begin{figure}[h]
	\centering
\includegraphics[width=5.3cm]{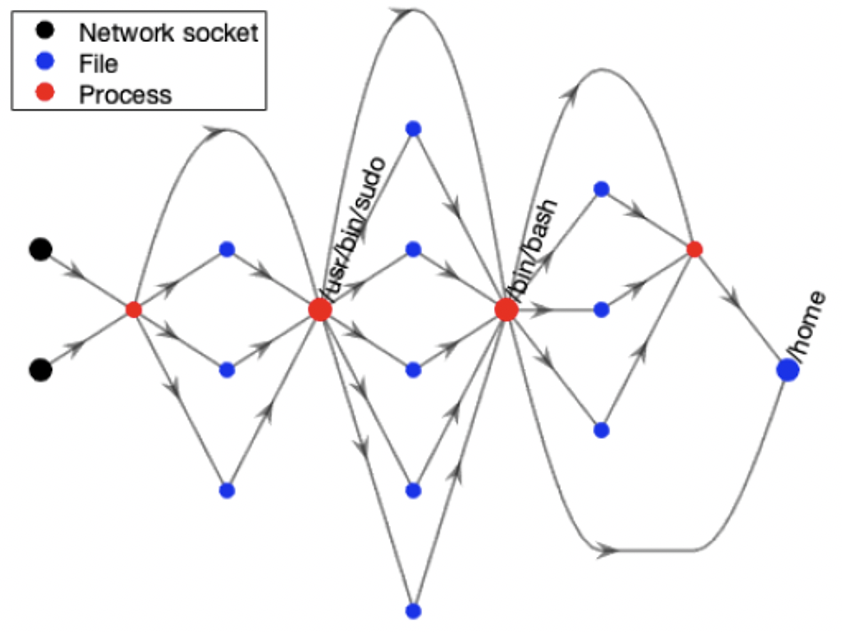}
\caption{ \small IFG of ransomware attack. Nodes of the graph are color coded to illustrate their respective types (network socket, file, and process). Two network sockets are identified as the entry points of the ransomware attack. Destinations of the attack ($/usr/bin/sudo, /bin/bash, /home$) are labeled in the graph.}\label{fig:IFG}
\end{figure}
Simulations use the following cost, reward, and penalty parameters. 
Cost parameters: for all $s^j_i\in {\bf S}$ such that $u_{i} \notin \D_{j}$,  $\C_{D}(s_{i}^{j} ) = -1$ for $j = 1$, $\C_{D}(s_{i}^{j}) = -2$ for $j = 2$, and $\C_{D}(s_{i}^{j} ) = -3$ for $j = 3$. For all other states, $s \in {\bf S}$, $\C_{D}(s) = 0$. Rewards: $\alpha_{\sD}^{1} = 40$, $\alpha_{\sD}^{2} = 80$, $\alpha_{\sD}^{3} = 120$, $\beta_{\sA}^{1} = 20$, $\beta_{\sA}^{2} = 40$, $\beta_{\sA}^{3} = 60$, $\sigma_{\sD}^{1} = 30$, $\sigma_{\sD}^{2} = 50$, and $\sigma_{\sD}^{3} = 70$. Penalties: $\alpha_{\sA}^{1} = -20$, $\alpha_{\sA}^{2} = -40$, $\alpha_{\sA}^{3} = -60$, $\beta_{\sD}^{1} = -30$, $\beta_{\sD}^{2} = -60$, $\beta_{\sD}^{3} = -90$, $\sigma_{\sA}^{1} = -30$, $\sigma_{\sA}^{2} = -50$, and $\sigma_{\sA}^{3} = -70$.
Learning rates used in the simulations are: $\delta_{v}^{n}  =  \delta_{\epsilon}^{n} = 0.5$ if $n < 7000$ and $\delta_{v}^{n}  =  \delta_{\epsilon}^{n} = \frac{1.6}{\kappa(s,n)}$, otherwise. $\delta_{\rho} = \delta_{\pi}^{n} = 1$, if $n < 7000$ and $\delta_{\rho} = \frac{1}{1+\tau(n)\log(\tau(n))}$, $\delta_{\pi}^{n} = \frac{1}{\tau(n)}$, otherwise.


Note that the learning rates remain constant until iteration $7000$ and then start decaying. We observed that setting learning rates in this fashion helps the finite time convergence of the algorithm. Here, the term $\kappa(s,n)$ in  $\delta_{v}^{n}$ and  $\delta_{\epsilon}^{n}$ denotes the total number of times a state $s \in {\bf S}$ is visited from $7000^\text{th}$ iteration onwards in Algorithm~\ref{algo}. Hence, in our simulations, the learning rates $\delta_{v}^{n}$  of  $v^{n}_{k}(s)$ iterates and $\delta_{\epsilon}^{n}$ of the $\epsilon_{k}^{n+1}(s,a_{k})$ iterates depend on the iteration $n$ and the state visited at iteration $n$. The term $\tau(n) = n - 6999$.

\begin{figure}[h]
	\centering
	\includegraphics[width=7cm]{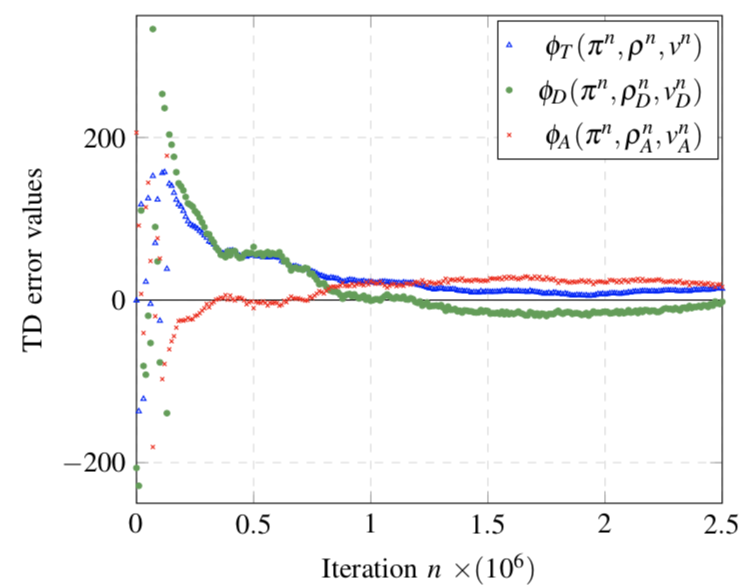}
		\vspace{-3mm}
	\caption{ \small Plots of total Temporal Difference error (TD error), $\phi_{T}(\pi^{n}, \rho^{n}, v^{n})$, DIFT's TD error $\phi_{D}(\pi^{n}, \rho_{D}^{n}, v_{D}^{n})$, and APT's TD error $\phi_{A}(\pi^{n}, \rho_{A}^{n}, v_{A}^{n})$ evaluated at iterations $n = 1, 500, 1000, \ldots, 2.5 \times 10^{6}$ of Algorithm~\ref{algo} for ransomware attack. Values of TD errors at the $n^{\text{th}}$ iteration depend on the policies $\pi^{n}$, average rewards $\rho^{n}$, and value functions $v^{n}$ of DIFT and APT at the $n^{\text{th}}$ iteration. $\phi_{D}(\pi^{n}, \rho_{D}^{n}, v_{D}^{n})$ and $\phi_{A}(\pi^{n}, \rho_{A}^{n}, v_{A}^{n})$ are defined in Eqn.~\eqref{eq:phi_definitions} and $\phi_{T}(\pi^{n}, \rho^{n}, v^{n}) = \phi_{D}(\pi^{n}, \rho_{D}^{n}, v_{D}^{n}) + \phi_{A}(\pi^{n}, \rho_{A}^{n}, v_{A}^{n})$.  }    	 
	\label{plt:objective}
\end{figure}

Conditions \eqref{eq:GNScon1} and \eqref{eq:GNScon2} in Corollary~\ref{cor:conditions} are used to validate the convergence of Algorithm~\ref{algo} to an ARNE of the DIFT-APT game. Let $\phi_{T}(\pi, \rho, v) = \phi_{D}(\pi, \rho_{D}, v_{D}) + \phi_{A}(\pi, \rho_{A}, v_{A})$, where $\pi = (\pi_{D}, \pi_{A})$, $\rho = (\rho_{D}, \rho_{A})$, $v = (v_{D}, v_{A})$. Here, $\phi_{k}(\pi, \rho_{k}, v_{k})$, for $k \in \{D, A\}$, is given by
\begin{equation}\label{eq:phi_definitions}
\begin{split}
\phi_{k}(\pi, \rho_{k}, v_{k}) =& 
\sum\limits_{s \in {\bf S}}\sum\limits_{a_{k} \in \A_{k}(s)}\Big(\rho_{k}  + v_{k}(s) - r_{k}(s,a_{k},\pi_{-k})  \\ &- \sum\limits_{s' \in {\bf S}}\mathbf{P}(s'|s,a_{k},\pi_{-k})v_{k}(s')\Big)\pi_{k}(s,a_{k}) = 0
\end{split}
\end{equation}
We refer to $\phi_{T}(\pi, \rho, v)$, $\phi_{D}(\pi, \rho_{D}, v_{D})$, and $\phi_{A}(\pi, \rho_{A}, v_{A})$ as the total Temporal Difference error (TD error) , DIFT's TD error, and APT's TD error, respectively. Then conditions \eqref{eq:GNScon1} and \eqref{eq:GNScon2} in Corollary~\ref{cor:conditions}  together imply that a policy pair forms an ARNE if and only if $\phi_{D}(\pi, \rho_{D}, v_{D}) = \phi_{A}(\pi, \rho_{A}, v_{A}) = 0$. Consequently, at ARNE $\phi_{T}(\pi, \rho, v) = 0$. Figure~\ref{plt:objective} plots $\phi_{T}$, $\phi_{D}$, and $\phi_{A}$  corresponding to the policies given by Algorithm~\ref{algo} at  iterations $n = 1, 500, \ldots, 2.5 \times 10^{6}$. The plots show that $\phi_{T}$, $\phi_{D}$  and $\phi_{A}$ converge very close to $0$ as $n$ increases. 


Figure~\ref{plt:reward} plots the average reward values of DIFT and APT in Algorithm~\ref{algo} at $n = 1, 500, \ldots, 2.5 \times 10^{6}$. Figure~\ref{plt:reward} shows that $\rho_{D}^{n}$ and $\rho_{A}^{n}$ converge as the iteration count $n$ increases. 
\begin{figure}[h]
	\centering
	\includegraphics[width=7cm]{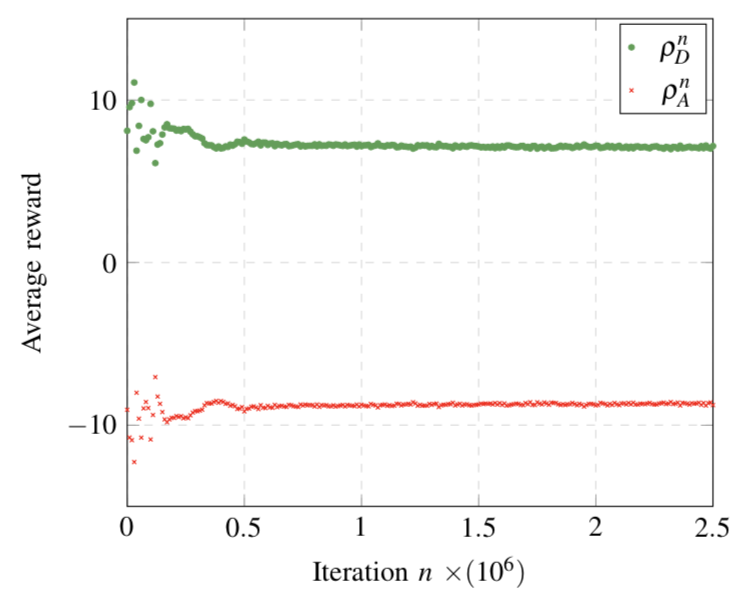}
		\vspace{-3mm}
\caption{\small Plots of the average rewards of DIFT $(\rho_{D}^{n})$ and APT $(\rho_{A}^{n})$ at a iteration $n \in \{1, 500, 1000, \ldots 2.5 \times 10^{6}\}$ of Algorithm~\ref{algo}. Average rewards at the $n^{\text{th}}$ iteration depend on the policies $(\pi^{n})$ of DIFT and APT.}\label{fig:main}
	\label{plt:reward}
\end{figure}

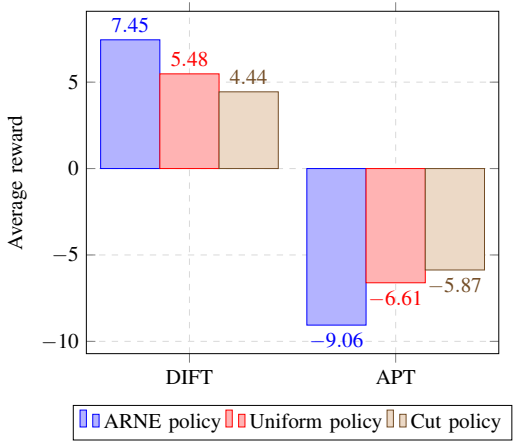
\begin{figure}[H]
	\centering
	\begin{tikzpicture}[scale=0.8]
	\begin{axis}[
	ybar = 0pt,
	bar width =28,
	enlarge x limits=0.5,
	legend style={at={(0.5,-0.15)},
		anchor=north,legend columns=-1},
	ylabel={Average reward},
	symbolic x coords={DIFT,APT},
	xtick=data,
	nodes near coords,
	nodes near coords align={vertical},
	grid=major, 
	grid style={dashed,gray!30} 
	]
	\addplot coordinates {(DIFT,7.45) (APT,-9.06)};
	\addplot coordinates {(DIFT,5.48) (APT,-6.61)};
	\addplot coordinates {(DIFT,4.44) (APT,-5.87)};
	\legend{ARNE policy,Uniform policy,Cut policy}
	\end{axis}
	\end{tikzpicture}
	\caption{\small Comparison of the average rewards of DIFT and APT obtained by the converged policies in Algorithm~\ref{algo} (ARNE policy) against the average rewards of the players obtained by two other policies of DIFT: uniform policy and cut policy. Uniform policy: DIFT chooses an action at every state under a uniform distribution. Cut policy: DIFT performs security analysis at a destination related state, $s^{j}_{i}: u_{i} \in \D_{j}$, with probability one whenever the state of the game is an in-neighbor of that destination related state. }\label{plt:bar}
\end{figure}

Figure~\ref{plt:bar} compares the average rewards of the players corresponding to the converged policies in Algorithm~\ref{algo} (ARNE policy) against the average reward values of the players corresponding to two other policies of DIFT, i)~uniform policy and ii)~cut policy. Note that, in i) DIFT chooses an action at every state under a uniform distribution. Where as in ii)~DIFT performs security analysis at a destination related state, $s^{j}_{i}: u_{i} \in \D_{j}$, with probability one whenever the state of the game is an in-neighbor\footnote{A vertex $u_{i}$ is said to be an in-neighbor of a vertex $u_{i'}$, if there exists an edge $(u_{i}, u_{i'})$ in the directed graph.} of that destination related state.
APT's policy in both uniform policy and cut policy cases are maintained to be as same as in the case of ARNE policy case.  The results show that DIFT achieves a higher average reward under ARNE policy when compared to the uniform and cut policies. Further, results also suggest that APT gets a lower reward under the DIFT's ARNE policy when compared to the uniform and cut policies.   

\section{Conclusion}\label{sec:Conclusions}
In this paper we studied the problem of resource efficient and effective detection of Advanced Persistent Threats (APTs) using Dynamic Information Flow Tracking (DIFT) detection mechanism.
We modeled the strategic interactions between DIFT and APT as a nonzero-sum, average reward stochastic game. Our game model captures the security costs, false-positives, and false-negatives associated with DIFT to enable resource efficient and effective defense policies. Our model also incorporates the information asymmetry between DIFT and APT that arises from DIFT's inability to distinguish malicious flows from benign flows and APT's inability to know the locations where DIFT performs a security analysis. Additionally, the game has incomplete information as the transition probabilities (false-positive and false-negative rates) are unknown.
We proposed RL-ARNE to learn an Average Reward Nash Equilibrium (ARNE) of the DIFT-APT game. The proposed algorithm is a multiple-time scale stochastic approximation algorithm. We prove the convergence of RL-ARNE algorithm to an ARNE of the DIFT-APT game. 

We evaluated our game model and algorithm on a real-world ransomware attack dataset collected using RAIN framework. Our simulation results showed convergence of the proposed algorithm on the ransomware attack dataset. Further the results showed and validated the effectiveness of the proposed game theoretic framework for devising optimal defense policies to detect APTs. As future work we plan to investigate and model APT attacks by multiple attackers with different capabilities.

\section*{Acknowledgment}
The authors would like to thank Baicen Xiao at Network Security Lab (NSL) at University of Washington (UW) for the discussions on reinforcement learning algorithms.
 
\bibliographystyle{myIEEEtran}      
\bibliography{MURI_references}

\section*{Appendix}
\begin{lemma}\label{lem:gradient}
	Consider $\Omega_{k, \pi_{-k}}^{s, a_{k}}$ and $\Delta(\pi)$ given in Eqns.~\eqref{eq:Omega} and~\eqref{eq:Delta}, respectively. Then, $\frac{\partial\Delta(\pi)}{\partial\pi_{k}(s,a_{k})} = \sum\limits_{\bar{k} \in \{D,A\}} \Omega_{\bar{k}, \pi_{-k}}^{s, a_{k}}$.
\end{lemma}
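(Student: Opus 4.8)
The plan is to compute the partial derivative $\partial\Delta(\pi)/\partial\pi_{k}(s,a_{k})$ directly from the definition in Eqn.~\eqref{eq:Delta}, treating the value functions $v_{k}$ and the average rewards $\rho_{k}$ as constants. This freezing is justified by the time-scale separation: the policy iterates evolve on the slowest scale, so from their vantage point $v_{k}$ and $\rho_{k}$ are already equilibrated and do not vary with $\pi$. Consequently the only $\pi_{k}(s,a_{k})$-dependence of $\Delta(\pi)$ that must be differentiated is the explicit polynomial dependence on the policy entries appearing inside each $\Omega_{\bar k,\pi_{-\bar k}}^{\sigma,b_{\bar k}}$ and in the multiplicative factors $\pi_{\bar k}(\sigma,b_{\bar k})$.

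First I would observe that $\pi_{k}(s,a_{k})$ enters $\Delta(\pi)$ through two distinct channels. Channel~(a) is the explicit multiplier in the own summand $\bar k = k$, namely the term $\Omega_{k,\pi_{-k}}^{s,a_{k}}\,\pi_{k}(s,a_{k})$. Because $\Omega_{k,\pi_{-k}}^{s,a_{k}}$ depends, through $r_{k}(s,a_{k},\pi_{-k})$ and $\mathbf{P}(s'|s,a_{k},\pi_{-k})$, only on the opponent policy $\pi_{-k}$ and not on $\pi_{k}$ itself, differentiating this product contributes exactly $\Omega_{k,\pi_{-k}}^{s,a_{k}}$. Channel~(b) is the implicit dependence inside the opponent summand $\bar k\neq k$: there $\Omega_{\bar k,\pi_{-\bar k}}^{\sigma,b_{\bar k}}$ carries $\pi_{-\bar k}=\pi_{k}$, so it varies with $\pi_{k}(s,a_{k})$ whenever $\sigma=s$.

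For Channel~(b) I would use the affine (indeed linear) dependence of $r_{\bar k}(s,b_{\bar k},\pi_{k})$ and $\mathbf{P}(s'|s,b_{\bar k},\pi_{k})$ on $\pi_{k}$, as given by the expansions preceding Proposition~\ref{prop:Nes&Suf_01}. Differentiating these in $\pi_{k}(s,a_{k})$ simply selects the joint action $\bar a=(a_{k},b_{\bar k})$, after which I would sum over $b_{\bar k}\in\A_{\bar k}(s)$ weighted by $\pi_{\bar k}(s,b_{\bar k})$ (the explicit multiplier of the opponent summand, which does not depend on $\pi_{k}$). Collecting the surviving terms and re-assembling them through the template of Eqn.~\eqref{eq:Omega} reproduces the opponent's contribution $\Omega_{\bar k,\pi_{-k}}^{s,a_{k}}$. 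Adding the two channels then yields $\partial\Delta(\pi)/\partial\pi_{k}(s,a_{k})=\sum_{\bar k\in\{D,A\}}\Omega_{\bar k,\pi_{-k}}^{s,a_{k}}$.

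The differentiation itself is routine and linear; the main obstacle I expect is bookkeeping. One must correctly isolate the $\sigma=s$ contributions in the opponent summand, keep careful track of which factors depend on $\pi_{k}$ and which do not, and verify that the reassembled opponent contribution lines up with the compact $\Omega$ form, reconciling the baseline $\rho_{\bar k}+v_{\bar k}(s)$ terms (using $\sum_{b_{\bar k}\in\A_{\bar k}(s)}\pi_{\bar k}(s,b_{\bar k})=1$ to remove the portions that are constant in $\pi_{k}$). Once both channels are combined, the claimed identity follows.
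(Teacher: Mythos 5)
Your proposal is correct and follows essentially the same route as the paper's proof: differentiate $\Delta(\pi)$ directly from Eqn.~\eqref{eq:Delta}, obtain $\Omega_{k,\pi_{-k}}^{s,a_k}$ from the explicit multiplier in the own summand, and recover $\Omega_{-k,\pi_{-k}}^{s,a_k}$ from the opponent summand by exploiting the linearity of $r_{-k}(s,a_{-k},\pi_k)$ and $\mathbf{P}(s'|s,a_{-k},\pi_k)$ in $\pi_k$ and then summing over $a_{-k}$ weighted by $\pi_{-k}(s,a_{-k})$. Your explicit attention to the baseline terms $\rho_{-k}+v_{-k}(s)$ via the simplex identity is, if anything, a point the paper glosses over.
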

\begin{proof}
	Recall $k \in \{D, A\}$ and $-k =  \{D, A\} \setminus k$. 
	\begin{equation*}
	\Delta(\pi) = \sum\limits_{s \in {\bf S}}\left[\sum\limits_{a_{k} \in \A_{k}(s)} \Omega_{k, \pi_{-k}}^{s, a_{k}} \pi_{k}(s,a_{k}) + \sum\limits_{a_{-k} \in \A_{-k}(s)} \Omega_{-k, \pi_{k}}^{s, a_{-k}} \pi_{-k}(s,a_{-k})  \right].
	\end{equation*}
	Taking derivative with respect to $\pi_{k}(s,a_{k})$ in Eqn.~\eqref{eq:Delta} gives, 
	\begin{equation*}
	\frac{\partial\Delta(\pi)}{\partial\pi_{k}(s,a_{k})} =  \Omega_{k, \pi_{-k}}^{s, a_{k}} + \sum\limits_{a_{-k} \in \A_{-k}(s)} \frac{\partial\Omega_{-k, \pi_{k}}^{s, a_{-k}} }{\partial\pi_{k}(s,a_{k})}\pi_{-k}(s,a_{-k})
	\end{equation*}
	From Eqn.~\eqref{eq:Omega}, 
	\begin{equation*}
	\hspace{-2mm}\frac{\partial\Omega_{-k, \pi_{k}}^{s, a_{-k}} }{\partial\pi_{k}(s,a_{k})} \hspace{-0.5mm}= \rho_{-k} \hspace{-0.5mm}+ v_{-k}(s) \hspace{-0.5mm}- \hspace{-0.5mm} r_{-k}(s,a_{k},a_{-k})\hspace{-0.5mm} -\hspace{-1.5mm} \sum\limits_{s' \in {\bf S}}\hspace{-0.5mm}\mathbf{P}(s'|s,a_{k},a_{-k})v_{-k}(s')
	\end{equation*}
	Note that,
	\begin{eqnarray*}
		&\hspace{-5mm}\sum\limits_{a_{-k} \in \A_{-k}(s)} \frac{\partial\Omega_{-k, \pi_{k}}^{s, a_{-k}} }{\partial\pi_{k}(s,a_{k})}\pi_{-k}(s,a_{-k}) = \sum\limits_{a_{-k} \in \A_{-k}(s)} \big[\rho_{k} + v_{k}(s) - \\&\hspace{-5mm}r_{k}(s,a_{k},a_{-k})  -\sum\limits_{s' \in {\bf S}} \mathbf{P}(s'|s,a_{k},a_{-k})v_{k}(s') \big]\pi_{-k}(s,a_{-k}) = \Omega_{-k, \pi_{-k}}^{s, a_{k}}
	\end{eqnarray*}
	Therefore, 
	$\frac{\partial\Delta(\pi)}{\partial\pi_{k}(s,a_{k})} \hspace{-0.5mm}= \sum\limits_{\bar{k} \in \{D,A\}} \Omega_{\bar{k}, \pi_{-k}}^{s, a_{k}}.$
	This proves the result.
\end{proof}
\end{document}